\DeclarePairedDelimiter{\ceil}{\lceil}{\rceil}
\DeclarePairedDelimiter\floor{\lfloor}{\rfloor}
\newcommand{\lebn}
\theoremstyle{plain}
\newtheorem{proposition}[equation]{Proposition}
\newtheorem{theorem}[equation]{Theorem}
\newtheorem{corollary}[equation]{Corollary}
\newtheorem{lemma}[equation]{Lemma}
\theoremstyle{definition}
\newtheorem{definition}[equation]{Definition}
\newtheorem{remark}[equation]{Remark}
\newtheorem{example}[equation]{Example}
\newtheorem*{thm1}{Theorem \ref{thm:depth}}
\newtheorem*{thm2}{Theorem \ref{thm:col-bound}}
\numberwithin{equation}{section}
\newcommand{\R}{\mathbb{R}}
\newcommand{\N}{\mathbb{N}}
\newcommand{\B}{\mathcal{B}}
\newcommand{\C}{\mathcal{C}}
\newcommand{\M}{\mathcal{M}}
\newcommand{\FE}{\mathcal{F}}
\newcommand{\DE}{\mathcal{D}}
\newcommand{\A}{\mathcal{A}}
\newcommand{\U}{\mathcal{U}}
\newcommand{\V}{\mathcal{V}}
\newcommand{\width}{\operatorname{width}}
\newcommand{\Acy}{\operatorname{Acy}}
\newcommand{\CS}{\operatorname{CS}}
\newcommand{\depth}{\operatorname{depth}}
\newcommand{\D}{\Delta}
\def\moverlay{\mathpalette\mov@rlay}
\def\mov@rlay#1#2{\leavevmode\vtop{%
   \baselineskip\z@skip \lineskiplimit-\maxdimen
   \ialign{\hfil$\m@th#1##$\hfil\cr#2\crcr}}}
\newcommand{\charfusion}[3][\mathord]{
    #1{\ifx#1\mathop\vphantom{#2}\fi
        \mathpalette\mov@rlay{#2\cr#3}
      }
    \ifx#1\mathop\expandafter\displaylimits\fi}
\newcommand{\cupdot}{\charfusion[\mathbin]{\cup}{\cdot}}
\renewcommand{\mod}[1]{\mathrm{mod}\ #1}
\patchcmd\Gread@eps{\@inputcheck#1 }{\@inputcheck"#1"\relax}{}{}
\begin{document}

\bibliographystyle{plain}

\title[Sectionable tournaments]{Sectionable Tournaments: Their topology and Coloring}

\author{Zakir Deniz}

\address{Department of Mathematics, Duzce University, Duzce, 81620, Turkey.}
\email{zakirdeniz@duzce.edu.tr}

\keywords{Tournaments, simplicial complexes, discrete Morse theory, coloring.}
	
\subjclass[2020]{05C20, 05E45, 57Q70, 05C15.}	
	
\date{\today}
\thanks{ }

\begin{abstract}
We provide a detailed study of topological and combinatorial properties of sectionable tournaments. This class forms an inductively constructed family of tournaments grounded over simply disconnected tournaments, those tournaments whose fundamental groups of acyclic complexes are non-trivial. When $T$ is a sectionable tournament, we fully describe the cell-structure of its acyclic complex $\Acy(T)$ by using the adapted machinery of discrete Morse theory for acyclic complexes of tournaments.  In the combinatorial side, we demonstrate that the dimension of the complex $\Acy(T)$ has a role to play. We prove that
if $T$ is a $(2r+1)$-sectionable tournament and $d$ is the dimension of $\Acy(T)$, then the (acyclic) chromatic number of $T$ satisfies $\chi(T)\leq 2 \left( 2-1/(r+1) \right)^{\log(d+1)}-1$ where the logarithm has two as its base. 

\end{abstract}
\maketitle
\section{Introduction}\label{intro}

Topological methods and techniques have shed new lights on various combinatorial problems. In particular, the notion of topological connectivity of simplicial complexes associated to graphs plays a crucial role on either bounding or even exact determination of graph's invariants (\cite{kozlov}). The choice for the associated complex may vary depending on the nature of concerned problems. Unlike the undirected graphs,  problems surrounding directed graphs have not benefited much from such a rich interconnection.  As in the undirected case, there exists a natural way to associate a simplicial complex to a directed graph. Recall that a subset $S$ of a directed graph $G$ is said to be \emph{acyclic} (\emph{transitive}) if the induced directed subgraph contains no dicycle, and the family of all acyclic sets in $G$ forms a simplicial complex $\Acy(G)$, the \emph{acyclic complex} of $G$.\medskip

The main purpose of our current work is to demonstrate that the family of acyclic complexes of directed graphs is a good choice for the investigation of a related coloring problem as well as decycling directed graphs, at least on the class of tournaments. Among all, the topology of acyclic complexes of tournaments has already received much attention. In particular,
Burzio and Demaria~\cite{MB1} have completely characterized those tournaments whose fundamental groups of acyclic complexes are non-trivial.  They choose to call them as \emph{simply disconnected tournaments}, and their characterization relies on a natural decomposition of such tournaments. In detail, if $T$ is a simply disconnected tournament, then it can be written in the form  $T\cong R_m(P^1,\ldots,P^m)$ for some tournaments $P^1,\ldots,P^m$, where $R_m$ is a highly regular tournament on $m$ vertices for some odd integer $m\geq 3$.  
In such a decomposition, we call $P^1,\ldots,P^m$ as the \emph{blocks} of $T$.  Notice that when $P^i$ is a block, its vertices have the same (out/in)-neighbors outside of $P^i$, i.e., if $u\in V(P^i)$, then $uv\in E(T)$ for some $v\in V(T-P^i)$ if and only if $wv\in E(T)$ for every $w\in V(P^i)\setminus \{u\}$.  We inductively introduce a subfamily of tournaments by declaring that a tournament $T$ is called $m$-\emph{sectionable} if either it is itself a transitive tournament or else it is a simply disconnected tournament whose each block is a $t$-sectionable tournament for some $t\leq m$.\medskip 

In the case when $m=3$, we have already displayed in~\cite{zd}, the strong connection between the topology of acyclic complexes of $3$-sectionable tournaments and their acyclic chromatic numbers. Recall that a vertex coloring of a directed graph $G$ is acyclic if each color class induces an acyclic set, and the least integer $k$ for which $G$ admits an acyclic coloring with $k$ colors is the \emph{acyclic chromatic number} of $G$, denoted by $\chi(G)$.
We proved in~\cite{zd} that the acyclic complex of any $3$-sectionable tournament $T$ is homotopy equivalent to a wedge of spheres. In particular, the chromatic number of every $3$-sectionable tournament $T$ can be (tightly) bounded from above in terms of the highest dimension of a sphere occurring in such a decomposition.\medskip 

We will show here that there is still a chance to bound or determine some combinatorial invariants of $m$-sectionable tournaments when $m\geq 3$, even if the methods developed in~\cite{zd} are no longer sufficient. For topological calculations, we apply to a more stronger technique, namely discrete Morse theory of Forman~\cite{forman98} in order to describe fully the cell structure of $\Acy(T)$. As opposed to $3$-sectionable tournaments, the complex $\Acy(T)$ is not known to be homotopy equivalent to a wedge of spheres when $m>3$. Instead, we formulate the highest dimension of a critical cell (called as the \emph{depth} of $T$) occurring with respect to an acyclic matching of $\Acy(T)$ by using the inductive nature of sectionable tournaments. 

\begin{thm1}
Let $T=R_m(P^1,P^2,\ldots,P^m)$ be a sectionable tournament with $\depth(P^i)=d_i$, $m=2r+1$. Then,
\[ 
\depth(T)=\max_{{ i \in \{1,\ldots,m\}}} \{1,d_i+d_{i+1}+\ldots+d_{i+r}+r: \textnormal{either $d_{i},\ldots,d_{i+r-1}$  or  $d_{i+1},\ldots,d_{i+r}$ are non-zero} \}
\]
where the index of $d_{j}$ is taken modulo $m$ for $1\leq j \leq 2r+1$.
\end{thm1}

Unfortunately, the parameter $\depth(T)$ is not the right invariant to bound the chromatic number of sectionable tournaments when $m>3$. In general, we verify that the dimension $\dim(T)$ of the complex $\Acy(T)$ takes its role:

\begin{thm2}
If $T=R_m(P^1,P^2,\ldots,P^m)$ is an $m$-sectionable tournament with $m=2r+1$, then  
$$\chi(T)\leq 2 \left( 2-\frac{1}{r+1} \right)^{\log(\dim(T)+1)}-1,$$
where the logarithm has two as its base.
\end{thm2}

We further show that the gap between $\dim(T)$ and $\depth(T)$ could be arbitrarily large, whereas they coincide when $m=3$. In fact, we completely characterize sectionable tournaments for which the equality $\dim(T)=\depth(T)$ holds. \medskip



\section{Preliminaries}\label{section:prel}
We first recall some general notions and notations needed throughout the paper, and repeat some of the definitions mentioned in the introduction more formally.
\medskip

A \emph{tournament} $T$ is a (finite) directed graph on a set $V=V(T)$ such that either $uv$ or $vu$ is an edge (but not both) of $T$ 
for each pair of distinct vertices $u$ and $v$. If $S\subseteq V$, the tournament induced by $S$ is written $T[S]$. 
For a given subset $S\subseteq V$, the sets $N^+(S):=\{v\in V\colon uv\;\textrm{for\;some}\;u\in S\}$ and
$N^-(S):=\{w\in V\colon wu\;\textrm{for\;some}\;u\in S\}$ are called the set of
\emph{out-neighbors} and \emph{in-neighbors} of $S$ in $T$ respectively. Furthermore, $d^+(x):=|N^+(x)|$ and $d^-(x):=|N^-(x)|$ are  called \emph{out-degree} and \emph{in-degree} of the vertex $x$.
\medskip

A tournament $T$ is said to be \emph{regular} if $d^+(v)=d^-(v)$ for any $v\in V$, and an $n$-vertex regular tournament $T_n$ is called \emph{highly-regular} (see \cite{MB2})
if $n=2k+1$ for some $k\geq 1$ and there exists a cyclic ordering $v_1,v_2,\ldots,v_{2k+1}$ of the vertices such that $v_iv_j\in E(T_n)$
for any $i\in [n]$ and $j\equiv i+l$ (mod $n$) for some $l\in [k]$. A highly regular tournament of size $m$ for $m\geq 3$ is denoted by $R_m$. 
\medskip

For each $k\geq 3$, we denote the directed cycle (or \emph{dicycle}) by $C_k$, where it is the directed graph on an ordered set of vertices
$u_1,u_2,\ldots, u_k$ such that $u_iu_{i+1}\in E(C_k)$ in the cyclic fashion, and a subdigraph $C$ of a tournament $T$ is called
a dicycle of $T$ whenever $C\cong C_k$ for some $k\geq 3$. We denote by $R_1$,  the one point tournament.
\medskip

A subset $S\subseteq V$ is said to be an \emph{acyclic} (or \emph{transitive}) set in $T$ if the induced subtournament $T[S]$ contains no dicycle,
and a tournament $T$ is called \emph{transitive} whenever $V$ is itself a transitive set in $T$. A map $\mu\colon V\to [k]$ is said to be a 
$k$-\emph{coloring} of a tournament $T$, if the set $\{v\in V\colon \mu(v)=i\}$ is transitive in $T$ for each $i\in [k]$, and the \emph{chromatic number}
of $T$ is defined to be the least integer $k$ for which $T$ admits a $k$-coloring.\medskip

For any given $A,B\subseteq V$, we write $A \Rightarrow B$ provided that $ab\in E(T)$ for any $a\in A$ and $b\in B$, and
a tournament $T$ is said to be \emph{reducible} if there exist disjoint subsets $A,B\subset V$ satisfying $V=A\cup B$ 
such that either $A\Rightarrow B$ or $B\Rightarrow A$ holds in $T$; 
otherwise it is called \emph{irreducible}.\medskip

A subset $P\subseteq V$ is said to form an \emph{equivalent} set of vertices in $T$, if for any $q\in V\backslash P$, we have either $q\Rightarrow P$ or $P\Rightarrow q$. If the vertices of $T$ can be partitioned into disjoint subtournaments
$P^1, P^2,\ldots,P^m$ such that each $P^i$ has an equivalent set of vertices, 
and $Q_m$ is a  tournament on $w_1,w_2,...,w_m$ in which $w_iw_j\in E(Q_m)$ if and only if $P^i\Rightarrow P^j$, then we write $T=Q_m(P^1, P^2,\ldots,P^m)$ and call $T$ as the \emph{composition} of the components $P^1, P^2,\ldots,P^m$
with the \emph{quotient} $Q_m$. In particular, a tournament $T_n$ is called \emph{simple}, if $T_n=Q_m(P^1, P^2,\ldots,P^m)$ implies that
either $m=1$ or $m=n$. Note that every non-trivial tournament has precisely one non-trivial simple quotient tournament~\cite{DK}.
Furthermore, if $T=Q_m(P^1, P^2,\ldots,P^m)$, then there exists a digraph homomorphism $\Gamma_{T,Q}\colon T\to Q$ defined by
$\Gamma_{T,Q}(P^i)=w_i$ for any $i\in [m]$.
\medskip

Finally, we call a tournament $T$ is \emph{highly decomposable} if its non-trivial simple quotient is highly regular, that is, 
$T=R_m(P^1, P^2,\ldots,P^m)$, where $R_m$ is a highly regular tournament for some $m\geq 3$.

\subsection{Simplicial complexes}
An \emph{(abstract) simplicial complex} $\D$ on a finite set $X$ is a family
of subsets of $X$ satisfying the following properties.
\begin{itemize}
\item[(i)] $\{x\}\in \D$ for all $x\in X$.
\item[(ii)] If $F\in \D$ and $H\subset F$, then $H\in \D$.
\end{itemize} 

The elements of $\D$ are called \emph{faces}; the \emph{dimension}
of a face $F$ is $\textrm{dim}(F):=|F|-1$, and the \emph{dimension} of $\D$ 
is defined to be $\textrm{dim}(\D):=\textrm{max}\{\textrm{dim}(F)\colon F\in \D\}$. The $0$ and 
$1$-dimensional faces of $\D$ are called \emph{vertices} and \emph{edges} while
maximal faces are called \emph{facets}, and the set of facets of $\D$ is denoted by $\FE(\D)$.
A subset $C$ of $X$ is called a \emph{circuit} of $\D$ if $C$ is a minimal non-face of $\D$.

The simplicial \emph{join} of two complexes $\D_1$ and $\D_2$ on disjoint sets of vertices, denoted by $\D_1 * \D_2$, is the simplicial complex defined by 
$$\D_1\ast \D_2=\{\sigma \cup \tau \; \; \vert \: \: \sigma \in \D_1, \tau \in \D_2 \}$$

The following can be easily obtained by the definition of join operation on simplicial complexes.
\begin{corollary}\label{cor:join-simplical}
$\dim(\D_1*\D_2)=\dim(\D_1)+\dim(\D_2)+1$ for simplicial complexes $\D_1$ and $\D_2$.
\end{corollary}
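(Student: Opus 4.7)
The plan is to verify both inequalities using only the definition of the join together with the fact that the vertex sets of $\D_1$ and $\D_2$ are assumed disjoint. First I would fix facets $F_1 \in \FE(\D_1)$ and $F_2 \in \FE(\D_2)$ that realize the dimensions of $\D_1$ and $\D_2$, respectively, so that $|F_1| = \dim(\D_1)+1$ and $|F_2| = \dim(\D_2)+1$. Since $V(\D_1) \cap V(\D_2) = \emptyset$, the union $F_1 \cup F_2$ is a disjoint union and lies in $\D_1 \ast \D_2$ by definition, giving a face of dimension
\[
|F_1 \cup F_2| - 1 = |F_1| + |F_2| - 1 = \dim(\D_1) + \dim(\D_2) + 1.
\]
This establishes the lower bound $\dim(\D_1 \ast \D_2) \geq \dim(\D_1) + \dim(\D_2) + 1$.

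For the reverse inequality, I would take an arbitrary face $F \in \D_1 \ast \D_2$ and write $F = \sigma \cup \tau$ with $\sigma \in \D_1$ and $\tau \in \D_2$. Again using disjointness of the vertex sets,
\[
\dim(F) = |\sigma| + |\tau| - 1 \leq (\dim(\D_1)+1) + (\dim(\D_2)+1) - 1 = \dim(\D_1) + \dim(\D_2) + 1,
\]
so the maximum dimension of a face of the join cannot exceed this quantity. Combining both bounds yields the claimed equality.

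The only potential subtlety is bookkeeping around the empty face: one must allow $\sigma = \emptyset$ or $\tau = \emptyset$ in the definition of the join so that $\D_1$ and $\D_2$ embed as subcomplexes, but this does not affect the argument since the extremal face used in the lower bound takes both summands to be nonempty facets. There is no real obstacle here — the statement follows directly from the definition, and the proof is essentially a one-line counting argument in disguise.
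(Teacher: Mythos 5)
Your proof is correct and matches the paper's intent: the paper offers no explicit argument, simply asserting the identity "can be easily obtained by the definition of join," and your two-inequality counting argument (using disjointness of the vertex sets to get $|\sigma\cup\tau|=|\sigma|+|\tau|$) is exactly that routine verification. Nothing further is needed.
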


A simplicial complex is said to be \emph{simply connected} if its fundamental group is trivial, otherwise, it is called \emph{simply disconnected}. We denote by $S^j$, the sphere of dimension $j$.

Let $\D$ be a simplicial complex on the vertex set $V$, and consider $x,y\in V$ such that $\{x,y\}\in \D$ 
and $u_{xy}\notin V$. The \emph{edge contraction} $xy\mapsto u_{xy}$ can be defined to be a map $f\colon V\to (V\backslash \{x,y\})\cup \{u_{xy}\}$ by
\begin{equation*}
f(v):=\begin{cases}
v, & \textrm{if}\;v\notin \{x,y\},\\
u_{xy}, & \textrm{if}\;v\in \{x,y\}.
\end{cases}
\end{equation*} 
We then extend $f$ to all simplices $F=\{v_0,v_1,\ldots,v_k\}$ of $\D$ by setting $$f(F):=\{f(v_0),f(v_1),\ldots,f(v_k)\}.$$
The simplicial complex $\D\sslash xy:=\{f(F)\colon F\in \D\}$ is called the contraction of $\D$ with respect to the edge $\{x,y\}$.

Observe that the contraction of an edge may not need to preserve the homotopy type of the complex in general. However, under a suitable restriction on the contracted edge, we can guarantee this to happen. 

\begin{theorem}\textnormal{\cite[Theorem 2.4]{EH}, \cite[Theorem 1]{ALS}}\label{thm:hmtpy-edge}
Let $\D$ be a simplicial complex and let $\{x,y\}\in \D$ be an edge. Then, the simplicial complexes $\D$ and $\D\sslash xy$ are homotopy equivalent
provided that no minimal non-face of $\D$ contains the edge $\{x,y\}$. 
\end{theorem}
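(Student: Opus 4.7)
The plan is to prove the theorem by constructing an auxiliary complex $\D^*$ that interpolates between $\D$ and $\D\sslash xy$: both $\D$ and $\D\sslash xy$ will embed as subcomplexes of $\D^*$ whose inclusions are homotopy equivalences, and the link condition will enter precisely in the more delicate of the two.

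First, I would reformulate the hypothesis as the \emph{link condition} $\operatorname{lk}_\D(x)\cap \operatorname{lk}_\D(y)=\operatorname{lk}_\D(\{x,y\})$. The inclusion $\supseteq$ is automatic. For $\subseteq$, if $\tau\subseteq V\setminus\{x,y\}$ satisfies $\tau\cup\{x\},\tau\cup\{y\}\in\D$ but $\tau\cup\{x,y\}\notin\D$, then any minimal non-face $\sigma\subseteq\tau\cup\{x,y\}$ must contain both $x$ and $y$, contradicting the hypothesis.

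Next, I would introduce a new vertex $u$, set $S:=\overline{\operatorname{st}}_\D(x)\cup\overline{\operatorname{st}}_\D(y)$, and define
\[
\D^*=\D\cup\{\sigma\cup\{u\} : \sigma\in S\}.
\]
A routine check shows $\D^*$ is a simplicial complex and that its full subcomplex on $V(\D^*)\setminus\{x,y\}$ is isomorphic to $\D\sslash xy$ via $u\leftrightarrow u_{xy}$. Moreover, every facet of $\D^*$ containing $x$ is of the form $\sigma\cup\{u\}$ with $x\in\sigma\in\overline{\operatorname{st}}_\D(x)$, so such a facet necessarily contains $u$; hence $x$ is \emph{dominated} by $u$ in $\D^*$, and similarly $y$ is dominated by $u$ in $\D^*\setminus\{x\}$. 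Invoking the classical fact that removing a dominated vertex preserves homotopy type (strong collapse), iterated vertex deletions yield $|\D^*|\simeq|\D\sslash xy|$, and this step does not use the hypothesis on minimal non-faces.

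The remaining task, and the main obstacle, is to prove the equivalence $|\D|\simeq|\D^*|$. I would observe that $|\D^*|=|\D|\cup C$, where $C=|\{u\}\ast S|$ is the topological cone and $|\D|\cap C=|S|$, so $|\D^*|$ is the mapping cone of the inclusion $|S|\hookrightarrow|\D|$ and is therefore homotopy equivalent to $|\D|/|S|$. To finish, I would show that $|S|$ is contractible, which is precisely where the link condition enters: each closed star $|\overline{\operatorname{st}}_\D(x)|$ and $|\overline{\operatorname{st}}_\D(y)|$ is a cone, hence contractible, and under the link condition their intersection equals $|\overline{\operatorname{st}}_\D(\{x,y\})|$, a simplicial join with a $1$-simplex, which is also contractible. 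The union of two contractible subcomplexes with contractible intersection is contractible (by Mayer--Vietoris and van Kampen), giving $|\D|\simeq|\D|/|S|\simeq|\D^*|$; composing with the previous equivalence yields $|\D|\simeq|\D\sslash xy|$. Without the link condition, the intersection of the two closed stars can be strictly larger than $\overline{\operatorname{st}}_\D(\{x,y\})$ and $|S|$ need not be contractible (as with $\D=\partial\Delta^2$, where $|S|\simeq S^1$), which is exactly why the hypothesis cannot be weakened.
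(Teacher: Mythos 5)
The paper itself supplies no proof of this theorem: it is imported as a quoted result from \cite[Theorem 2.4]{EH} and \cite[Theorem 1]{ALS}, so there is no internal argument to compare yours against. Judged on its own, your proof is correct and complete. The translation of the hypothesis ``no minimal non-face contains $\{x,y\}$'' into the link condition $\operatorname{lk}_\D(x)\cap\operatorname{lk}_\D(y)=\operatorname{lk}_\D(\{x,y\})$ is argued properly; the auxiliary complex $\D^*$ is a genuine simplicial complex whose full subcomplex off $\{x,y\}$ is $\D\sslash xy$; the domination of $x$ by $u$ in $\D^*$ and of $y$ by $u$ in $\D^*\setminus\{x\}$ are both immediate from maximality, so the strong-collapse half needs no hypothesis, exactly as you say; and the other half correctly isolates where the hypothesis is used, namely that $\overline{\operatorname{st}}_\D(x)\cap\overline{\operatorname{st}}_\D(y)=\overline{\operatorname{st}}_\D(\{x,y\})$ is a cone, so that $S$ is a union of two contractible subcomplexes with nonempty contractible intersection and is therefore contractible, whence $|\D|\hookrightarrow|\D^*|=|\D|\cup_{|S|}C|S|$ is a homotopy equivalence by the standard cofibration/gluing argument. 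This is essentially the strong-collapse viewpoint that underlies the proof in \cite{ALS} (and is close to Barmak--Minian's treatment of dominated vertices), packaged through a single interpolating complex; what it buys over simply citing the references is a self-contained argument plus the sharp observation, via $\partial\Delta^2$, that the hypothesis cannot be dropped. The only points worth stating explicitly if this were written out in full are the two background facts you invoke in passing: that deleting a dominated vertex preserves homotopy type, and that a simply connected CW-complex with vanishing reduced homology is contractible; both are standard and correctly applied.
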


\subsection{Discrete Morse Theory}
We close this section by a short incomplete introduction to discrete Morse theory of Forman~\cite{forman98}, and refer readers to~\cite{kozlov} for more details. 

For a simplicial complex $\D$, we can associate to it a poset $P(\D)$ called the face poset of $\D$, which is the set of faces of $\D$ ordered by inclusion.  We regard any  finite poset $P(\D)$ as a directed graph, by considering the Hasse diagram of $P(\D)$ with edges pointing downward (that is, from large faces to smaller ones). A set $M$ of pairwise disjoint edges of this digraph, where no face is contained in more than one pair in $M$, is called a \emph{matching} of $P(\D)$. A face is \emph{unmatched} (or \emph{critical}) if it is not matched with any element in $P(\D)$. A matching is called \emph{perfect} if it covers all elements of $P(\D)$, and a matching is said to be a \emph{Morse matching} (or an \emph{acyclic matching}) if the directed
graph obtained from $P(\D)$ by reversing the direction of the edges in $M$
is acyclic. We need the following classical results of discrete Morse theory \cite{forman02}.

\begin{theorem}\label{thm:Forman}
Let $\Sigma$ be a finite simplicial complex, seen as a poset,
and $M$ a Morse matching on $\Sigma$,
such that the element $\emptyset$ of $\Sigma$ is matched. For $i\ge 0$, let $n_i$ be
the number of unmatched $i$-dimensional elements of $\Sigma$.
Then there exists a CW-complex having $1+n_0$ $0$-dimensional cells and $n_i$
$i$-dimensional cells for $i\ge 1$ that is homotopy equivalent to $\Sigma$.
\end{theorem}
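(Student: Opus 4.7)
The plan is to follow Forman's inductive collapse strategy: use the acyclicity of $M$ to process matched pairs one at a time via elementary collapses, and handle the distinguished pair $(\emptyset, v)$ separately to account for the $+1$ in the $0$-cell count.

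First, I would use acyclicity to linearly order the matched pairs $(\sigma_i,\tau_i)$ with $\sigma_i \neq \emptyset$. Reversing the $M$-edges of the Hasse diagram of $P(\Sigma)$ produces a directed acyclic graph by hypothesis, so a topological sort yields an order in which, after removing previously processed pairs, $\tau_i$ is a maximal face of the current complex and $\sigma_i$ is a free face of $\tau_i$. Deleting $\{\sigma_i,\tau_i\}$ is then an elementary collapse, a classical deformation retract preserving homotopy type. Composing these collapses would produce a homotopy equivalence from $\Sigma$ onto a reduced complex whose structure is governed entirely by the critical cells and the surviving pair $(\emptyset,v)$.

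Next, I would realize this reduced complex as a CW-complex: the vertex $v$ together with the $n_0$ unmatched $0$-faces yields $1+n_0$ zero-cells, while for $i\ge 1$ the $n_i$ unmatched $i$-faces give exactly $n_i$ cells of dimension $i$. The attaching maps come from the discrete Morse flow, i.e., the $V$-paths connecting critical cells, following the construction in \cite{forman02}. Composition with the earlier collapses then delivers the desired homotopy equivalence.

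The main obstacle is verifying that the topological sort genuinely produces pairs admitting valid elementary collapses at every stage — equivalently, translating the combinatorial acyclicity of $M$ into the geometric condition that $\sigma_i$ be a free face of $\tau_i$ in the complex remaining after the previous collapses. This is the heart of Forman's argument and the reason acyclicity (rather than merely matching) is the essential hypothesis: without it, matched pairs could mutually obstruct each other's removal, and the collapse sequence would stall before reaching the critical skeleton.
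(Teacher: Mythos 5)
The paper offers no proof of this statement; it is quoted as a classical result of Forman and attributed to \cite{forman02}, so the only question is whether your argument stands on its own. It does not quite: the central step, removing each matched pair $(\sigma_i,\tau_i)$ by an elementary collapse after a topological sort, fails whenever $\sigma_i$ is a face of a critical cell. A critical cell is never removed, so $\sigma_i$ can never become a free face of $\tau_i$, no matter how the pairs are ordered. Concretely, take $\Sigma$ to be the full $2$-simplex on $\{a,b,c\}$ with the Morse matching $\emptyset\leftrightarrow a$, $b\leftrightarrow ab$, $c\leftrightarrow ac$, leaving $bc$ and $abc$ critical. The theorem applies (one $0$-cell, one $1$-cell, one $2$-cell, consistent with contractibility), but neither matched pair admits an elementary collapse at any stage, since $b$ and $c$ each lie below the critical cells $bc$ and $abc$. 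Your collapse sequence stalls immediately, and the "reduced complex governed by the critical cells" you want to retract onto is not a subcomplex of $\Sigma$ at all. You flag this as "the main obstacle," but it is not a verification gap to be closed; the architecture itself (collapse everything first, then read off a CW-structure on what remains) is unavailable.

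The standard proof proceeds differently: one orders the faces by a linear extension compatible with the matching (equivalently, by level sets of a discrete Morse function) and builds an \emph{abstract} CW-complex inductively, showing that passing a matched pair leaves the homotopy type of the partial construction unchanged (a deformation retraction argument at the level of the attaching data, not an elementary collapse inside $\Sigma$), while passing a critical $i$-cell attaches one $i$-cell along a map determined by the gradient paths. The matched pair $(\emptyset,v)$ contributes the base point, which is where the $1+n_0$ count comes from; that part of your sketch is fine. To repair your write-up you would need to replace the global collapse-then-read-off scheme with this level-by-level construction, or else invoke the theorem as the paper does.
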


\begin{corollary}\label{cor:Forman}
Under the assumptions of Theorem~{\rm\ref{thm:Forman}}, if all
unmatched elements in $\Sigma$ have the same dimension $i>0$  and there
are $j$ of them,
 $\Sigma$ is homotopy equivalent to a wedge of $j$  spheres of dimension $i$.
In particular, if $\Sigma$ is perfectly matched, then it is contractible.
\end{corollary}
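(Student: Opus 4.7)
The plan is to derive this corollary directly from Theorem~\ref{thm:Forman}. First, I would translate the hypothesis into information about the cell counts $n_k$ appearing in that theorem: since every unmatched element of $\Sigma$ has dimension exactly $i>0$, we have $n_i=j$ and $n_k=0$ for all $k\neq i$. In particular, all $0$-dimensional elements of $\Sigma$ are matched, so $n_0=0$.

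Next, I would invoke Theorem~\ref{thm:Forman} with these values. It produces a CW-complex $X$ homotopy equivalent to $\Sigma$ consisting of $1+n_0=1$ zero-cell and $n_i=j$ cells of dimension $i$, with no cells in any other positive dimension. Thus $X$ is obtained from a single point by attaching $j$ cells of dimension $i$; since every attaching map $S^{i-1}\to\{\mathrm{pt}\}$ is necessarily constant, each such attachment produces a sphere $S^i$, and attaching $j$ of them at a common basepoint yields the wedge $\bigvee_{k=1}^{j} S^i$. Hence $\Sigma\simeq \bigvee_{k=1}^{j} S^i$, as claimed.

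For the ``in particular'' clause, if $\Sigma$ is perfectly matched then there are no unmatched elements at all, so the hypothesis holds vacuously with $j=0$ (and in fact $n_k=0$ for every $k$, including $k=0$). Applying Theorem~\ref{thm:Forman} again yields a CW-complex with a single $0$-cell and no higher-dimensional cells, i.e.\ a point, and therefore $\Sigma$ is contractible.

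There is no real obstacle here beyond carefully bookkeeping the cell counts; the only subtle point worth emphasizing in the write-up is that the $+1$ in the count $1+n_0$ of Theorem~\ref{thm:Forman} guarantees a basepoint even when $n_0=0$, which is precisely what lets the $j$ top-dimensional cells assemble into a wedge rather than a disjoint union.
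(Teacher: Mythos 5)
Your derivation is correct and is exactly the intended one: the paper states this corollary as a classical consequence of Theorem~\ref{thm:Forman} (citing Forman) without writing out a proof, and your bookkeeping of the cell counts, the observation that the $(i-1)$-skeleton of the resulting CW-complex is the single basepoint so all attaching maps are constant, and the vacuous case $j=0$ for contractibility constitute the standard argument. Nothing is missing.
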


\begin{lemma}\label{lem:morse}
Let $\D_0$ and $\D_1$ be disjoint families of subsets of a finite set such that $\tau \not \subseteq \sigma$ if $\sigma \in \D_0$ and $\tau \in \D_1$. If $M_i$ is an acyclic matching on $\D_i$ for $i=0,1,$ then $M_0 \cup M_1$ is an acyclic matching on $\D_0 \cup \D_1$.
\end{lemma}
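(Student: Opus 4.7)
The plan is to work directly with the digraph $G$ on $\D_0 \cup \D_1$ obtained from the Hasse diagram (arrows pointing from each face down to faces of one dimension smaller) by reversing every arc belonging to $M_0 \cup M_1$. I need to verify two things: first, that $M_0 \cup M_1$ is a valid matching on the Hasse diagram of $\D_0 \cup \D_1$; and second, that $G$ contains no directed cycle.

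The matching condition is essentially bookkeeping. Since $\D_0 \cap \D_1 = \emptyset$, no face belongs to pairs from both $M_0$ and $M_1$; and each matched pair $(\sigma,\tau) \in M_i$ consists of elements of $\D_i$ with $\tau \subsetneq \sigma$ and $|\sigma \setminus \tau|=1$, so it remains a covering pair, and therefore a Hasse edge, inside the enlarged poset $\D_0 \cup \D_1$.

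The key observation is that the hypothesis forces $\D_0$ to be \emph{absorbing} in $G$: every arc of $G$ out of a vertex of $\D_0$ lands again in $\D_0$. Indeed, take $\sigma \in \D_0$ and consider an arc out of $\sigma$. If it is an unmatched downward arc $\sigma \to \tau$, then $\tau \subsetneq \sigma$ with $\tau \in \D_0 \cup \D_1$, and the assumption $\tau \not\subseteq \sigma$ for $\sigma \in \D_0$ and $\tau \in \D_1$ forces $\tau \in \D_0$. If instead it is a reversed matched arc $\sigma \to \sigma'$ coming from a pair $(\sigma',\sigma) \in M_0 \cup M_1$ in which $\sigma$ is the smaller element, then, since $\sigma \in \D_0$ and neither $M_0$ nor $M_1$ contains pairs with endpoints in distinct $\D_i$, the pair must lie in $M_0$, giving $\sigma' \in \D_0$ as well.

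Suppose now, for contradiction, that $G$ admits a directed cycle $C$. If $C$ meets $\D_0$, the absorbing property forces $C$ to lie entirely in $\D_0$, producing a directed cycle in the reversed-Hasse digraph of $\D_0$ associated with $M_0$, which contradicts the acyclicity of $M_0$. Otherwise $C$ lies entirely in $\D_1$, contradicting the acyclicity of $M_1$. The only delicate point — and the closest thing to an obstacle — is the covering-relation bookkeeping for the matching condition; in the simplicial, or more generally Boolean-lattice, setting in which covers are precisely single-element differences, this is automatic, and I do not expect any deeper difficulty.
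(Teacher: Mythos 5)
The paper does not actually prove this lemma: it is quoted as a classical fact of discrete Morse theory (it is the two-piece case of the standard ``cluster lemma,'' cf.\ Jonsson's book and the Bousquet-M\'elou--Linusson--Nevo paper cited there), so there is no in-paper argument to compare yours against. Your proof is correct and is essentially the standard one: the hypothesis that no $\tau\in\D_1$ is contained in any $\sigma\in\D_0$ makes $\D_0$ absorbing in the modified Hasse digraph, so any directed cycle is trapped in a single $\D_i$ and contradicts the acyclicity of $M_i$. The one point you flag as ``delicate'' -- whether matched pairs remain covering pairs in the enlarged poset -- in fact needs no appeal to the Boolean-lattice setting: it follows directly from the lemma's hypothesis. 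If $(\tau,\sigma)\in M_0$ with $\tau\lessdot\sigma$ in $\D_0$, any intermediate $\rho$ with $\tau\subsetneq\rho\subsetneq\sigma$ and $\rho\in\D_1$ would satisfy $\rho\subseteq\sigma$ with $\sigma\in\D_0$, which is forbidden; symmetrically, for $(\tau,\sigma)\in M_1$ an intermediate $\rho\in\D_0$ would satisfy $\tau\subseteq\rho$ with $\tau\in\D_1$, again forbidden. So the Hasse diagram of $\D_0\cup\D_1$ restricted to either piece coincides exactly with the Hasse diagram of that piece, and the same observation shows that the downward arcs of your cycle $C$, once $C$ is known to live in one $\D_i$, really are arcs of the digraph on which $M_i$ was assumed acyclic. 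With that remark added, your argument is complete and works at the level of generality the paper needs (arbitrary families of subsets, not just subcomplexes).
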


\section{Discrete Morse theory for Acyclic complexes of tournaments}

In this section, we adapt the language of discrete Morse theory for its use in describing the CW-complex structure of acyclic complexes of sectionable tournaments. Our adaptation mainly follows the spirit of results presented in \cite{bousquet,taylan}.\medskip

We begin with outlining the general principle that we use to construct the Morse
matchings for acyclic complex $\Acy(T)$ of a tournament $T$ with a vertex set $V$.  Pick a vertex $p \in V(T)$ and define $$\DE(p)=\{ I \in \Acy(T): \ \exists \ \{x,y\} \subseteq I  \ \text{such that} \ T[p,x,y] \cong {R_3} \}$$
and 
$$
\Delta_p= \{ I \in \Acy(T): \ \text{if} \ H \in \DE(p), \text{ then} \ H \not \subseteq I \}.
$$
It can be easily observed that $\Delta_p$ is equal to $\Acy(T)\setminus \DE(p)$.
The set of pairs $(I,I\cup\{p\})$, for $I\in \D_p,$ and $p\not \in I$, forms a perfect matching of $\D_p$, and hence it is a matching of $\Acy(T)$. The vertex $p$ is called \emph{pivot} of this matching. Note that $\DE(p)$ is exactly the set of  unmatched elements of $\Acy(T)$. There may be many unmatched elements, but we can now choose
another pivot $p'$  to match some elements
of $\DE(p)=\Acy(T)\setminus \D_p$, and we repeat this operation as long as we
can. Note that, the obtained matchings are always acyclic on the set $\Acy(T)$ by Lemma \ref{lem:morse}.

\begin{definition} 
Let $\A=\{\gamma_1,\gamma_2,\ldots,\gamma_k\}$ and $\B=\{\beta_1,\beta_2,\ldots,\beta_r\}$ be two families of sets. We define \emph{dot-join operation} of the families $\A$ and $\B$ as follows:
$$ \A \cupdot \B = \left\lbrace \gamma_i \cup \beta_j \ : \ \gamma_i \in \A, \, \beta_j \in \B \right\rbrace$$
In particular, $\A \cupdot \B=\emptyset$ whenever $\A=\emptyset$ or $\B=\emptyset$.  We call two families $\A$ and $\B$ to be \emph{equivalent}, denoted by $\A\equiv \B$, whenever
they contain the same number of sets of size $k$ for each $k\geq 0$.
\end{definition}

As an example, if we consider the families $\A=\{\{1,2\},\{2,3\}\}$ and $\B=\{\emptyset,\{3\},\{1,4\}\}$, then
$ \A \cupdot \B = \left\lbrace \{1,2\},\{2,3\}, \{1,2,3\}, \{1,2,4\},  \{1,2,3,4\} \right\rbrace $.\medskip

Notice that if $\A, \B$ and  $\C$ are families of sets, then  $\A\cupdot (\B\cup \C)=(\A\cupdot \B)\cup (\A \cupdot \C)$ holds.\medskip

\begin{definition} 
Let $T$ be a tournament and $\Acy(T)$ be its acyclic complex. We define, $$\Sigma(A:B)_{T}=\{ I \in \Acy(T): \ A \subseteq I \text{ and } B \cap I=\emptyset  \}$$
where $A$ and $B$ are two subsets of $V(T)$ such that $A \cap B=\emptyset$. In particular,  we will use $\Sigma(A:B)$ instead of $\Sigma(A:B)_{T}$ if it is clear in the context. Observe also that  $\Sigma(\emptyset:\emptyset)\equiv \Acy(T)$.
\end{definition}

We may restate the definition of $\Sigma(A:B)_{T}$ when $A,B$ are some collections of faces in $\Acy(T)$. Let $\alpha,\beta_1,\beta_2,\ldots,\beta_k \in \Acy(T)$ and $\alpha\cap \beta_i =\emptyset$ for $i\in [k]$. We define,
$$\Sigma(\alpha  :  \beta_1,\beta_2,\ldots,\beta_k)_{T}=\{ I \in \Acy(T): \ \alpha \subseteq I \text{ and } \beta_j \nsubseteq I \text{ for } 1\leq j\leq k \}$$

Also, if a face $\beta_i\in \Acy(T)$  consists of a unique vertex, say $\beta_i=\{v\}$, then we use $\{v\}$ without brackets in the expression  $\Sigma(\alpha  :  \beta_1,\beta_2,\ldots,\beta_k)_{T}$. 


In particular, for a tournament $T$ and a  vertex $v\in V(T)$, we write $\DE(v)=\langle\beta_1,\beta_2,\ldots,\beta_k\rangle_T$ for an ordering $\beta_1,\beta_2,\ldots,\beta_k \in \Acy(T)$ 
if $\DE(v)$ is the family of simplices generated by the faces $\beta_1,\beta_2,\ldots,\beta_k \in \Acy(T)$.
Namely, if  $\DE(v)=\Sigma(\beta_1:v)_T\cup\Sigma(\beta_2:v,\beta_1)_T\cup \cdots \cup\Sigma(\beta_k:v,\beta_1,\ldots,\beta_{k-1})_T$, we write  $\DE(v)=\langle\beta_1,\beta_2,\ldots,\beta_k\rangle_T$.

Given  a family $\A$ of sets  and an acyclic matching $\M$ via the sequence of pivots $(p_1,p_2,\ldots,p_k)$, we denote by $\CS(\A,\M)$ (or $\CS(\A)$ for short) the family of unmatched (critical) elements of $\A$ with respect to $\M$.

\begin{example}\label{ex:t5}
Consider the tournament $T$ depicted in the Figure \ref{fig.ex-acy}.
We start with $p_1=8$ as the first pivot, and let $\M_1$ be the set of matched pairs with the pivot $p_1$. Then, the set of unmatched elements is $$\DE(p_1)=\langle 67,29,12,13,14,15 \rangle$$ 
in which we abbreviate $\{a,b\}$ to $ab$ for simplicity. Clearly, $\DE(p_1)$ is the union of the following sets.
\begin{align*}
\Sigma(67:8)& =\{67,167,267,367,467,567,679,3679,4679,5679,1679,2367,2467 \\
 & \qquad 2567,3567,3467,4567,35679,34679,45679,23467,23567,24567 \} \\
\Sigma(92:8,67)&= \{ 29,129 \} \\
\Sigma(12:8,67,92)&= \{12,123,124,125,1234,1235,1245 \} \\ 
\Sigma(13:8,67,92,12)&= \{ 13,134,135 \} \\
\Sigma(14:8,67,92,12,13)&= \{ 14,145 \} \\
\Sigma(15:8,67,92,12,13,14)&= \{ 15 \} 
\end{align*}
We next select another pivot $p_2=9$ for the remaining unmatched elements. The set of matched elements via the pivot $p_2$ (forming a matching, say $\M_2$) is as follows. 
$$\{12,129,67,679,167,1679,367,3679,467,4679,567,
5679,3567,35679,3467,34679,4567,45679 \}.$$
We may continue by choosing pivots $p_3=3,\ p_4=2$  consecutively for the unmatched elements, and construct at each round a matching $\M_i$ for $3\leq i\leq 4$. After those iterations, the matched elements are  
\begin{align*}
&\{267,2367, 2467, 23467, 2567,23567, 14,134,15,135, 124,1234,125,1235 \} \textnormal{ and }\\
& \{13,123,145,1245 \},
\end{align*}
respectively.\medskip

Consequently, the only unmatched elements left are $29$ and $24567$. By Lemma \ref{lem:morse}, $\M=\M_1\cup \M_2 \cup \M_3\cup \M_4$ is acyclic. Therefore, $\CS(\Acy(T),\M)\equiv \{\{2,9\},\{2,4,5,6,7\}\}$  so that $\Acy(T)$
is homotopy equivalent to a CW-complex with one cell in each dimensions $0, 1$ and $4$.
\end{example}

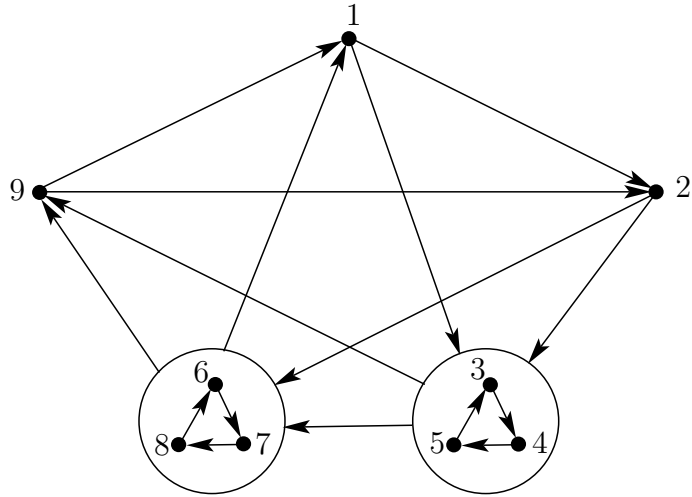
\begin{figure}[h!]
\begin{center}
\psscalebox{1.0 1.0} 
{
\begin{pspicture}(0,-3.2161944)(8.909109,3.2161944)
\pscircle[linecolor=black, linewidth=0.02, dimen=outer](2.6611335,-2.2561944){0.96}
\pscircle[linecolor=black, linewidth=0.02, dimen=outer](6.2611337,-2.2561944){0.96}
\psdots[linecolor=black, dotsize=0.2](0.39621556,0.77724826)
\psdots[linecolor=black, dotsize=0.2](8.501134,0.78380567)
\psdots[linecolor=black, dotsize=0.2](4.462821,2.8157659)
\psline[linecolor=black, linewidth=0.02, arrowsize=0.14cm 2.0,arrowlength=2.0,arrowinset=0.2]{->}(4.49867,2.8159072)(8.459961,0.8288106)
\psline[linecolor=black, linewidth=0.02, arrowsize=0.14cm 2.0,arrowlength=2.0,arrowinset=0.2]{->}(8.466804,0.7234342)(6.8159723,-1.4974847)
\psline[linecolor=black, linewidth=0.02, arrowsize=0.14cm 2.0,arrowlength=2.0,arrowinset=0.2]{->}(4.485681,2.7605798)(5.9256496,-1.3813556)
\psline[linecolor=black, linewidth=0.02, arrowsize=0.14cm 2.0,arrowlength=2.0,arrowinset=0.2]{->}(8.4713,0.7545193)(3.474037,-1.7813556)
\psline[linecolor=black, linewidth=0.02, arrowsize=0.14cm 2.0,arrowlength=2.0,arrowinset=0.2]{->}(5.2933917,-2.3103878)(3.5901659,-2.3361943)
\psline[linecolor=black, linewidth=0.02, arrowsize=0.14cm 2.0,arrowlength=2.0,arrowinset=0.2]{->}(1.9695542,-1.6119286)(0.41438738,0.68516135)
\psline[linecolor=black, linewidth=0.02, arrowsize=0.14cm 2.0,arrowlength=2.0,arrowinset=0.2]{->}(2.8159723,-1.3297427)(4.428663,2.7450569)
\psline[linecolor=black, linewidth=0.02, arrowsize=0.14cm 2.0,arrowlength=2.0,arrowinset=0.2]{->}(0.42073658,0.83740366)(4.3805585,2.790754)
\psline[linecolor=black, linewidth=0.02, arrowsize=0.14cm 2.0,arrowlength=2.0,arrowinset=0.2]{->}(5.4611335,-1.7684524)(0.450391,0.7386403)
\psline[linecolor=black, linewidth=0.02, arrowsize=0.14cm 2.0,arrowlength=2.0,arrowinset=0.2]{->}(0.47255373,0.78564155)(8.440997,0.7848223)
\psdots[linecolor=black, dotsize=0.2](6.322672,-1.7720917)
\psdots[linecolor=black, dotsize=0.2](6.6919026,-2.5618353)
\psdots[linecolor=black, dotsize=0.2](5.840621,-2.5720918)
\psline[linecolor=black, linewidth=0.02, arrowsize=0.14cm 2.0,arrowlength=2.0,arrowinset=0.2]{->}(5.826046,-2.596923)(6.2670717,-1.8174359)
\psline[linecolor=black, linewidth=0.02, arrowsize=0.14cm 2.0,arrowlength=2.0,arrowinset=0.2]{->}(6.6638327,-2.568853)(5.9421053,-2.5791094)
\psline[linecolor=black, linewidth=0.02, arrowsize=0.14cm 2.0,arrowlength=2.0,arrowinset=0.2]{->}(6.326451,-1.7758704)(6.660054,-2.4808638)
\psline[linecolor=black, linewidth=0.02, arrowsize=0.14cm 2.0,arrowlength=2.0,arrowinset=0.2]{->}(2.708907,-1.7723616)(3.04251,-2.477355)
\psline[linecolor=black, linewidth=0.02, arrowsize=0.14cm 2.0,arrowlength=2.0,arrowinset=0.2]{->}(3.0462887,-2.565344)(2.3245614,-2.5756006)
\psline[linecolor=black, linewidth=0.02, arrowsize=0.14cm 2.0,arrowlength=2.0,arrowinset=0.2]{->}(2.208502,-2.606529)(2.6495275,-1.8270419)
\psdots[linecolor=black, dotsize=0.2](2.2230768,-2.568583)
\psdots[linecolor=black, dotsize=0.2](3.074359,-2.5583265)
\psdots[linecolor=black, dotsize=0.2](2.7051282,-1.7685829)
\rput[bl](4.420243,2.9961944){$1$}
\rput[bl](8.7591095,0.71765184){$2$}
\rput[bl](6.05823,-1.6988077){$3$}
\rput[bl](6.872386,-2.6755772){$4$}
\rput[bl](5.520662,-2.6821108){$5$}
\rput[bl](2.4190283,-1.7340164){$6$}
\rput[bl](3.2291918,-2.6897333){$7$}
\rput[bl](1.8996091,-2.7006226){$8$}
\rput[bl](0.0,0.6820243){$9$}
\end{pspicture}
}
\end{center}
\caption{The tournament $T=R_5(R_1,R_1,R_3,R_3,R_1)$ in Example~\ref{ex:t5}}
\label{fig.ex-acy}
\end{figure}

\begin{proposition} \label{prop:disjoint ground}
If $\A$ and $\B$ are two families on disjoint ground sets, then $\CS(\A \cupdot \B) \equiv \CS(\A)\cupdot \CS(\B)$.
\end{proposition}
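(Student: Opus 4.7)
The plan is to construct an explicit acyclic matching on $\A\cupdot \B$ whose set of critical elements coincides with $\CS(\A)\cupdot \CS(\B)$. Let $\M_\A$ denote the acyclic matching on $\A$ built from a pivot sequence $(p_1,\ldots,p_k)$ lying in the ground set $V_1$ of $\A$, and $\M_\B$ the one on $\B$ from $(q_1,\ldots,q_l)\subseteq V_2$, where $V_1\cap V_2=\emptyset$. The first step is to observe that each $\gamma\in \A\cupdot \B$ has a \emph{unique} decomposition $\gamma=\alpha\cup\beta$ with $\alpha\in \A$ and $\beta\in \B$, obtained by setting $\alpha=\gamma\cap V_1$ and $\beta=\gamma\cap V_2$; this is the algebraic reason the dot-join factors cleanly under disjointness of ground sets.

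I would then apply the pivot construction to $\A\cupdot \B$ using the concatenated sequence of pivots $(p_1,\ldots,p_k,q_1,\ldots,q_l)$. The heart of the argument is the following claim, proved by induction on $i$: after applying $p_1,\ldots,p_i$, the critical family in $\A\cupdot \B$ equals $\CS_i(\A)\cupdot \B$, where $\CS_i(\A)$ denotes the unmatched elements of $\A$ after pivots $p_1,\ldots,p_i$. The step from $i$ to $i+1$ reduces to the observation that for $p=p_{i+1}\in V_1$ and any $\gamma=\alpha\cup\beta$ in the current critical family, the symmetric difference $\gamma\bigtriangleup\{p\}=(\alpha\bigtriangleup\{p\})\cup\beta$ lies in $\CS_i(\A)\cupdot \B$ if and only if $\alpha\bigtriangleup\{p\}$ lies in $\CS_i(\A)$, by uniqueness of decomposition. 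So the pivot matching on $\A\cupdot \B$ parametrically reproduces the pivot matching on $\A$ (once for each $\beta\in \B$), and the critical family becomes $\CS_{i+1}(\A)\cupdot \B$.

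Once the $p$-pivots are exhausted, the critical family is $\CS(\A)\cupdot \B$. A symmetric induction over the $q$-pivots then refines it to $\CS(\A)\cupdot \CS(\B)$, using that each $q_j\in V_2$ acts only on the $\beta$-coordinate. Acyclicity of the combined matching on $\A\cupdot \B$ is automatic, since the construction itself is a pivot construction on $\A\cupdot \B$; alternatively, one can view it as an iterated application of Lemma~\ref{lem:morse} to the ``matched/unmatched'' strata produced at each stage. Since $|\alpha\cup\beta|=|\alpha|+|\beta|$ by disjointness of ground sets, the set-theoretic equality $\CS(\A\cupdot \B)=\CS(\A)\cupdot \CS(\B)$ immediately yields the required equivalence $\equiv$.

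The main obstacle I anticipate is the careful bookkeeping to ensure the pivot construction on $\A\cupdot \B$ truly acts coordinatewise on the decomposition $\gamma=\alpha\cup\beta$; this rests entirely on the uniqueness of the decomposition afforded by $V_1\cap V_2=\emptyset$. Without disjointness the same $\gamma$ could split in several ways into an $\A$-part and a $\B$-part, critical elements could be paired non-factorially, and the clean inductive structure above would break down.
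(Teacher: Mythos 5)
Your proposal is correct and follows essentially the same route as the paper: you lift the matching on $\A$ to $\A \cupdot \B$ coordinatewise (once for each $\beta \in \B$) via the concatenated pivot sequence, and then lift the matching on $\B$ onto the remaining critical elements $\CS(\A)\cupdot \B$, exactly as in the paper's construction of $\M_1'$ and $\M_2'$. Your explicit appeal to the unique decomposition $\gamma = (\gamma\cap V_1)\cup(\gamma\cap V_2)$ makes the coordinatewise action cleaner than the paper's phrasing, but it is the same argument.
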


\begin{proof}
We write $\A=\{A_1,A_2,\ldots,A_k\}$ and $\B=\{B_1,B_2,\ldots,B_r\}$ so that 
$$\A \cupdot \B = \{ A_1B_1,A_1B_2,\ldots,A_1B_r,A_2B_1,A_2B_2,\ldots,A_2B_r,\cdots,A_kB_1,A_kB_2,\ldots,A_kB_r \},$$ 
where $A_iB_j$ stands for $A_i\cup B_j$. Let $\CS(\A)$ and $\CS(\B)$ be determined by acyclic matchings $\M_1$ and $\M_2$ via the sequences of pivots $(p_1,p_2,\ldots,p_{t_1})$ and $(r_1,r_2,\ldots,r_{t_2})$ respectively. We first observe that for $A_i, A_j \in \A$, if the pair $A_i$ and $A_j$ is matched in $\A$ by $\M_1$ via the pivot $p_k$, then for each $B_r \in \B$, $A_iB_r$ and $A_jB_r$ can be matched in $\A \cupdot \B$ via the pivot $p_k$. Similarly, for $B_i, B_j \in \B$, if the pair $B_i$ and $B_j$ is matched in $\B$ by $\M_2$ via the pivot $r_s$, then for each $A_r \in \A$, $A_rB_i$ and $A_rB_j$ can be matched in $\A \cupdot \B$ via the pivot $r_s$. \medskip

We then define $\M_1'$ as the set of pairs $(A_iB_r,A_jB_r)$ for each pair $(A_i,A_j)$ in $\M_1$ with the sequence of pivots $(p_1,p_2,\ldots,p_{t_1})$ and for each $B_r\in \B$. In addition, we define $\M_2'$ as the set of pairs $(A_iB_r,A_iB_t)$ for each pair $(B_r,B_t)$ in $\M_2$ with the sequence of pivots $(r_1,r_2,\ldots,r_{t_2})$ and for each $A_i\in \CS(\A,\M_1)$. Thus $\M_1'\cup \M_2'$ is an acyclic matching on  $\A \cupdot \B$ with the sequence of pivots $(p_1,p_2,\ldots,p_{t_1},r_1,r_2,\ldots,r_{t_2})$  and the set of critical elements is $ \CS(\A)\cupdot \CS(\B)$.
\end{proof}

\begin{lemma}\label{lem:highly-reg-S1}
There exist $a,b \in V(R_{2r+1})$ such that $\CS(\Acy(R_{2r+1})) \equiv \{\{a,b\}\}$ for every highly regular tournament $R_{2r+1}$ with $r\geq 1$.
\end{lemma}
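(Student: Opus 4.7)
The plan is to exhibit an explicit acyclic matching on $\Acy(R_{2r+1})$ whose only critical cell is a single $1$-face $\{a,b\}$, which together with Lemma~\ref{lem:morse} yields the statement. I label $V(R_{2r+1})=\{0,1,\ldots,2r\}$ so that $i\to j$ iff $j-i\in\{1,\ldots,r\}\pmod{2r+1}$. Because the out-degree of each vertex equals $r$, every transitive subset is contained in some cyclic arc $F_a=\{a,a+1,\ldots,a+r\}$, and these $2r+1$ arcs are exactly the facets of $\Acy(R_{2r+1})$.

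The first step is to apply the pivot $p_1=0$ from the scheme recalled in Section 3. This perfectly matches $\Delta_0$ and leaves unmatched exactly $\DE(0)$, the collection of acyclic $I\subseteq\{1,\ldots,2r\}$ that contain some pair $\{i,j\}$ with $i\in\{1,\ldots,r\}$, $j\in\{r+1,\ldots,2r\}$, and $j-i\le r$ (equivalently, $I\cup\{0\}$ carries a $3$-cycle through $0$). The second step is to cascade through successive pivots taken from the even-indexed vertices $p_2=2,\,p_3=4,\ldots$, matching at each stage any remaining face $I$ (with $p_k\notin I$) to $I\cup\{p_k\}$ whenever both are still in the unmatched pool and $I\cup\{p_k\}$ stays acyclic (equivalently, both fit inside some common arc $F_a$).

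The third step is to show by induction on $r$ that this cascade leaves exactly one $1$-face uncancelled. The base $r=1$ is immediate: pivoting at $0$ on $\Acy(R_3)=\partial\Delta^2$ cancels everything except $\{1,2\}$. For the inductive step the key observation is that a face survives the $k$-th pivot only when adjoining $p_k$ would force two vertices whose joint support cannot lie inside any arc of length $r+1$; this cyclic wrap-around obstruction steadily shrinks the survivor pool, and the $\mathbb{Z}/(2r+1)$-symmetry together with the arc-only facet structure identifies the surviving faces at each stage with a shifted $\DE$-like collection inside a smaller piece of the complex, closing the induction. Once each pivot step has been verified to be individually acyclic, Lemma~\ref{lem:morse} assembles the pieces into a single acyclic matching $\M$ on $\Acy(R_{2r+1})$ with $\CS(\Acy(R_{2r+1}),\M)\equiv\{\{a,b\}\}$.

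The main obstacle I anticipate is the case analysis inside the inductive step. Concretely, I must track which faces persist past each pivot and check that at the cyclic boundary—where the arc wraps from $2r$ back through $0$ to $1$—no rogue survivors appear; the induction hypothesis on a smaller rotationally symmetric piece is what closes this boundary case. A secondary but important point is to order the pivots so that the acyclicity of the combined matching, and not only of each individual piece, is guaranteed by Lemma~\ref{lem:morse}; this is achieved by always cascading from faces of larger dimension to smaller (so that no directed cycle is introduced in the reversed Hasse diagram).
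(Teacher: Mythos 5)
Your first step (pivoting at a single vertex and identifying the unmatched pool with $\DE(p_1)$, the faces spanning a directed triangle through $p_1$) matches the paper's opening move, and your identification of the facets of $\Acy(R_{2r+1})$ with the $2r+1$ cyclic arcs of length $r+1$ is correct and is the structural fact the whole argument rests on. But the heart of your proof --- the claim that cascading through the even-indexed pivots $2,4,\ldots$ cancels everything in $\DE(0)$ except a single edge --- is asserted, not proved. The ``induction on $r$'' is never actually set up: $\Acy(R_{2r-1})$ does not sit inside $\Acy(R_{2r+1})$ as a subcomplex in any way that would let you identify the survivor pool after $k$ pivots with ``a shifted $\DE$-like collection inside a smaller piece of the complex,'' and beyond the $r=1$ base case you give no mechanism for the inductive step; you yourself flag this case analysis as the main obstacle, which is exactly where the content of the lemma lives. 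There is also a hygiene problem with the rule ``pair $I$ with $I\cup\{p_k\}$ whenever both are still in the unmatched pool'': Lemma~\ref{lem:morse} requires that no face of the later (unmatched) family be contained in a face of the earlier (matched) family, i.e.\ the unmatched pool must remain upward closed after each pivot. That is automatic for the $\DE(p)/\Delta_p$ splitting but not for your ad hoc rule, and ordering pivots ``from larger dimension to smaller'' is not the hypothesis of that lemma.

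For comparison, the paper needs no induction and no cascade: after the first pivot $v_1$ it uses exactly two further pivots, $v_{r+1}$ and then $v_{r+2}$. At the second pivot it matches $\gamma\leftrightarrow\gamma\cup\{v_{r+1}\}$ precisely for those $\gamma\in\DE(v_1)$ containing some pair $\{v_i,v_j\}$ with $2\le i\le r$ and $r+2\le j\le 2r$ (a condition not involving $v_{r+1}$, hence an honest involution with an upward-closed complement), leaving a pool $\U$ of faces forced to contain $v_{r+1}$ together with some $v_k$ for $r+2\le k\le 2r+1$; at the third pivot every element of $\U$ other than $\{v_{r+1},v_{r+2}\}$ is matched with $v_{r+2}$, and Lemma~\ref{lem:morse} glues the three stages. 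If you want to salvage your route, either adopt this explicit three-pivot scheme or, at minimum, write down precisely which faces survive each of your pivots $2,4,\ldots$ and verify the upward-closedness needed to invoke Lemma~\ref{lem:morse} at every stage.
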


\begin{proof}
Let $V(R_{2r+1})=\{v_1,v_2,\ldots,v_{2r+1}\}$. We will construct an acyclic matching on $\Acy(R_{2r+1})$. First, select $p_1=v_1$ as a pivot, and let $\M_1$ be the set of pairs $(\gamma,\gamma\cup \{v_1\})$ for $\gamma \in \Acy(R_{2r+1})$ with $v_1 \notin \gamma$. It follows that  the set of unmatched elements with respect to $\M_1$ is  
\begin{align*}
\DE(v_1)=\langle v_2v_{r+2},v_3v_{r+2},&v_3v_{r+3},v_4v_{r+2},v_4v_{r+3},v_4v_{r+4},\ldots,\\
&v_{r+1}v_{r+2}, v_{r+1}v_{r+3},v_{r+1}v_{r+4},\cdots,v_{r+1}v_{2r+1} \rangle . 
\end{align*}

\begin{figure}[h!]
\begin{center}
\psscalebox{1.0 1.0} 
{
\begin{pspicture}(0,-2.7661104)(5.6168423,2.7661104)
\psdots[linecolor=black, dotsize=0.2](3.0721805,2.3127453)
\psdots[linecolor=black, dotsize=0.2](4.500752,1.7202641)
\psdots[linecolor=black, dotsize=0.2](5.0721803,0.29169264)
\psdots[linecolor=black, dotsize=0.2](4.786466,-1.7083074)
\psdots[linecolor=black, dotsize=0.2](3.7578948,-2.279736)
\psdots[linecolor=black, dotsize=0.2](2.3293233,-2.279736)
\psdots[linecolor=black, dotsize=0.2](1.1864662,-1.7083074)
\psdots[linecolor=black, dotsize=0.2](0.70977443,0.20748211)
\psdots[linecolor=black, dotsize=0.2](1.3578948,1.7202641)
\psline[linecolor=black, linewidth=0.02](1.3578948,1.714249)(3.0421052,2.2931964)
\psline[linecolor=black, linewidth=0.02](0.70526314,0.25109115)(3.0736842,2.28267)
\psline[linecolor=black, linewidth=0.02](1.1894736,-1.685751)(3.0631578,2.2721438)
\psline[linecolor=black, linewidth=0.02](2.336842,-2.2962773)(3.0526316,2.28267)
\psline[linecolor=black, linewidth=0.02](3.768421,-2.254172)(3.0736842,2.28267)
\psline[linecolor=black, linewidth=0.02](4.768421,-1.6962773)(3.0947368,2.3037226)
\psline[linecolor=black, linewidth=0.02](5.0526314,0.31424904)(3.0842106,2.3353016)
\psline[linecolor=black, linewidth=0.02](4.5157895,1.7247753)(3.0526316,2.3247755)
\rput[bl](3.0,2.5661104){$v_1$}
\rput[bl](4.726316,1.6931964){$v_2$}
\rput[bl](5.336842,0.16688062){$v_3$}
\rput[bl](5.031579,-1.9068036){$v_r$}
\rput[bl](3.6359434,-2.7376122){$v_{r+1}$}
\rput[bl](1.9858793,-2.7661104){$v_{r+2}$}
\rput[bl](0.77894735,-2.1173298){$v_{r+3}$}
\rput[bl](0.0,0.14582798){$v_{2r}$}
\rput[bl](0.67368424,1.8616174){$v_{2r+1}$}
\rput{14.64782}(-0.18570182,-0.22139364){\rput[bl](0.76842105,-0.8331194){$\vdots$}}
\rput{-10.619656}(0.24643539,0.9221513){\rput[bl](5.0842104,-0.86469835){$\vdots$}}
\psline[linecolor=black, linewidth=0.02, arrowsize=0.14cm 2.0,arrowlength=2.0,arrowinset=0.2]{->}(2.1578948,1.9819524)(2.430622,2.0864978)
\psline[linecolor=black, linewidth=0.02, arrowsize=0.14cm 2.0,arrowlength=2.0,arrowinset=0.2]{->}(1.839713,1.227407)(2.0260766,1.3910433)
\psline[linecolor=black, linewidth=0.02, arrowsize=0.14cm 2.0,arrowlength=2.0,arrowinset=0.2]{->}(2.0897129,0.22740693)(2.2260766,0.51831603)
\psline[linecolor=black, linewidth=0.02, arrowsize=0.14cm 2.0,arrowlength=2.0,arrowinset=0.2]{->}(2.6578948,-0.2089567)(2.6942585,-0.027138522)
\psline[linecolor=black, linewidth=0.02, arrowsize=0.14cm 2.0,arrowlength=2.0,arrowinset=0.2]{->}(3.8078947,2.0137706)(3.9442585,1.959225)
\psline[linecolor=black, linewidth=0.02, arrowsize=0.14cm 2.0,arrowlength=2.0,arrowinset=0.2]{->}(4.098804,1.2955887)(4.189713,1.2046796)
\psline[linecolor=black, linewidth=0.02, arrowsize=0.14cm 2.0,arrowlength=2.0,arrowinset=0.2]{->}(3.930622,0.3092251)(3.9988039,0.14104329)
\psline[linecolor=black, linewidth=0.02, arrowsize=0.14cm 2.0,arrowlength=2.0,arrowinset=0.2]{->}(3.4442585,-0.15441126)(3.480622,-0.38622943)
\end{pspicture}
}
\end{center}
\caption{ }
\label{fig.sectionable2}
\end{figure}

Now, we select $v_{r+1}$ as another pivot on the family $\DE(v_1)$. By the structure of highly regular tournaments, any ordered $r+1$ elements forms a facet (see Figure \ref{fig.sectionable2}). So, for $\gamma \in \DE(v_1)$ with $v_{r+1} \notin \gamma$, if $\{v_i,v_j\} \subset \gamma$ for $2 \leq i \leq r$ and $r+2\leq j \leq 2r$, then the set of pairs $\big( \gamma, \gamma \cup \{v_{r+1}\}  \big)$ forms an acyclic matching, say $\M_2$. In this case, the unmatched elements with respect to $\M_2$ can be given as follows.
$$\U=\DE(p_1) \setminus \{\gamma \in \DE(p_1) : \{v_i,v_j\} \subset \gamma \text{ for }  2 \leq i \leq r\;\textnormal{and}\; r+2\leq j \leq 2r  \}$$
This implies that $ \beta \in \U$ if and only if $\{v_{r+1},v_k\} \subset \beta$ for  $ r+2\leq k \leq 2r+1$ and $\{v_i,v_j \} \not \subset \beta$ for each $2\leq i \leq r$ and $r+1\leq j \leq 2r$. \medskip

Let $v_{r+2}$ be the last pivot on the family $\U$. Similar to the above cases,  it is clear that $\beta \cup \{v_{r+2}\}$ is a face for every $\beta \in \U$. Therefore, if $\beta \in \U$ with $v_{r+2} \notin \beta$, we can say that the set of pairs $\big( \beta, \beta \cup \{v_{r+2}\}  \big)$  forms an acyclic matching, say $\M_3$. Notice that the only remaining unmatched element is  $\{v_{r+1},v_{r+2}\}$. We thus obtain an acyclic matching $\M=\M_1\cup \M_2 \cup \M_3$ by Lemma \ref{lem:morse}. Consequently, we obtain $\CS(\Acy(R_{2r+1}),\M) \equiv \{\{v_{r+1},v_{r+2}\}\}$ so that $\Acy(R_{2r+1})$ is homotopy equivalent to a CW-complex with one $0-\text{cell}$ and one $1-\text{cell}$. 
\end{proof}

\section{Topology of Sectionable Tournaments}
This section is devoted to a complete description of the cell-structure of acyclic complexes of sectionable tournaments. Our main machinery is the powerful tool of discrete Morse theory. In particular, we provide an inductive calculation of the highest dimension of a critical cell occurring in a CW-complex structure of $\Acy(T)$. 

We begin with a proper definition of sectionable tournaments. 

\begin{definition} Let $T$ be a tournament and $m\geq 3$ be an odd integer. We then call $T$ an $m$-\emph{sectionable tournament}, if either it is a transitive tournament or else it is a simply disconnected tournament $T=R_m(P^1,P^2,\ldots,P^m)$ such that each block $P^i$ is a $t$-sectionable tournament for some $t\leq m$. When there is no confusion, we sometimes drop the parameter $m$ from our notation, and use the term sectionable tournament.
\end{definition}

Notice that every set of ordered $r+1$ elements create a face in a highly regular tournament $R_{2r+1}$. Thus, we conclude the following by using Corollary \ref{cor:join-simplical}.

\begin{proposition}\label{prop:dim}
Let $T=R_{2r+1}(P^1,P^2,\ldots,P^{2r+1})$ be a tournament such that $\dim(P^i)=d_i$ for every $i\in [2r+1]$. Then,
\[ 
\dim(T)=\max_{{ i \in \{1,2,\ldots,2r+1\}}} \{d_i+d_{i+1}+\ldots+d_{i+r}+r \}
\]
where the index of $d_{j}$ is taken modulo $(2r+1)$ for each $1\leq j \leq 2r+1$.
\end{proposition}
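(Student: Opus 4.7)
The plan is to reduce the computation to the simplicial join formula once the facets of $\Acy(R_{2r+1})$ are identified, then combine this with the composition structure of $T$. Two ingredients are needed: a characterization of the facets of $\Acy(R_{2r+1})$, and the fact that acyclic subsets of a composition decompose jointly according to the quotient and the blocks.

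First I would show that the facets of $\Acy(R_{2r+1})$ are exactly the $2r+1$ windows of $r+1$ cyclically consecutive vertices $\{v_i, v_{i+1}, \ldots, v_{i+r}\}$, with indices taken modulo $2r+1$. Any such window induces a transitive subtournament by the cyclic defining relation of highly regular tournaments, and is therefore a face. Conversely, a subset of size at least $r+2$ must contain two vertices at cyclic distance exceeding $r$ in both directions; together with any third vertex they yield a dicycle of length three. This identifies $\dim(\Acy(R_{2r+1}))=r$ and the facets explicitly.

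Next, for each $i$ I would examine the induced sub-tournament $T[P^i \cup P^{i+1} \cup \cdots \cup P^{i+r}]$. Since $\{v_i, v_{i+1}, \ldots, v_{i+r}\}$ induce a transitive sub-tournament of $R_{2r+1}$, this is a composition with a transitive quotient and blocks $P^i, \ldots, P^{i+r}$. A subset $S$ of such a composition is acyclic if and only if $S \cap P^{i+j}$ is acyclic in $P^{i+j}$ for every $j$, because the inter-block edges are oriented according to a transitive tournament and hence cannot participate in any dicycle. This identifies
\[
\Acy(T[P^i \cup \cdots \cup P^{i+r}]) \cong \Acy(P^i) * \Acy(P^{i+1}) * \cdots * \Acy(P^{i+r}),
\]
and iterating Corollary~\ref{cor:join-simplical} yields dimension $d_i + d_{i+1} + \cdots + d_{i+r} + r$. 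Maximising over $i$ produces the lower bound.

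For the matching upper bound, I would show that every face $F \in \Acy(T)$ satisfies $F \subseteq P^i \cup \cdots \cup P^{i+r}$ for some $i$. The projection $\Gamma_{T,R}(F) \subseteq V(R_{2r+1})$ must itself be acyclic in $R_{2r+1}$, since any dicycle in $\Gamma_{T,R}(F)$ lifts to a dicycle in $T[F]$ by selecting one vertex of $F$ from each block visited by the projected cycle; the equivalent-set property of the blocks ensures the lifted arcs are present. By the first step, $\Gamma_{T,R}(F)$ is contained in some consecutive window of $r+1$ vertices, giving the required containment. The main obstacle will be this lifting argument: one must rule out dicycles in $T$ whose projection to $R_{2r+1}$ revisits a single vertex, which is again handled by the equivalent-set property (edges between two distinct blocks all carry a uniform orientation, so no intra-block detour can create a spurious dicycle at the quotient level). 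Once this is in place, every face of $\Acy(T)$ is a face of one of the join subcomplexes from the previous step, and both bounds match, yielding the claimed formula.
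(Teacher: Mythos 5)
Your overall architecture is sound and in fact considerably more complete than the paper's own justification, which consists of a single sentence observing that every window of $r+1$ cyclically consecutive vertices of $R_{2r+1}$ spans a face and then invoking Corollary~\ref{cor:join-simplical}. Your lower bound, via the join decomposition of $\Acy(T[P^i\cup\cdots\cup P^{i+r}])$ over a transitive quotient, and your upper bound, via projecting a face $F$ under $\Gamma_{T,R_{2r+1}}$ and lifting any dicycle of the projection back into $F$ (which works precisely because the vertices of a dicycle in the quotient are distinct and the blocks are equivalent sets), are both correct and are exactly what is needed to turn the paper's sketch into a full proof. The worry you raise about dicycles whose projection revisits a vertex is moot: you only need the implication ``dicycle downstairs $\Rightarrow$ dicycle upstairs,'' not its converse.

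There is, however, one genuinely broken step: your justification of the converse half of the facet characterization. You assert that a subset of $V(R_{2r+1})$ of size at least $r+2$ ``must contain two vertices at cyclic distance exceeding $r$ in both directions.'' No such pair exists, since the two cyclic distances between any pair of vertices of $\mathbb{Z}_{2r+1}$ sum to $2r+1$ and hence cannot both exceed $r$; and even granting some pair with a long gap, it is not true that \emph{any} third vertex completes it to a dicycle. The statement you actually need --- every acyclic set of $R_{2r+1}$ is contained in some window $\{v_s,v_{s+1},\ldots,v_{s+r}\}$ --- is true, and it is what your upper bound step silently uses, so replace the argument as follows: a transitive set $S$ has a source $v_s$, every other element of $S$ lies in $N^+(v_s)=\{v_{s+1},\ldots,v_{s+r}\}$, hence $S\subseteq\{v_s,v_{s+1},\ldots,v_{s+r}\}$. (The out-degree count --- a transitive set of size $k$ has a vertex of out-degree $k-1$, while every vertex of $R_{2r+1}$ has out-degree $r$ --- gives only $|S|\le r+1$, whereas you need the containment in a single window, so use the source argument.) With this repair the proof is complete.
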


Since $\dim(T)\geq 0$ for every sectionable tournament $T$, the following holds.

\begin{corollary}\label{cor:dim}
If $\dim(P^i)=d_i$ for each $i\in [2r+1]$ in $T=R_{2r+1}(P^1,P^2,\ldots,P^{2r+1})$, then  
\[ 
\dim(T)\geq \max_{\underset{ i\neq j}{ i,j \in \{1,2,\ldots,2r+1\}}} \{d_i+d_{j}+r \}.
\]
\end{corollary}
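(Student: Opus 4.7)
The plan is to derive the inequality directly from Proposition~\ref{prop:dim}. The key observation is that the cyclic window $\{k, k+1, \ldots, k+r\}$ (mod $2r+1$) has size $r+1$, which is strictly more than half of $2r+1$, so any two distinct indices of $\{1, \ldots, 2r+1\}$ fit into some such window.

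First I would fix an arbitrary pair $i \neq j$ in $\{1, \ldots, 2r+1\}$ and argue that the cyclic distance between $i$ and $j$ in $\mathbb{Z}/(2r+1)\mathbb{Z}$ is at most $\lfloor (2r+1)/2 \rfloor = r$. Consequently, one of the two cyclic arcs from $i$ to $j$ has length at most $r$; say $j \equiv i + s \pmod{2r+1}$ for some $1 \leq s \leq r$ (otherwise swap the roles of $i$ and $j$). Then taking $k = i$ gives a consecutive window $\{k, k+1, \ldots, k+r\}$ (mod $2r+1$) that contains both $i$ and $j$.

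Next I would use that every sectionable tournament is nonempty, so $d_\ell = \dim(P^\ell) \geq 0$ for all $\ell \in \{1, \ldots, 2r+1\}$. This gives
\[
d_k + d_{k+1} + \cdots + d_{k+r} + r \;\geq\; d_i + d_j + r,
\]
since the remaining $r-1$ dimensions appearing on the left-hand side are non-negative. Combining this with Proposition~\ref{prop:dim} yields
\[
\dim(T) \;=\; \max_{\ell \in \{1, \ldots, 2r+1\}}\{d_\ell + d_{\ell+1} + \cdots + d_{\ell+r} + r\} \;\geq\; d_i + d_j + r,
\]
and taking the maximum over all pairs $i \neq j$ completes the proof.

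Since the statement is a direct consequence of Proposition~\ref{prop:dim}, there is no real obstacle here; the only point that requires a moment of attention is verifying that any two distinct residues modulo $2r+1$ lie within some window of $r+1$ consecutive residues, which follows because $2r+1$ is odd and the window is larger than half the cycle.
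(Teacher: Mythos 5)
Your proof is correct and follows the same route as the paper, which justifies the corollary in one line by noting that the block dimensions are non-negative and invoking Proposition~\ref{prop:dim}; you simply make explicit the (true and easy) fact that any two distinct indices modulo $2r+1$ lie in a common cyclic window of $r+1$ consecutive indices.
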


\begin{definition}
We call a set of ordered integers $I:=(i_0,i_1,\ldots,i_j)$ as a \emph{block sequence} in $T=R_m(P^1,\ldots,P^m)$ with respect to a sectionable subtournament $S_j$ of $T$ if there exist sectionable tournaments $S_0,S_1,\ldots,S_{j-1}$ such that $S_r$ is the $(i_r)^{th}$ block of $S_{r-1}$ for each $1\leq r\leq j$, where $S_0$ is the $(i_0)^{th}$ block of $T$. If $I=(i_0,i_1,\ldots,i_j)$ is a block sequence in $T$ with respect to $S_j$, and $Q$ is a sectionable tournament, we denote by $T(I;Q)$, the tournament obtained from $T$ by replacing $S_j$ with $Q$. 
\end{definition}

For an example, consider the tournament $T'=R_5(R_1,T,R_3,R_5,R_1)$, where $T$ is the tournament depicted in Figure~\ref{fig.ex-acy}. 
We observe that the tournament $R_3$ in $T$  consisting of the vertices $3,4,5$ can be detected via a block sequence $I=(2,3)$, where $S_0=T$, $S_1=R_3$. Indeed, $T'(I;R_1)$ is a tournament obtained from $T'$ by replacing the subblock $R_3$ of $T$ with $R_1$ so that we have $T'(I;R_1)=R_5(R_1,T^*,R_3,R_5,R_1)$, where  $T^*=R_5(R_1,R_1,R_1,R_3,R_1)$.

\begin{lemma}\label{lem:contraction Tr}
Let $I=(i_0,i_1,\ldots,i_j)$ be a block sequence in a sectionable tournament $T=R_m(P^1,P^2,\ldots,P^m)$ with respect to a sectionable subtournament $S_j$ of $T$. If $S_j$ is isomorphic to a highly regular tournament $R_s$ for some $s\geq 5$, then $\Acy(T)\simeq \Acy(T(I;R_3))$.
\end{lemma}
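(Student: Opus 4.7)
The plan is to proceed by induction on the length $j$ of the block sequence, reducing to the base case $j=0$ where $S_0=P^{i_0}\cong R_s$ is one of the top-level blocks of $T=R_m(P^1,\ldots,P^m)$. For $j>0$, the block $S_j$ sits inside $P^{i_0}$ with a shorter block sequence $I'=(i_1,\ldots,i_j)$ in $P^{i_0}$; the inductive hypothesis gives $\Acy(P^{i_0})\simeq \Acy(P^{i_0}(I';R_3))$, and lifting this to $\Acy(T)$ uses exactly the base-case reasoning applied to $T$ with $P^{i_0}$ modified.

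For the base case, the structural observation I would exploit is that $\Acy(T)$ decomposes over the quotient $R_m$: every face of $\Acy(T)$ projects under $\Gamma_{T,R_m}$ to a face of $\Acy(R_m)$, and for each $F\in \Acy(R_m)$ the faces with projection $F$ form a simplicial join $\ast_{w_i\in F}\Acy(P^i)$. Replacing $\Acy(P^{i_0})=\Acy(R_s)$ by $\Acy(R_3)$ modifies this decomposition only in strata with $w_{i_0}\in F$, and in each such stratum the change is a join with a homotopy equivalent complex, because $\Acy(R_s)\simeq \Acy(R_3)\simeq S^1$ by Lemma \ref{lem:highly-reg-S1}. A gluing argument (Mayer--Vietoris / homotopy colimit over the face poset of $\Acy(R_m)$) then yields $\Acy(T)\simeq \Acy(T(I;R_3))$.

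Since the paper's toolkit is restricted to edge contractions (Theorem \ref{thm:hmtpy-edge}) and discrete Morse theory (Theorem \ref{thm:Forman}), the in-paper realization would construct an explicit acyclic matching on $\Acy(T)$ with pivots chosen from $V(R_s)$ in the style of Lemma \ref{lem:highly-reg-S1} (e.g.\ $v_1, v_{r+1}, v_{r+2}$ with $s=2r+1$) and extended compatibly across the rest of the complex, and then identify the resulting critical cells with those of a parallel matching on $\Acy(T(I;R_3))$ via Proposition \ref{prop:disjoint ground}. The main obstacle is that direct edge contractions entirely within $R_s$ are blocked—every pair in $V(R_s)$ lies in a 3-cycle of $R_s$, which is a minimal non-face of $\Acy(T)$—so a Morse-matching route is essentially forced. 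The delicate point is ensuring that the bijection of critical cells respects the CW-attaching maps and not merely cell counts; reducing to a setting where the cells arise from a dot-join product (via Proposition \ref{prop:disjoint ground}) is the cleanest way to bypass this, since there the Morse CW structure is determined by the factor structures, one of which ($\Acy(R_s)$ vs.\ $\Acy(R_3)$) contributes the same single critical edge in both cases.
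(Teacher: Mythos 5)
Your second paragraph --- the ``in-paper realization'' --- is essentially the paper's proof: after reducing to $j=0$ (the paper handles $j\geq 1$ simply by rerunning the same argument with $S_j$ in place of $P^1$, rather than by a formal induction), it runs the matching of Lemma~\ref{lem:highly-reg-S1} with pivots $v_1,v_{t+1},v_{t+2}$ inside $V(R_s)$, $s=2t+1$, and identifies the critical cells of $\Acy(T)$ with those of $\Acy(T(I;R_3))$. The one ingredient your sketch leaves implicit, and which is exactly what makes the matching ``extend compatibly across the rest of the complex'', is the block property: since the vertices of $P^1\cong R_s$ have identical out/in-neighbours outside $P^1$, a pair $\alpha_i\subset V(P^1)$ and a pair $\beta_j\subset V(T-P^1)$ that each form a triangle with the pivot $v_1$ can never lie in a common face, so $\DE(v_1)$ splits as $\DE_1\cup\DE_2$ with $\DE_1$ generated by the inside pairs and $\DE_2$ by the outside pairs. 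The pivots $v_{t+1},v_{t+2}$ then collapse $\DE_1$ to the single critical edge $\{v_{t+1},v_{t+2}\}$, while $\DE_2$ is verbatim the same family for $T$ and for $T(I;R_3)$ once one chooses $V(R_3)=\{v_1,v_{t+1},v_{t+2}\}$; this yields the literal equality $\CS(\Acy(T))=\CS(\Acy(T(I;R_3)))$, and the attaching-map issue you rightly raise is discharged by the paper's appeal to \cite[Theorem 4.4]{jacop} rather than by Proposition~\ref{prop:disjoint ground} (which plays no role in this lemma). Your first route --- stratifying $\Acy(T)$ over the faces of $\Acy(R_m)$ as joins of the $\Acy(P^i)$ and gluing join-wise equivalences by a homotopy-colimit or Mayer--Vietoris argument --- is genuinely different from what the paper does; it is plausible in principle, but it would require setting up equivalences compatible on the intersections of the strata, which is nontrivial and is supported by nothing in the paper's toolkit, so it is best regarded as motivation rather than a proof.
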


\begin{proof}
We first prove the claim when $j=0$. By using the symmetry of highly regular tournament, we may assume without loss of generality that $S_0=P^1$. Furthermore, it is sufficient to verify that $\CS(\Acy(T))= \CS(\Acy(T(I; R_3)))$ \cite[Theorem 4.4]{jacop}.  The main idea of our proof is based on creating an acyclic matching on $\Acy(T)$ which allows us to divide $\CS(\Acy(T))$ into two disjoint subsets. One of those will contribute only a critical cell of dimension $1$ to $\Acy(T)$. On the other hand, the contribution of the other set to $\Acy(T)$ is the same as with a tournament where  $S_0$ is replaced with $R_1$ in $T$.

For the latter claim, we follow the proof of Lemma \ref{lem:highly-reg-S1}.  Suppose that $V(P^1)=V(R_{s})=\{v_1,v_2,\ldots,v_{s}\}$, where $s=2t+1$ for some $t\geq 2$. We will construct an acyclic matching on $\Acy(T)$. First, select $v_1$ as a pivot, and let $\M_1$ be the set of pairs $(\gamma,\gamma\cup \{v_1\})$ for $\gamma \in \Acy(T)$ with $v_1 \notin \gamma$. It follows that  the set of unmatched elements with respect to $\M_1$ is \linebreak $\DE(v_1)=\langle \alpha_1,\alpha_2,\ldots,\alpha_k,\beta_1,\beta_2,\ldots,\beta_\ell \rangle_T$, where $\alpha_i\subset V(P^1)$ and $\beta_j\subset V(T-P^1)$. By Lemma \ref{lem:highly-reg-S1}, we have 
\begin{align*}
\{\alpha_1,\alpha_2,\ldots,\alpha_k\}=\{v_2v_{t+2},v_3v_{t+2},&v_3v_{t+3},v_4v_{t+2},v_4v_{t+3},v_4v_{t+4},\ldots,\\
&v_{t+1}v_{t+2}, v_{t+1}v_{t+3},v_{t+1}v_{t+4},\cdots,v_{t+1}v_{2t+1}\}. 
\end{align*} 
Observe that that when $i\in [k]$ and $j\in [\ell]$, the set $\alpha_i\cup\beta_j$ does not create a face in $\Acy(T)$, and so such a set cannot be contained in $\DE(v_1)$. It then follows that $\DE(v_1)$ can be partitioned into two sets $\DE_1=\langle \alpha_1,\alpha_2,\ldots,\alpha_k \rangle_T$ and $\DE_2=\langle\beta_1,\beta_2,\ldots,\beta_\ell\rangle_T$. This also implies that $\CS(\Acy(T))=\CS(\DE_1) \cup \CS(\DE_2)$.

We next consider $\DE_1$ and construct a Morse matching on it as in Lemma \ref{lem:highly-reg-S1}. Select $v_{t+1}$ as a pivot on the family $\DE_1$. By the structure of highly regular tournaments, any set of ordered $t+1$ elements forms a facet. So, for $\gamma \in \DE_1$ with $v_{t+1} \notin \gamma$, if $\{v_i,v_j\} \subset \gamma$ for $2 \leq i \leq t$ and $t+2\leq j \leq 2t$, then the set of pairs $\big( \gamma, \gamma \cup \{v_{t+1}\}  \big)$ forms an acyclic matching, say $\M_2$. In this case, the unmatched elements with respect to $\M_2$ can be given as follows.
$$\U=\DE_1 \setminus \{\gamma \in \DE_1 : \{v_i,v_j\} \subset \gamma \text{ for }  2 \leq i \leq t\;\textnormal{and}\; t+2\leq j \leq 2t \}$$
This implies that $ \beta \in \U$ if and only if $\{v_{t+1},v_k\} \subset \beta$ for  $ t+2\leq k \leq 2t+1$ and $\{v_i,v_j \} \not \subset \beta$ for each $2\leq i \leq t$ and $t+1\leq j \leq 2t$. \medskip

Let $v_{t+2}$ be the last pivot on the family $\U$. Similar to the above cases,  it is clear that $\beta \cup \{v_{t+2}\}$ is a face for every $\beta \in \U$. Therefore, if $\beta \in \U$ with $v_{t+2} \notin \beta$, we conclude that the set of pairs $\big( \beta, \beta \cup \{v_{t+2}\}  \big)$  forms an acyclic matching, say $\M_3$. Notice that the only remaining unmatched element is  $\{v_{t+1},v_{t+2}\}$. We thus obtain an acyclic matching $\M=\M_1\cup \M_2 \cup \M_3$ by Lemma \ref{lem:morse}. 
It follows $\CS(\DE_1)\equiv \{\{v_{t+1},v_{t+2}\}\}$ and consequently, $\CS(\Acy(T))= \{\{v_{t+1},v_{t+2}\}\} \cup \CS(\DE_2)$.

The above argument yields that  $\CS(\Acy(T(I; R_3)))= \{\{v_{t+1},v_{t+2}\}\} \cup \CS(\DE_2)$ in which we particularly choose $V(R_3)=\{v_1,v_{t+1},v_{t+2}\}$. This in turn implies that we have $\CS(\Acy(T))=\CS(\Acy(T(I; R_3)))$, which completes the proof when $j=0$.

Finally, when $j\geq 1$, we repeat the same argument as above by replacing the role of $P^1$ with $S_j$. 
\end{proof}

\begin{remark}\label{rem:Rs-R3}
In the view of Lemma \ref{lem:contraction Tr}, if a subblock of a sectionable tournament $T$ is a highly regular tournament of order at least five, then we may replace it by a directed triangle $C_3$ without altering the homotopy type of $\Acy(T)$. Therefore, we will implicitly assume that every non-trivial and highly regular tournament subblock of a sectionable tournament is a directed triangle $C_3$ in the rest of this section. On the other hand, if a subblock of a sectionable tournament $T$ is itself a transitive tournament, we may think of it as $R_1$ by repeated use of Theorem~\ref{thm:hmtpy-edge}.
\end{remark}

Following the explanation given in Remark~\ref{rem:Rs-R3}, we may consider  any subblock which is a highly regular tournament as a copy of $R_3$.
\begin{definition}
Let $T=R_m(P^1,P^2,\ldots,P^m)$ be a sectionable tournament. If $I=(i_0,i_1,\ldots,i_j)$ is a block sequence in $T$ with respect to a sectionable subtournament $S_j=R_3$ of $T$, we then call $V(S_j)=\{v,w,z\}$ as a \emph{deep-triangle} of $T$ with the block sequence $I$.
\end{definition}

We remark that if $T=R_m(P^1,P^2,\ldots,P^m)$ is a sectionable tournament and $\{v,w,z\}$ is a deep-triangle in $T$, then  $\DE(v)=\langle wz,\beta_1,\beta_2,\ldots,\beta_k \rangle $ for some $\beta_i\in \Acy(T)$ where $\vert  \beta_i \vert=2$. 

\begin{lemma}\label{lem:cs-reduction}
Let $T=R_m(P^1,P^2,\ldots,P^m)$ be a sectionable tournament such that there exists a vertex $v \in V(P^1)$ which lies in a deep-triangle $\{v,w,z\}$ with a block sequence $I$, then 
$$\CS(\Acy(T))=\CS(\Sigma(wz:v)) \cup \CS(\Acy(T(I; R_1))) .$$
\end{lemma}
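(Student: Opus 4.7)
The plan is to start the construction of a Morse matching on $\Acy(T)$ by choosing $v$ as the first pivot. By the remark that precedes the lemma, the set of unmatched elements is
\[
\DE(v)=\langle wz,\beta_1,\beta_2,\ldots,\beta_k\rangle_T,
\]
where each $\beta_i=\{x_i,y_i\}$ is a $2$-face with $T[v,x_i,y_i]\cong R_3$. I would then split this family into two disjoint subfamilies
\[
\DE(v)=\Sigma(wz:v)\ \sqcup\ \DE^{*},
\]
where $\DE^{*}$ consists of those faces $F\in\DE(v)$ with $\{w,z\}\not\subseteq F$.

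The key step is to exploit the block equivalence that makes $\{v,w,z\}$ a deep-triangle. Iterating the equivalent-set property along the block sequence $I$, one sees that the vertices $v,w,z$ share exactly the same external edges in $T$. Consequently each pair $\beta_i=\{x_i,y_i\}$ satisfies $\beta_i\cap\{v,w,z\}=\emptyset$ and $T[u,x_i,y_i]\cong R_3$ for every $u\in\{v,w,z\}$. Hence any face containing $w$ (or $z$) together with some $\beta_i$ would contain the directed triangle $\{w,x_i,y_i\}$ (resp.\ $\{z,x_i,y_i\}$), which is impossible. Thus every $F\in\DE^{*}$ avoids $\{v,w,z\}$ altogether. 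Since each element of $\Sigma(wz:v)$ contains $\{w,z\}$ while no element of $\DE^{*}$ does, no element of $\Sigma(wz:v)$ is contained in any element of $\DE^{*}$, and Lemma~\ref{lem:morse} combines Morse matchings on the two parts to yield
\[
\CS(\Acy(T))=\CS(\Sigma(wz:v))\cup \CS(\DE^{*}).
\]

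To finish, I would identify $\CS(\DE^{*})$ with $\CS(\Acy(T(I;R_1)))$. Write $T'=T(I;R_1)$ and let $v^{*}$ be the vertex of $T'$ that replaces $\{v,w,z\}$. Since $v^{*}$ inherits the external edges of the deep block, the pairs forming a directed triangle with $v^{*}$ in $T'$ are exactly $\beta_1,\ldots,\beta_k$. Picking $v^{*}$ as the initial pivot in $\Acy(T')$ leaves unmatched the family $\DE_{T'}(v^{*})=\{F'\in\Acy(T'):v^{*}\notin F',\ \beta_i\subseteq F'\text{ for some }i\}$. Because the acyclic subsets of $V(T)\setminus\{v,w,z\}=V(T')\setminus\{v^{*}\}$ coincide in the two tournaments and both $\DE^{*}$ and $\DE_{T'}(v^{*})$ live in this common ground set, the two families are literally equal and any further choice of pivots acts identically on the two sides. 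Hence $\CS(\DE^{*})=\CS(\Acy(T(I;R_1)))$, completing the proof. The main obstacle I anticipate is carefully justifying the block-equivalence claim $\beta_i\cap\{v,w,z\}=\emptyset$ by inducting along the block sequence $I$; once this is established, the identification $\DE^{*}=\DE_{T'}(v^{*})$ and the remaining bookkeeping are essentially formal.
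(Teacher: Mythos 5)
Your proof is correct and follows essentially the same route as the paper: pivot on $v$, split $\DE(v)$ into $\Sigma(wz:v)$ and the faces containing some $\beta_i$, observe that the latter avoid $\{v,w,z\}$ entirely so the two parts can never be matched against each other, and identify the second part with the post-pivot family for $T(I;R_1)$. The only cosmetic difference is in the last step, where the paper invokes the edge-contraction result (Theorem~\ref{thm:hmtpy-edge}) on the unsaturated edge $wz$ in $T-v$, whereas you identify the two families directly as equal sets on the common ground set $V(T)\setminus\{v,w,z\}=V(T(I;R_1))\setminus\{v^{*}\}$; both amount to the same observation.
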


\begin{proof}
Since we have  $\DE(v)=\langle wz,\beta_1,\beta_2,\ldots,\beta_k \rangle $, after removing all elements of pairs $(\gamma,\gamma\cup \{v\})$ for the pivot $v$, the set of unmatched elements is 
$$ U= \Sigma(wz:v) \cup \bigcup_{i=1}^k \Sigma(\beta_i:v,wz,\beta_1,\ldots,\beta_{i-1}) $$
Note that for each $\gamma \in U$, 
\begin{itemize}
\item if $\{w,z\} \subset \gamma$, then $\beta_i \nsubseteq \gamma$, since $\{w,z\}\cup \beta_i$ is a non-face.
\item if $\{w,z\} \nsubseteq \gamma$, then $w \notin \gamma$ and $z \notin \gamma $, since $ \gamma$ contains some  $\beta_i$ as a subset such that $\{w\}\cup \beta_i$ and $\{z\}\cup\beta_i$ are non-face. 
\end{itemize}
So, for every $\gamma \in \Sigma(wz:v)$ and $\alpha\in U-\Sigma(wz:v)$, we conclude that $\vert \gamma \setminus \alpha \vert \geq 2$. This in turn forces that $\{\gamma,\alpha\}$ can not be a matched pair with respect to any pivot. Therefore, we may write $\CS(\Acy(T))=\CS(\Sigma(wz:v)) \cup \CS(\bigcup_{i=1}^k \Sigma(\beta_i:v,wz,\beta_1,\ldots,\beta_{i-1}))$. \medskip

Since none of $\gamma \in U-\Sigma(wz:v) $ contains any of the vertices in $\{v,w,z\}$, we have that  $$\CS(\bigcup_{i=1}^k \Sigma(\beta_i:v,wz,\beta_1,\ldots,\beta_{i-1}))=\CS(\Acy(T(I; R_1))).$$ 

Indeed, $wz$ is an unsaturated edge in $T-v$  so that $\Acy(T)\simeq\Acy(T) \sslash wz \simeq \Acy(T(I;R_1))$ by Theorem \ref{thm:hmtpy-edge}. Thus, $\CS(\Acy(T))=\CS(\Sigma(wz:v)) \cup \CS(\Acy(T(I;R_1)))$ as claimed. 
\end{proof}

\begin{lemma}\label{lem:cs-sigma-T}
Let $T=R_m(P^1,P^2,\ldots,P^m)$ be a sectionable tournament with $m=2r+1$, and $\CS(\Acy(P^{i}))$ is determined by an acyclic matching $\M_i$ via a pivot set $( q^{i}_1,q^{i}_2,\ldots,q^{i}_{t_i} )$, then 
\begin{align*}
 \CS(\Sigma(wz:q^1_1)_T) & \equiv \CS(\Sigma(wz:q^1_1)_{P^1}) \cupdot \\
& \Bigg(  \bigg[   \Big( \CS( \Acy(P^{r+2})) \cupdot \CS( \Acy(P^{r+3}) ) \cupdot  \ldots  \cupdot \CS(\Acy(P^{2r+1})) \Big) \cup  \\
& \Big( \CS(\Acy(P^{r+3}))  \cupdot  \ldots \cupdot \CS(\Acy(P^{2r+1})) \cupdot \CS(\Acy(P^{2}))\Big)  \cup
\cdots \\
& \cup \Big( \CS(\Acy(P^{2})) \cupdot  \CS(\Acy(P^{3})) \cupdot  \ldots \cupdot \CS(\Acy(P^{r+1}))\Big)  \bigg] 
\cup \\
& \bigg[  \Big(   \CS(\Acy(P^{r+3})) \cupdot  \ldots \cupdot \CS(\Acy(P^{2r+1})) \cupdot \{\{q^2_1\}\}  \Big) \cup  \\
& \Big(    \CS(\Acy(P^{r+4}) ) \cupdot  \ldots \cupdot \CS(\Acy(P^{2r+1})) \cupdot \CS(\Acy(P^{2})) \cupdot \{\{q^3_1\}\} \Big) \cup 
\cdots \\
& \cup \Big(  \CS(\Acy(P^{2}) ) \cupdot  \ldots \cupdot \CS(\Acy(P^{r})) \cupdot \{\{q^{r+1}_1\}\}  \Big)  \bigg] 
\Bigg),
\end{align*}
where $wz\in \DE(q_{1_1})$.
\end{lemma}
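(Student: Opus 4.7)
The plan is to split each $I\in \Sigma(wz:q^1_1)_T$ as $I=I_1\cup \omega$ with $I_1:=I\cap V(P^1)$ and $\omega:=I\cap (V(T)\setminus V(P^1))$, reducing the computation to a dot-join across the individual blocks. Because any dicycle of $T$ is either contained in a single block $P^k$ or else projects under the quotient map $\Gamma_{T,R_{2r+1}}$ to a dicycle of $R_{2r+1}$, the set $I$ is acyclic iff each $I\cap V(P^k)$ is acyclic in $P^k$ and $\mathrm{supp}(I):=\{k: I\cap V(P^k)\neq \emptyset\}$ is acyclic in $R_{2r+1}$. The constraints $\{w,z\}\subseteq I$ and $q^1_1\notin I$ (with $w,z,q^1_1\in V(P^1)$, implicit in the use of $\Sigma(wz:q^1_1)_{P^1}$ on the right-hand side) force $I_1\in \Sigma(wz:q^1_1)_{P^1}$ and $1\in \mathrm{supp}(I)$, so the admissible $\omega$'s are exactly those whose support lies in the non-$1$ part of some maximal transitive extension of $\{1\}$ in $R_{2r+1}$. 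These non-$1$ parts are the $r+1$ cyclic intervals
\[
J_0=\{r+2,r+3,\ldots,2r+1\},\qquad J_j=\{r+2+j,\ldots,2r+1\}\cup \{2,\ldots,j+1\}\ (1\leq j\leq r).
\]
Setting $\Omega:=\bigcup_{j=0}^{r}\Acy(P^{J_j})$, one obtains $\Sigma(wz:q^1_1)_T\equiv \Sigma(wz:q^1_1)_{P^1}\cupdot \Omega$ on disjoint ground sets, and Proposition~\ref{prop:disjoint ground} reduces the lemma to computing $\CS(\Omega)$.

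The next step is to stratify $\Omega$ according to the smallest index $j$ with $\mathrm{supp}(\omega)\subseteq J_j$. Using $J_j\setminus J_{j-1}=\{j+1\}$, one sees that for $j\geq 1$ the stratum $\Omega_j$ equals $\{\omega : \mathrm{supp}(\omega)\subseteq J_j,\ (j+1)\in \mathrm{supp}(\omega)\}$, while $\Omega_0=\Acy(P^{J_0})$. Writing $J_j\setminus \{j+1\}=\{k^j_1,\ldots,k^j_{r-1}\}$, each stratum factors as a dot-join on disjoint ground sets,
\[
\Omega_0=\Acy(P^{r+2})\cupdot\cdots\cupdot \Acy(P^{2r+1}),\qquad \Omega_j=\bigl(\Acy(P^{j+1})\setminus\{\emptyset\}\bigr)\cupdot \Acy(P^{k^j_1})\cupdot\cdots\cupdot \Acy(P^{k^j_{r-1}})\quad (j\geq 1).
\]
For $\Omega_0$, Proposition~\ref{prop:disjoint ground} yields the first term of the first bracket directly. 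For $j\geq 1$, restricting the matching $\M_{j+1}$ to $\Acy(P^{j+1})\setminus \{\emptyset\}$ breaks exactly the initial pair $(\emptyset,\{q^{j+1}_1\})$, so $\CS(\Acy(P^{j+1})\setminus\{\emptyset\})\equiv \CS(\Acy(P^{j+1}))\cup \{\{q^{j+1}_1\}\}$. Distributing this union through the dot-join then splits $\CS(\Omega_j)$ into the $J_j$-term of the first bracket (coming from $\CS(\Acy(P^{j+1}))$) plus the pivot-$q^{j+1}_1$ term of the second bracket.

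Finally, I glue the stratum-matchings into a single acyclic matching on $\Omega$ by iterated application of Lemma~\ref{lem:morse}. The required non-containment condition holds because for $j'<j$ a short check shows $(j+1)\notin J_{j'}$, so any $\tau \in \Omega_j$ has $(j+1)\in \mathrm{supp}(\tau)$ whereas $(j+1)\notin \mathrm{supp}(\sigma)$ for every $\sigma\in \Omega_{j'}$, hence $\tau\not\subseteq \sigma$. The union $\bigcup_j \CS(\Omega_j)$ is then precisely the right-hand parenthesized expression for $\CS(\Omega)$, and combining with the first paragraph yields the lemma. The main technical hurdle will be the combinatorial bookkeeping around the intervals $J_j$: verifying that they are exactly the non-$1$ parts of the maximal transitive extensions of $\{1\}$ in $R_{2r+1}$, that consecutive $J_j$'s differ by a single block so the stratification is clean, and that the two summands of $\CS(\Acy(P^{j+1})\setminus\{\emptyset\})$ separate cleanly through the dot-join into the two listed brackets in the correct cyclic order. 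Once this bookkeeping is fixed, the remainder is a direct assembly of Proposition~\ref{prop:disjoint ground} and Lemma~\ref{lem:morse}.
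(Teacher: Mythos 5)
Your argument is correct and reaches the same decomposition as the paper, but it handles the one genuinely delicate point more carefully than the paper does. Both proofs begin identically: since a set is acyclic in $T$ exactly when each block part is acyclic and its support is transitive in $R_{2r+1}$, the family $\Sigma(wz:q^1_1)_T$ factors as $\Sigma(wz:q^1_1)_{P^1}$ dot-joined with a union of products of $\Acy(P^k)$ over the $r+1$ cyclic intervals complementary to block $1$, and both then push the block matchings $\M_j$ through via Proposition~\ref{prop:disjoint ground} and Lemma~\ref{lem:morse}. The difference is in how the overlaps between the intervals $J_0,\ldots,J_r$ are managed: the paper applies all pivot sequences to the (non-disjoint) union at once and then asserts which elements remain unmatched, justifying the second bracket only by the remark that an element of an $r$-fold product together with $q^{j+1}_1$ is unmatched. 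Your stratification by the minimal interval containing the support makes this precise: since $J_j\setminus J_{j-1}=\{j+1\}$, the stratum $\Omega_j$ is forced to meet $P^{j+1}$ nontrivially, so the factor $\Acy(P^{j+1})\setminus\{\emptyset\}$ appears, and its critical cells under $\M_{j+1}$ are exactly $\CS(\Acy(P^{j+1}))\cup\{\{q^{j+1}_1\}\}$ because deleting $\emptyset$ breaks only the pair $(\emptyset,\{q^{j+1}_1\})$. This explains transparently why the second bracket has $r$ terms (none for $J_0$) and why each carries the first pivot of the ``new'' block of its interval, and your containment check $(j+1)\notin J_{j'}$ for $j'<j$ is precisely what Lemma~\ref{lem:morse} needs to glue the strata. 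The only bookkeeping you flag as remaining --- that the $J_j$ are the non-$1$ parts of the maximal transitive sets through $1$, and that consecutive ones differ in a single index --- is immediate from the definition of a highly regular tournament, so nothing essential is missing.
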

\begin{proof}
Let $\alpha \in \Sigma(wz:q^1_1)_T$ be  an element. Since every (cyclically) ordered set of $(r+1)$ consecutive vertices on a highly regular tournament $R_{2r+1}$ creates a face, the set $\alpha$ is clearly a subset of $\A$, where
\begin{align*}
 \mathcal{A}=  &\Big( \Acy(P^{r+2}) \cupdot  \Acy(P^{r+3}) \cupdot  \ldots\cupdot \Acy(P^{2r+1}) \cupdot \Sigma(wz:q^1_1)_{P^1}\Big) \cup \\
&\Big( \Acy(P^{r+3}) \cupdot  \Acy(P^{r+4}) \cupdot  \ldots \cupdot  \Acy(P^{2r+1}) \cupdot \Sigma(wz:q^1_1)_{P^1} \cupdot  \Acy(P^{2})\Big)  \cup \cdots \\ 
&\cup \Big( \Sigma(wz:q^1_1)_{P^1} \cupdot   \Acy(P^{2}) \cupdot  \ldots \cupdot \Acy(P^{r+1}) \Big).
\end{align*}

Therefore, 
\begin{align*}
 \Sigma(wz:q^1_1)_T & \equiv \Sigma(wz:q^1_1)_{P^1} \cupdot \bigg[  \Big( \Acy(P^{r+2}) \cupdot \Acy(P^{r+3}) \cupdot   \ldots \cupdot \Acy(P^{2r+1}) \Big) \cup  \\
& \Big( \Acy(P^{r+3}) \cupdot  \Acy(P^{r+4}) \cupdot  \ldots \cupdot \Acy(P^{2r+1}) \cupdot \Acy(P^{2})\Big)  
\cdots \\
& \Big( \Acy(P^{2}) \cupdot  \Acy(P^{3}) \cupdot  \ldots \cupdot \Acy(P^{r+1})\Big)  \bigg]. 
\end{align*}

Now, recall that each $\CS(\Acy(P^{i}))=\CS(\Sigma(\emptyset:\emptyset)_{P^{i}})$ is determined by an acyclic matching $\M_i$ via the pivot set $( q^i_1,q^i_2,\ldots,q^i_{t_i} )$. We will find an acyclic matching for the right hand side of the above equivalence.

We write the set of pivots of $\CS(\Acy(P^{r+2})),\CS(\Acy(P^{r+3})),\ldots,\CS(\Acy(P^{r+1}))$ on the family $ \Sigma(wz:q^1_1)_T$ as
$$( q^{r+2}_1,q^{r+2}_2,\ldots,q^{r+2}_{t_{r+2}} ), ( q^{r+3}_1,q^{r+3}_2,\ldots,q^{r+3}_{t_{r+3}} ), \ldots, ( q^{r+1}_1,q^{r+1}_2,\ldots,q^{r+1}_{t_{r+1}} )$$
respectively. After applying these sets consecutively, the resulting created matching is acyclic by Lemma \ref{lem:morse}. We claim that the unmatched elements are $\U\cup \V$, where

\begin{align*}
\U=&\Big( \CS(\Acy(P^{r+2})) \cupdot  \CS(\Acy(P^{r+3})) \cupdot  \ldots \cupdot  \CS(\Acy(P^{2r+1})) \cupdot \CS(\Sigma(wz:q^1_1)_{P^1})\Big) \cup \\
&\Big( \CS(\Acy(P^{r+3})) \cupdot  \CS(\Acy(P^{r+4})) \cupdot  \ldots \cupdot  \CS(\Sigma(wz:q^1_1)_{P^1})\cupdot  \CS(\Acy(P^{2}))\Big)\cup \ldots \\
&\cup \Big( \CS(\Sigma(wz:q^1_1)_{P^1}) \cup \CS(\Acy(P^{2})) \cupdot  \CS(\Acy(P^{3})) \cupdot  \ldots \cupdot \CS(\Acy(P^{r+1}))\Big) \\
\end{align*} 
and
\begin{align*}
\V=&\Big( \CS(\Acy(P^{r+3}))  \cupdot  \ldots  \cupdot \CS(\Acy(P^{2r+1}))  \cupdot \CS(\Sigma(wz:q^1_1)_{P^1}) \cupdot \{\{q^2_1\}\} \Big) \cup \\
&\Big( \CS(\Acy(P^{r+4})) \cupdot   \ldots  \cupdot  \CS(\Sigma(wz:q^1_1)_{P^1})\cupdot  \CS(\Acy(P^{2}))   \cupdot \{\{q^3_1\}\} \Big)\cup \ldots \\
&\cup \Big( \CS(\Sigma(wz:q^1_1)_{P^1}) \cupdot \CS(\Acy(P^{2})) \cupdot  \ldots \cupdot \CS(\Acy(P^{r})) \cupdot \{\{q^{r+1}_1\}\} \Big). 
\end{align*}

In fact, each element $\alpha \in \Acy(P^{r+3}) \cupdot \Acy(P^{r+4}) \cupdot  \ldots \cupdot \Acy(P^{2r+1})\cupdot \CS(\Sigma(wz:q^1_1)_{P^1})$ is matched with $\alpha\cup \{q^{r+3}_1\}$, and the other sets can be treated similarly. It follows that any ordered set of $(r+1)$ elements consisting of $\CS(\Acy(P^{i_1})),\ldots, \CS(\Acy(P^{i_{r+1}}))$ for  some consecutive $i_1\leq \ldots\leq i_{r+1}$ is unmatched. Notice that if one of $P^i$ consists of a unique vertex, then $\CS(\Acy(P^{i}))$ would be $\emptyset$; hence, any ordered set of $(r+1)$ elements containing $\CS(\Acy(P^{i}))$ would be $\emptyset$ by the definition of dot-join operation.  Therefore, $\U$ is the set of unmatched elements. \medskip

On the other hand, consider an element $\alpha$ which consists of ordered set of $r$ elements containing an element of  $\CS(\Sigma(wz:q^1_1)_{P^1})$, say $\alpha \in \Acy(P^{r+3}) \cupdot \Acy(P^{r+4}) \cupdot  \ldots \cupdot \Acy(P^{2r+1})\cupdot \CS(\Sigma(wz:q^1_1)_{P^1})$.  Then $\alpha\cup \{q^2_1\}$ is unmatched; hence, $\V$ is the set of unmatched elements. Thus, we can define a partial Morse matching on such decomposition so that the combination of them is a Morse matching by Lemma \ref{lem:morse}.

Consequently,  $\CS(\Sigma(wz:q^1_1)_T)  \equiv \U \cup \V$. Thus, we obtain the following equivalence:
 \begin{align*}
 \CS(\Sigma(wz:q^1_1)_T) & \equiv \CS(\Sigma(wz:q^1_1)_{P^1}) \cupdot \\
 &\Bigg(  \bigg[   \Big( \CS( \Acy(P^{r+2})) \cupdot \CS( \Acy(P^{r+3}) ) \cupdot  \ldots \cupdot \CS(\Acy(P^{2r+1})) \Big) \cup  \\
& \Big( \CS(\Acy(P^{r+3}))  \cupdot  \ldots \cupdot \CS(\Acy(P^{2r+1})) \cupdot \CS(\Acy(P^{2}))\Big)  \cup
\cdots \\
& \cup \Big( \CS(\Acy(P^{2})) \cupdot  \CS(\Acy(P^{3})) \cupdot  \ldots \cupdot \CS(\Acy(P^{r+1}))\Big)  \bigg] 
\cup \\
& \bigg[  \Big(   \CS(\Acy(P^{r+3})) \cupdot  \ldots \cupdot \CS(\Acy(P^{2r+1})) \cupdot \{\{q^2_1\}\}  \Big) \cup  \\
& \Big(    \CS(\Acy(P^{r+4}) ) \cupdot  \ldots \cupdot \CS(\Acy(P^{2r+1})) \cupdot \CS(\Acy(P^{2})) \cupdot \{\{q^3_1\}\} \Big) \cup 
\cdots \\
& \cup \Big(  \CS(\Acy(P^{2}) ) \cupdot  \ldots \cupdot \CS(\Acy(P^{r})) \cupdot \{\{q^{r+1}_1\}\}  \Big)  \bigg] 
\Bigg).
\end{align*}

\end{proof}

\begin{theorem}\label{thm:cs-tour}
Let $T=R_m(P^1,P^2,\ldots,P^m)$ be a sectionable tournament with $m=2r+1$, and suppose that $\CS(\Acy(P^{j}))$ is determined by an acyclic matching $\M_j$ via the pivot set $( q^j_1,q^j_2,\ldots,q^j_{t_j} )$ for $j\in [m]$, then
\begin{align*}
\CS(\Acy&(T)) \equiv  \{\{q^1_1,q^2_1\}\} \cup  \\
&\bigcup_{ i \in \{1,2,\ldots,m\}}  \Bigg[ \CS(\Acy(P^{i}))\cupdot \ldots \cupdot \CS(\Acy(P^{i+r-1})) \cupdot \bigg( \{\{q^{i+r}_1 \}\} \cup \CS(\Acy(P^{i+r})) \bigg)  \Bigg], 
\end{align*}
where indices of $P^{j}$'s are taken modulo $m$.
\end{theorem}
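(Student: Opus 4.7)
My plan is to induct on $|V(T)|$. For the base case $|V(T)|=m$, every block $P^i$ is a single vertex, so $T\cong R_m$; Lemma~\ref{lem:highly-reg-S1} gives $\CS(\Acy(T))\equiv\{\{a,b\}\}$ for a single edge, while the right-hand side collapses to $\{\{q^1_1,q^2_1\}\}$ because every $\CS(\Acy(P^i))$ is empty and dot-joining with an empty family yields the empty family. Both sides contain exactly one size-two face and nothing else, so they are equivalent under $\equiv$.

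For the inductive step, $|V(T)|>m$ forces some block to be non-trivial. By cyclically relabeling the blocks (both the right-hand side of the theorem and the highly regular structure of $R_m$ are cyclically symmetric up to $\equiv$), one may take $P^1$ to be non-trivial. Since $P^1$ is itself a simply disconnected sectionable tournament, I would arrange the matching $\M_1$ by a recursive application of Lemma~\ref{lem:cs-reduction} inside $P^1$ so that its first pivot $q^1_1$ lies in a deep-triangle $\{q^1_1,w,z\}$ of $P^1$, hence of $T$, carried by some block sequence $I$. Lemma~\ref{lem:cs-reduction} applied to $T$ then splits
\[
\CS(\Acy(T))\equiv\CS(\Sigma(wz:q^1_1)_T)\cup\CS(\Acy(T(I;R_1))).
\]
Lemma~\ref{lem:cs-sigma-T} expresses the first summand as $\CS(\Sigma(wz:q^1_1)_{P^1})$ dot-joined with the union of the two bracketed parts displayed there, and, since $|V(T(I;R_1))|=|V(T)|-2$, the inductive hypothesis applied to $T(I;R_1)=R_m(\tilde P^1,P^2,\ldots,P^m)$ with $\tilde P^1:=P^1(I';R_1)$ supplies the theorem's formula for it, with $\tilde P^1$ and $\tilde q^1_1$ in place of $P^1$ and $q^1_1$.

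To conclude, I would combine these two summands using the identity $\CS(\Acy(P^1))\equiv\CS(\Sigma(wz:q^1_1)_{P^1})\cup\CS(\Acy(\tilde P^1))$ (Lemma~\ref{lem:cs-reduction} applied inside $P^1$) together with the distributivity $(\A\cup\B)\cupdot\C\equiv(\A\cupdot\C)\cup(\B\cupdot\C)$. Expanding the claimed right-hand side for $T$ by splitting each factor $\CS(\Acy(P^1))$ and each ``or''-slot $\{\{q^{i+r}_1\}\}\cup\CS(\Acy(P^{i+r}))$ into its constituent pieces, one should recover Parts~1 and~2 of Lemma~\ref{lem:cs-sigma-T} from the $\CS(\Sigma(wz:q^1_1)_{P^1})$-pieces, and the inductive formula for $T(I;R_1)$ from the $\CS(\Acy(\tilde P^1))$-pieces. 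The exceptional edge $\{\{\tilde q^1_1,q^2_1\}\}$ produced by the induction is equivalent under $\equiv$ to $\{\{q^1_1,q^2_1\}\}$, delivering the exceptional term.

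The main obstacle will be the combinatorial bookkeeping in this combination step: for each index $i\in\{1,\ldots,m\}$ one must pinpoint the position $P^1$ occupies inside the consecutive $(r+1)$-window $P^i,\ldots,P^{i+r}$ and verify that splitting $\CS(\Acy(P^1))$ at that position distributes across exactly one Part~1 or Part~2 term of Lemma~\ref{lem:cs-sigma-T} plus exactly one term of the induction, with all the singletons $\{\{q^j_1\}\}$ aligning. The subtlest case is when $P^1$ falls in the ``or''-slot, corresponding to $i+r\equiv 1$ modulo $m$: the $\{\{q^1_1\}\}$-piece of this slot is absent from Lemma~\ref{lem:cs-sigma-T} and must instead be sourced from the inductive extra edge via the cyclic symmetry of $R_m$.
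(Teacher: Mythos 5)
Your plan follows essentially the same route as the paper's proof: induction on $|V(T)|$, selecting the pivot $q^1_1$ inside a deep-triangle $\{q^1_1,w,z\}$, splitting $\CS(\Acy(T))$ via Lemma~\ref{lem:cs-reduction} into $\CS(\Sigma(wz:q^1_1)_T)\cup\CS(\Acy(T(I;R_1)))$, expanding the first part by Lemma~\ref{lem:cs-sigma-T} and the second by the inductive hypothesis, and then recombining through the identity $\CS(\Acy(P^1))\equiv\CS(\Sigma(wz:q^1_1)_{P^1})\cup\CS(\Acy(P^1))_{T(I;R_1)}$ together with Proposition~\ref{prop:disjoint ground} and distributivity of $\cupdot$ over $\cup$. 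Your treatment is, if anything, more explicit than the paper's about the base case and the index bookkeeping in the final merge.
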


\begin{proof}
Consider $v=q^1_1\in V(P^1)$ as a pivot, then by Lemma \ref{lem:contraction Tr} the set of unmatched elements is 
 $$\DE(v)=\langle wz,\beta_1,\beta_2,\ldots,\beta_k \rangle $$
where  $\{v,w,z\}$ is a deep-triangle corresponding to a block sequence $I=(i_0,i_1,\ldots,i_j)$ with respect to $S_j=\{v,w,z\}$, and  $\{\beta_1,\beta_2,\ldots,\beta_k\}\subset E(T-S_j)$. Note that $\CS(\Acy(T))=\CS(\Sigma(wz:v)) \cup \CS(\Acy(T(I; R_1))) $  by Lemma \ref{lem:cs-reduction}. We apply to an induction on $n=\vert V(T) \vert$. Since the base case is trivial, we suppose for the induction step that the claim is true for sectionable tournaments of size at most $n-1$. Then,
\begin{align*}
\CS(\Acy(T))  \equiv & \CS(\Sigma(wz:q^1_1)_T) \cup \CS(\Acy(T(I; R_1)))  \\
 \equiv & \CS(\Sigma(wz:q^1_1)_T) \cup  \{\{q^1_1,q^2_1\}\} \cup \\ 
& \bigcup_{{ i \in \{1,\ldots,m\}}}   \bigg[ \CS(\Acy(P^{i}))\cupdot \ldots\cupdot \CS(\Acy(P^{i+r-1})) \cupdot \\
& \bigg( \{\{q^{i+r}_1 \}\} \cup \CS(\Acy(P^{i+r})) \bigg)   \bigg]_{T(I; R_1)}  \\  
 \equiv & \CS(\Sigma(wz:q^1_1)_T) \cup  \{\{q^1_1,q^2_1\}\} \cup \\ 
& \bigcup_{ i \in \{1,2,\ldots,m\}}   \bigg[ \CS(\Acy(P^{i}))\cupdot \ldots\cupdot \CS(\Acy(P^{i+r-1})) \cupdot  \{\{q^{i+r}_1 \}\}   \bigg]_{T(I; R_1)}    \\ 
& \bigcup_{{ i \in \{1,2,\ldots,m\}}}   \bigg[ \CS(\Acy(P^{i}))\cupdot \ldots\cupdot \CS(\Acy(P^{i+r-1})) \cupdot  \CS(\Acy(P^{i+r}))   \bigg]_{T(I; R_1)}   \\ 
 \equiv & \CS(\Sigma(wz:q^1_1)_T) \cup  \{\{q^1_1,q^2_1\}\} \cup \\ 
& \bigcup_{{ i \in \{2,3,\ldots,r+2\}}}   \bigg[ \CS(\Acy(P^{i}))\cupdot \ldots\cupdot \CS(\Acy(P^{i+r-1})) \cupdot  \{\{q^{i+r}_1 \}\}      \bigg]  \cup \\ 
& \bigcup_{{ i \in \{2,3,\ldots,r+1\}}}   \bigg[ \CS(\Acy(P^{i}))\cupdot \ldots\cupdot \CS(\Acy(P^{i+r-1})) \cupdot \CS(\Acy(P^{i+r}))     \bigg]  \cup \\ 
& \bigcup_{{ i \in \{r+3,r+4,\ldots,m,1\}}}   \bigg[  \CS(\Acy(P^{i}))\cupdot \ldots\cupdot \CS(\Acy(P^{i+r-1})) \cupdot  \{\{q_{(i+r)_1}\} \}  \bigg] _{T(I; R_1)}  \cup \\  
& \bigcup_{{ i \in \{r+2,r+3,\ldots,m,1\}}}   \bigg[  \CS(\Acy(P^i))  \cupdot \ldots\cupdot \CS(\Acy(P^{i+r-1}))\cupdot \CS(\Acy(P^{i+r}))  \bigg]_{T(I; R_1)}.  
\end{align*}

We obtain the following equivalence by applying Lemma \ref{lem:cs-sigma-T} to the above equivalence. 
\begin{align*}
\CS(\Acy(T)) \equiv &      \CS(\Sigma(wz:q^1_1)_{P^1}) \cupdot \\
&\Bigg(  \bigg[   \Big( \CS( \Acy(P^{r+2})) \cupdot \CS( \Acy(P^{r+3}) ) \cupdot  \ldots \cupdot \CS(\Acy(P^{2r+1})) \Big) \cup  \\
& \Big( \CS(\Acy(P^{r+3}))  \cupdot  \ldots \cupdot \CS(\Acy(P^{2r+1})) \cupdot \CS(\Acy(P^{2}))\Big)  \cup
\cdots \\
& \cup \Big( \CS(\Acy(P^{2})) \cupdot  \CS(\Acy(P^{3})) \cupdot  \ldots \cupdot \CS(\Acy(P^{r+1}))\Big)  \bigg] 
\cup \\
& \bigg[  \Big(   \CS(\Acy(P^{r+3})) \cupdot  \ldots \cupdot \CS(\Acy(P^{2r+1})) \cupdot \{\{q^2_1\}\}  \Big) \cup  \\
& \Big(    \CS(\Acy(P^{r+4}) ) \cupdot  \ldots \cupdot \CS(\Acy(P^{2r+1})) \cupdot \CS(\Acy(P^{2})) \cupdot \{\{q^3_1\}\} \Big) \cup 
\cdots \\
& \cup \Big(  \CS(\Acy(P^{2}) ) \cupdot  \ldots \cupdot \CS(\Acy(P^{r})) \cupdot \{\{q^{r+1}_1\}\}  \Big)  \bigg] 
\Bigg)       
\cup  \{\{q_{1_1},q_{2_1}\}\} \cup \\ 
& \bigcup_{{ i \in \{2,3,\ldots,r+2\}}}   \bigg[ \CS(\Acy(P^{i}))\cupdot \ldots\cupdot \CS(\Acy(P^{i+r-1})) \cupdot  \{\{q^{i+r}_1 \}\}      \bigg]  \cup \\ 
& \bigcup_{{ i \in \{2,3,\ldots,r+1\}}}   \bigg[ \CS(\Acy(P^{i}))\cupdot \ldots\cupdot \CS(\Acy(P^{i+r-1})) \cupdot \CS(\Acy(P^{i+r}))     \bigg]  \cup \\ 
& \bigcup_{{ i \in \{r+3,r+4,\ldots,m,1\}}}   \bigg[  \CS(\Acy(P^{i}))\cupdot \ldots\cupdot \CS(\Acy(P^{i+r-1})) \cupdot  \{\{q^{i+r}_1\} \}  \bigg]_{T(I; R_1)}  \cup \\  
& \bigcup_{{ i \in \{r+2,r+3,\ldots,m,1\}}}   \bigg[  \CS(\Acy(P^i))  \cupdot \ldots\cupdot \CS(\Acy(P^{i+r-1}))\cupdot \CS(\Acy(P^{i+r}))  \bigg]_{T(I; R_1)}   \\
 \equiv &     \bigg(  \CS(\Sigma(wz:q^1_1)_{P^1}) \cup \CS(\Acy(P^1))_{T(I; R_1)}  \bigg) \cupdot \\
 & \Bigg(  \bigg[ \CS( \Acy(P^{r+2})) \cupdot \CS( \Acy(P^{r+3}) ) \cupdot  \ldots \cupdot \CS(\Acy(P^{2r+1})) \Big) \cup  \\
& \Big( \CS(\Acy(P^{r+3}))  \cupdot  \ldots \cupdot \CS(\Acy(P^{2r+1})) \cupdot \CS(\Acy(P^{2}))\Big)  \cup
\cdots \\
& \cup \Big( \CS(\Acy(P^{2})) \cupdot  \CS(\Acy(P^{3})) \cupdot  \ldots \cupdot \CS(\Acy(P^{r+1}))\Big)  \bigg] 
\cup \\
& \bigg[  \Big(   \CS(\Acy(P^{r+3})) \cupdot  \ldots \cupdot \CS(\Acy(P^{2r+1})) \cupdot \{\{q^2_1\}\}  \Big) \cup  \\
& \Big(    \CS(\Acy(P^{r+4}) ) \cupdot  \ldots \cupdot \CS(\Acy(P^{2r+1})) \cupdot \CS(\Acy(P^{2})) \cupdot \{\{q^3_1\}\} \Big) \cup 
\cdots \\
& \cup \Big(  \CS(\Acy(P^{2}) ) \cupdot  \ldots \cupdot \CS(\Acy(P^{r})) \cupdot \{\{q^{r+1}_1\}\}\Big)  \bigg] 
\Bigg)       \cup  \{\{q^1_1,q^2_1\}\} \cup \\ 
& \bigcup_{{ i \in \{2,3,\ldots,r+2\}}}   \bigg[ \CS(\Acy(P^{i}))\cupdot \ldots\cupdot \CS(\Acy(P^{i+r-1})) \cupdot  \{\{q^{i+r}_1 \}\}      \bigg]  \cup \\ 
& \bigcup_{{ i \in \{2,3,\ldots,r+1\}}}   \bigg[ \CS(\Acy(P^{i}))\cupdot \ldots\cupdot \CS(\Acy(P^{i+r-1})) \cupdot \CS(\Acy(P^{i+r}))     \bigg].  
\end{align*}
By Lemma \ref{lem:cs-reduction} together with Proposition \ref{prop:disjoint ground}, we obtain the following
\begin{align*}
\CS(\Acy(T))  \equiv & \{\{q^1_1,q^2_1\}\} \cup  \\
&\bigcup_{{ i \in \{1,\ldots,m\}}}  \Bigg[ \CS(\Acy(P^{i}))\cupdot \ldots\cupdot \CS(\Acy(P^{i+r-1})) \cupdot \bigg( \{\{q^{i+r}_1 \}\} \cup \CS(\Acy(P^{i+r})) \bigg)  \Bigg], 
\end{align*}
which completes the proof.
\end{proof}

\begin{definition}
Let $T=R_m(P^1,P^2,\ldots,P^m)$ be a sectionable tournament. We call the highest dimension of a critical cell occurring with respect to an acyclic matching $\M$ of $\Acy(T)$ as the \emph{depth}
of $T$, denoted by $\depth_{\M}(T)$.  In particular, we omit the subscript $\M$ whenever it is clear from the context. 
\end{definition}

In the rest of this section,  the notation  $\depth(T)$ for a sectionable tournament $T$ refers to the matchings as constructed in Lemmas \ref{lem:highly-reg-S1}, \ref{lem:contraction Tr}, \ref{lem:cs-reduction}, \ref{lem:cs-sigma-T}, Theorems \ref{thm:cs-tour} and \ref{thm:depth}.

We call a sequence $d_1,\ldots,d_k$ of non-negative integers a \emph{non-zero sequence} if $d_i\geq 1$ for each $i\in [k]$, and otherwise, it is said to be \emph{trivial sequence}.

\begin{theorem}\label{thm:depth}
Let $T=R_m(P^1,P^2,\ldots,P^m)$ be a sectionable tournament with $m=2r+1$ such that $\depth(P^i)=d_i$ for each $i\in [m]$. Then,
\[ 
\depth(T)=\max_{{ i \in \{1,\ldots,m\}}} \{1,d_i+d_{i+1}+\ldots+d_{i+r}+r: \textnormal{either $d_{i},\ldots,d_{i+r-1}$  or  $d_{i+1},\ldots,d_{i+r}$ are non-zero}\}
\]
where the index of $d_{j}$ is taken modulo $m$ for each $1\leq j \leq 2r+1$.
\end{theorem}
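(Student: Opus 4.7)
The plan is to derive the depth formula directly from the explicit description of the critical cells provided by Theorem \ref{thm:cs-tour}, exploiting the elementary fact that for non-empty families $\A_1,\ldots,\A_k$ on pairwise disjoint ground sets, the top dimension appearing in $\A_1 \cupdot \cdots \cupdot \A_k$ equals $\sum_{s=1}^{k} \dim(\A_s) + (k-1)$. A preliminary reduction, proved by induction on $|V(T)|$ together with Theorem \ref{thm:cs-tour}, establishes that $d_j = 0$ if and only if $\CS(\Acy(P^j)) = \emptyset$ if and only if $P^j$ is transitive; indeed, for any non-transitive sectionable block, Theorem \ref{thm:cs-tour} always produces the critical cell $\{q^1_1, q^2_1\}$ of dimension $1$. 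This equivalence identifies the ``non-zero'' clauses in the theorem's formula with ``non-empty factors in the dot-join''.

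Decomposing $\CS(\Acy(T))$ via Theorem \ref{thm:cs-tour} yields the singleton $\{\{q^1_1, q^2_1\}\}$ together with, for each $i\in\{1,\ldots,m\}$, the two subfamilies
\[
A_i = \CS(\Acy(P^i)) \cupdot \cdots \cupdot \CS(\Acy(P^{i+r-1})) \cupdot \{\{q^{i+r}_1\}\},
\]
\[
B_i = \CS(\Acy(P^i)) \cupdot \cdots \cupdot \CS(\Acy(P^{i+r-1})) \cupdot \CS(\Acy(P^{i+r})).
\]
Using the dimension principle above and the preliminary reduction, $A_i$ is non-empty exactly when $d_i,\ldots,d_{i+r-1}$ are non-zero, in which case its top dimension is $d_i+\cdots+d_{i+r-1}+r$; and $B_i$ is non-empty exactly when $d_i,\ldots,d_{i+r}$ are non-zero, in which case its top dimension is $d_i+\cdots+d_{i+r}+r$.

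For the upper bound, every critical cell lies in one of these three classes. The cell $\{q^1_1, q^2_1\}$ contributes $1$; any cell in a non-empty $A_i$ contributes at most $d_i+\cdots+d_{i+r-1}+r \leq d_i+\cdots+d_{i+r}+r$, with the defining condition $d_i,\ldots,d_{i+r-1}$ non-zero matching the first alternative of the formula at index $i$; any cell in a non-empty $B_i$ contributes exactly $d_i+\cdots+d_{i+r}+r$, and its non-emptiness implies both alternatives of the formula at index $i$. For the lower bound, I would realise each value in the formula's maximum as the dimension of an explicit critical cell. The value $1$ is realised by $\{q^1_1, q^2_1\}$. If the first alternative $d_i,\ldots,d_{i+r-1}$ non-zero holds at index $i$, then $B_i$ supplies a witness of dimension $d_i+\cdots+d_{i+r}+r$ whenever $d_{i+r}>0$, and when $d_{i+r}=0$ the family $A_i$ still provides a cell of dimension $d_i+\cdots+d_{i+r-1}+r$ which coincides with $d_i+\cdots+d_{i+r}+r$ since the last summand vanishes. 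If only the second alternative $d_{i+1},\ldots,d_{i+r}$ non-zero holds at index $i$, then necessarily $d_i=0$, and $A_{i+1}$ yields a cell of dimension $d_{i+1}+\cdots+d_{i+r}+r=d_i+\cdots+d_{i+r}+r$.

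The main obstacle I anticipate is precisely this index bookkeeping. The symmetric ``either/or'' clause in the formula silently absorbs any vanishing $d_j$ into the sum $d_i+\cdots+d_{i+r}+r$, while the actual witness cells come from three different subfamilies ($A_i$, $B_i$, or $A_{i+1}$) spread across two different term-indices. Carefully matching each formula index to the correct witness, without gaps and without conflating the cases when $d_i=0$ versus $d_{i+r}=0$, is the delicate point; once the case analysis above is set up, the identification of $\depth(T)$ with the claimed maximum follows immediately from the dot-join dimension computation.
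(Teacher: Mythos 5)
Your proof is correct and follows essentially the same route as the paper's: both read off $\depth(T)$ from the critical-cell decomposition of Theorem~\ref{thm:cs-tour} together with additivity of top dimension under the dot-join of non-empty families on disjoint ground sets, using the observation that a factor $\CS(\Acy(P^j))$ is empty exactly when $d_j=0$. Your explicit matching of each formula index to a witness cell in $A_i$, $B_i$, or $A_{i+1}$ is in fact more careful than the paper's rather terse handling of the either/or clause.
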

\begin{proof}
By Theorem \ref{thm:cs-tour}, we have the following equivalence
\begin{align*}
\CS(\Acy(T))  \equiv & \{\{q^1_1,q^2_1\}\} \cup  \\
&\bigcup_{{ i \in \{1,\ldots,m\}}}  \Bigg[ \CS(\Acy(P^{i}))\cupdot \ldots\cupdot \CS(\Acy(P^{i+r-1})) \cupdot \bigg( \{\{q^{i+r}_1 \}\} \cup \CS(\Acy(P^{i+r})) \bigg)  \Bigg]. 
\end{align*}
Recall that if one of two families of sets $\A$ and $\B$ is trivial, i.e., it is an empty set, then $\A \cupdot \B = \emptyset $ by  the definition of dot-join operation.  It follows that the size of the highest dimension of the critical cells occurring in $\CS(\Acy(T))$ comes from  either  consecutive $r+1$ non-zero $d_i$'s  or  consecutive $r$ non-zero $d_i$'s. 
On the other hand, if $\depth(P^i)=d_i$ and $\depth(P^j)=d_j$, then for some Morse matching, there exist critical cells $\alpha=\{c_1,c_2\ldots,c_{d_i+1}\}$  and $\beta=\{f_1,f_2\ldots,f_{d_j+1}\}$ in $\CS(\Acy(P^{i}))$ and $\CS(\Acy(P^{j}))$ respectively.  
Since  $\{c_1,c_2\ldots,c_{d_i+1}\} \uplus \{f_1,f_2\ldots,f_{d_j+1}\}=\{ c_1,c_2\ldots,c_{d_i+1}, f_1,f_2\ldots,f_{d_j+1}\}$ is a critical cell of size $d_i+d_j+1$ in $\CS(\Acy(P^{i})) \cupdot \CS(\Acy(P^{j}))$, the highest dimension of the critical cells in CW-decomposition of $\Acy(T)$ will be $d_i+d_{i+1}+\ldots+d_{i+r}+r$ for $i \in \{1,2,\ldots,m\}$ $(\mod (2r+1))$, when either $d_{i},d_{i+1},\ldots,d_{i+r-1}$  or  $d_{i+1},d_{i+2},\ldots,d_{i+r}$ are non-zero. 
\end{proof}

\begin{example}
For the sectionable tournament $T=R_5(R_1,R_1,R_3,R_3,R_1)$ depicted in Figure \ref{fig.ex-acy}, we have already obtained $\Acy(T)$ is homotopy equivalent to a CW-complex with one cell in each dimension $0,1$ and $4$ in Example \ref{ex:t5}. Alternatively, we can find the same result by applying to Theorem \ref{thm:cs-tour}.  We first let
\begin{align*}
\CS(\Acy(P^{1}))=&\CS(\Acy(R_{1}))=\emptyset \quad  \ \textnormal{ (via the pivot }  q^1_1=1) \\
\CS(\Acy(P^{2}))=&\CS(\Acy(R_{1}))=\emptyset \quad  \ \textnormal{ (via the pivot }  q^2_1=2) \\
\CS(\Acy(P^{3}))=&\CS(\Acy(R_{3}))=\{\{4,5\}\} \quad  \ \textnormal{ (via the pivot }  q^3_1=3) \\
\CS(\Acy(P^{4}))=&\CS(\Acy(R_{3}))=\{\{7,8\}\}  \quad  \ \textnormal{ (via the pivot }  q^4_1=6) \\
\CS(\Acy(P^{5}))=&\CS(\Acy(R_{1}))=\emptyset \quad  \ \textnormal{ (via the pivot }  q^5_1=9) 
\end{align*}
and therefore
\begin{align*}
\CS(\Acy(T))  \equiv & \{\{q^1_1,q^2_1\}\} \cup  \bigg( \CS(\Acy(P^{3}))\cupdot \CS(\Acy(P^{4})) \cupdot \{\{q^5_1 \}\}   \bigg) \\
              \equiv & \{\{1,2\}\} \cup  \bigg( \{\{4,5\}\}  \cupdot \{\{7,8\}\} \cupdot  \{\{9 \}\}   \bigg) \\
              \equiv & \big\{\{1,2\}\big\} \cup  \big\{\{ 4,5,7,8,9\} \big\} 
\end{align*}
Hence, $\Acy(T)$ is homotopy equivalent to a CW-complex with one cell in each of the dimensions $0, 1$ and $4$.\medskip
\end{example}

\begin{definition}
A sectionable tournament $T=R_m(P^1,P^2,\ldots,P^m)$, $T$ is called an \emph{elementary sectionable tournament} if $m=2r+1$ and there does not exist (cyclically) $i \in [m]$ such that tournaments $P^i,P^{i+1},\ldots,P^{i+r-1}$ are non-transitive.   
\end{definition}
Notice that every highly regular tournament is an elementary sectionable tournament.\medskip

For an elementary sectionable tournament $T$, we will show that $\Acy(T)\simeq S^1$ in Lemma \ref{lem:weakly-s1}. 

\begin{lemma}\label{lem:weakly-s1}
If $T=R_m(P^1,P^2,\ldots,P^m)$ is an elementary sectionable tournament, then $\Acy(T)\simeq S^1$.
\end{lemma}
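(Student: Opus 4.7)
The plan is to apply Theorem~\ref{thm:cs-tour} and argue that the elementary hypothesis forces every term in the big union to vanish, leaving a single $1$-dimensional critical cell. First I would unpack what the elementary condition says at the level of critical cells. By Remark~\ref{rem:Rs-R3}, any transitive block $P^j$ may be treated as $R_1$; with the natural pivot $q^j_1$ matched with $\emptyset$, the whole of $\Acy(P^j)$ is perfectly matched, so $\CS(\Acy(P^j))=\emptyset$. Consequently, the hypothesis that no cyclic index $i$ makes all of $P^i,P^{i+1},\ldots,P^{i+r-1}$ non-transitive translates into: for every $i\in [m]$, at least one of $\CS(\Acy(P^i)),\ldots,\CS(\Acy(P^{i+r-1}))$ is empty.

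Next, from Theorem~\ref{thm:cs-tour},
\[
\CS(\Acy(T))\equiv \{\{q^1_1,q^2_1\}\}\cup \bigcup_{i=1}^{m}\Big[\CS(\Acy(P^{i}))\cupdot\cdots\cupdot \CS(\Acy(P^{i+r-1}))\cupdot\bigl(\{\{q^{i+r}_1\}\}\cup \CS(\Acy(P^{i+r}))\bigr)\Big].
\]
Because the dot-join $\cupdot$ annihilates as soon as any factor is empty, the observation from the previous paragraph makes the first $r$ factors in each bracket collapse to $\emptyset$, independently of whether one chooses $\{\{q^{i+r}_1\}\}$ or $\CS(\Acy(P^{i+r}))$ from the last factor. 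Thus every term of the union is empty, and $\CS(\Acy(T))\equiv \{\{q^1_1,q^2_1\}\}$: the Morse matching produces exactly one critical cell, of dimension~$1$.

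Finally, Corollary~\ref{cor:Forman} (applied with $i=1$, $j=1$) then yields that $\Acy(T)$ is homotopy equivalent to a wedge of a single $1$-sphere, i.e.\ $\Acy(T)\simeq S^1$, as required. The only real ``obstacle'' is a small piece of bookkeeping: one must match the combinatorial elementary condition (consecutive $r$ non-transitive blocks are forbidden) against the precise shape of each term in Theorem~\ref{thm:cs-tour}, noticing that only the initial $r$ factors matter for emptiness, since the $(r{+}1)$-st factor is never empty thanks to the pivot singleton $\{\{q^{i+r}_1\}\}$.
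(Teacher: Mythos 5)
Your proposal is correct and follows essentially the same route as the paper: both invoke Theorem~\ref{thm:cs-tour} and observe that the elementary hypothesis forces every dot-join term to contain an empty factor (a transitive block has empty critical set), leaving only the single $1$-cell $\{\{q^1_1,q^2_1\}\}$. Your version is in fact slightly more explicit than the paper's about why the first $r$ factors alone govern the vanishing, but the argument is the same.
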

\begin{proof}
Let $T=R_m(P^1,P^2,\ldots,P^m)$ be a sectionable tournament with $m=2r+1$ such that $\depth(P^i)=d_i$. By Theorem \ref{thm:cs-tour}, the highest dimension of critical cells in the CW-decomposition of $\Acy(T)$ comes from  either  consecutive $r+1$ non-zero $d_i$'s or  consecutive $r$ non-zero $d_i$'s.  Since there is no  (cyclically) $i \in [m]$ such that tournaments $P^i,P^{i+1},\ldots,P^{i+r-1}$ are non-transitive, the only critical cell in the corresponding CW-decomposition of $\Acy(T)$ is  $\{\{q_{1_1},q_{2_1}\}\}$ for some Morse matching, that is, $\Acy(T)\simeq S^1$.
\end{proof}

\begin{definition}
Let $T=R_m(P^1,P^2,\ldots,P^m)$ be a sectionable tournament. We call the greatest integer $k$ as \emph{width} of $T$, denoted by $\width(T)$  if there exists $j\in [m]$ such that (cyclically) every block in the sequence $P^j,P^{j+1},\ldots,P^{j+k-1}$ is non-transitive.
\end{definition}

We remark that if a sectionable tournament $T=R_m(P^1,P^2,\ldots,P^m)$ is non-elementary, then $\width(T)\geq r$. On the contrary,  $\width(T)\leq r-1$ whenever $T$ is an elementary sectionable tournament. Furthermore, $\width(T)=0$ if and only if every block $P^i$ in $T=R_m(P^1,P^2,\ldots,P^m)$
is transitive. Obviously, if $T$ is a highly regular tournament, then $\width(T)=0$.

\begin{corollary}
Let $T=R_m(P^1,P^2,\ldots,P^m)$ be a sectionable tournament with $m=2r+1$. If $\width(T)\leq r-1$, then  $\Acy(T)\simeq S^1$.
\end{corollary}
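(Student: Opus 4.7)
The plan is to show that the hypothesis $\width(T)\leq r-1$ is exactly the condition of $T$ being an elementary sectionable tournament, so that the statement reduces immediately to Lemma~\ref{lem:weakly-s1}.

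First, I would unpack the definition of width. By definition, $\width(T)$ is the largest $k$ for which some cyclically consecutive run $P^j,P^{j+1},\ldots,P^{j+k-1}$ consists entirely of non-transitive blocks. Therefore, the hypothesis $\width(T)\leq r-1$ says precisely that there is no index $j\in[m]$ for which the $r$ consecutive blocks $P^j,P^{j+1},\ldots,P^{j+r-1}$ (taken cyclically) are all non-transitive; for if such a $j$ existed, one would obtain $\width(T)\geq r$, contradicting the hypothesis. This is exactly the equivalence already noted in the remark just before the statement.

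Next, I would compare this with the definition of an elementary sectionable tournament: $T=R_m(P^1,\ldots,P^m)$ is elementary when there does not exist a cyclic index $i\in[m]$ with $P^i,P^{i+1},\ldots,P^{i+r-1}$ all non-transitive. Thus the two conditions coincide, i.e.\ $\width(T)\leq r-1$ if and only if $T$ is elementary. Invoking Lemma~\ref{lem:weakly-s1}, which asserts $\Acy(T)\simeq S^1$ for every elementary sectionable tournament, concludes the argument. There is no genuine obstacle here: the corollary is simply a restatement of Lemma~\ref{lem:weakly-s1} in the language of the width invariant, and the only matter requiring attention is the straightforward verification that the two combinatorial conditions on the sequence of blocks are equivalent.
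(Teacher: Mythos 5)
Your proof is correct and matches the paper's intended argument exactly: the paper states this corollary without proof, immediately after the remark that $\width(T)\leq r-1$ characterizes elementary sectionable tournaments, so the result is indeed just Lemma~\ref{lem:weakly-s1} rephrased via the width invariant. Your verification that the two combinatorial conditions on consecutive non-transitive blocks coincide is the only content needed, and you have supplied it accurately.
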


The gap between $\depth(T)$ and $\dim(T)$ can be arbitrarily large for sectionable tournament $T$. Indeed, let us consider an elementary sectionable tournament $T=R_m(P^1,P^2,\ldots,P^m)$ with $|T|=n$ and $m=2r+1\geq 5$ such that $P^{2i-1}=R_1$ and $P^{2i}=R_3$ for each $i \in [r]$ and $P^m=R_1$. By Lemma \ref{lem:weakly-s1}, we have that $\Acy(T)\simeq S^1$ so that $\depth(T)=1$. On the other hand, we may calculate the dimension of $T$ by applying to Proposition \ref{prop:dim}.
$$\dim(T)=2\bigg\lceil\frac{r+1}{2}\bigg\rceil+\bigg\lfloor\frac{r+1}{2}\bigg\rfloor-1.$$

We next consider a non-elementary sectionable tournament $T=R_m(P^1,P^2,\ldots,P^m)$ with $|T|=n$ and $m=2r+1$ in which $P^1=R_3,,P^2=R_3,\ldots,P^{r+1}=R_3$, $P^{r+2}=R_1$,  $P^{r+3}=R_m(R_3,R_1,R_1,\ldots,R_1)$, and $P^{r+4}=R_1,P^{r+5}=R_1,\ldots,P^{2r+1}=R_1$. By Theorem \ref{thm:depth}, we have $$\depth(T)=\depth(P^1)+\depth(P^2)+\ldots+\depth(P^{r+1})+r=2r+1.$$ Moreover, the dimension of $\Acy(T)$ comes from the largest block $P^{r+3}$ because of $\dim(P^{r+3})=r+1$. By applying Proposition \ref{prop:dim}, we conclude that $$\dim(T)=\dim(P^3)+\dim(P^4)+\ldots+\dim(P^{r+3})+r=3r.$$

\begin{corollary}\label{cor:dept-dim-tri}
For every $k\geq1$, there exists a sectionable tournament $T$ such that $\dim(T)=\depth(T)+k$. 
\end{corollary}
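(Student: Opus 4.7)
The plan is to exhibit, for every $k\geq 1$, a single explicit sectionable tournament achieving the prescribed gap. Although the two examples discussed immediately above the corollary already indicate that $\dim$ and $\depth$ can drift apart, they actually miss the case $k=1$; a uniform witness that realises every $k\geq 1$ (and is much simpler than either of those examples) is provided by the highly regular tournaments themselves.

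For each $k\geq 1$, I would take $T:=R_{2k+3}$, viewed as the sectionable tournament $R_m(R_1,R_1,\ldots,R_1)$ with $m=2k+3$ and $r=k+1$. Since every block is a singleton tournament $R_1$, which is transitive and hence sectionable, $T$ is sectionable by definition.

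First I would compute $\depth(T)$: every block of $T$ is transitive, so $\width(T)=0\leq r-1$, making $T$ an elementary sectionable tournament. Lemma \ref{lem:weakly-s1} then gives $\Acy(T)\simeq S^{1}$, and more concretely the acyclic matching constructed in Lemma \ref{lem:highly-reg-S1} leaves exactly one critical cell of dimension one. Thus $\depth(T)=1$. Next I would compute $\dim(T)$ by Proposition \ref{prop:dim}: with all $d_i=\dim(R_1)=0$, the formula collapses to
\[
\dim(T)=\max_{i\in\{1,\ldots,m\}}\{d_i+d_{i+1}+\cdots+d_{i+r}+r\}=r=k+1.
\]
Combining these two calculations, $\dim(T)-\depth(T)=(k+1)-1=k$, as required.

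I do not foresee any genuine obstacle. The only subtlety worth flagging is that the non-elementary construction sketched just above the corollary produces gap $r-1$ only for $r\geq 4$ and gap $2$ for $r\in\{2,3\}$, so it does not in fact realise $k=1$ by itself; that is why I prefer the uniform family $\{R_{2k+3}\}_{k\geq 1}$, which reduces everything to the already-established Lemma \ref{lem:weakly-s1} and Proposition \ref{prop:dim}.
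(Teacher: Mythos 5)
Your argument is correct, but it follows a genuinely different route from the paper's. The paper obtains the corollary from the two explicit families discussed immediately before it, in particular the non-elementary family $T=R_m(P^1,\ldots,P^m)$ with $P^1=\cdots=P^{r+1}=R_3$, $P^{r+2}=R_1$, $P^{r+3}=R_m(R_3,R_1,\ldots,R_1)$ and the remaining blocks trivial, for which it computes $\depth(T)=2r+1$ and $\dim(T)=3r$, hence a gap of $r-1$. Your witness $T=R_{2k+3}$ is simpler and entirely self-contained: $\depth(T)=1$ comes straight from Lemma~\ref{lem:highly-reg-S1} (equivalently Lemma~\ref{lem:weakly-s1}, since $\width(T)=0$), $\dim(T)=r=k+1$ from Proposition~\ref{prop:dim} with all $d_i=0$, and the gap is exactly $k$. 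Your construction also buys something substantive: you are right that the paper's family misses $k=1$. For $r=2$ the window $P^{r+3},P^1,P^2$ gives $\dim(T)\geq (r+1)+1+1+r=2r+3=7$ rather than the claimed $3r=6$ (in general $\dim(T)=\max\{3r,\,2r+3\}$), so the gap at $r=2$ is $2$, not $1$; the family realises the gap $r-1$ only for $r\geq 3$ (your restriction to $r\geq 4$ is slightly too cautious, since $r=3$ also yields gap $r-1=2$, but the essential point stands). The uniform family $\{R_{2k+3}\}_{k\geq 1}$ closes this small hole while relying only on results already established in the paper.
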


In particular, $\dim(T)$ and $\depth(T)$ coincide for a $3$-sectionable tournament $T$ (compare to \cite{zd}).

\begin{corollary}
$\depth(T)=\dim(T)$ for every $3$-sectionable tournament $T$.
\end{corollary}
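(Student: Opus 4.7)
The plan is to induct on the recursive structure of a 3-sectionable tournament $T$, after applying the leaf reductions of Remark~\ref{rem:Rs-R3} so that every transitive subblock is replaced by $R_1$ (this preserves the homotopy type of $\Acy(T)$, and in particular both $\dim$ and $\depth$). The base case is then $T = R_1$, where $\Acy(T)$ is a single point and $\dim(T) = \depth(T) = 0$.

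For the inductive step, write $T = R_3(P^1, P^2, P^3)$ with each $P^i$ a 3-sectionable tournament of strictly smaller size, and set $d_i := \dim(P^i) = \depth(P^i)$ via the induction hypothesis. Specializing Proposition~\ref{prop:dim} and Theorem~\ref{thm:depth} to $r = 1$ yields
$$\dim(T) = \max_{i \in \{1,2,3\}} \bigl(d_i + d_{i+1} + 1\bigr)$$
and
$$\depth(T) = \max\Bigl(\{1\} \cup \bigl\{d_i + d_{i+1} + 1 : i \in \{1,2,3\},\ d_i \neq 0 \text{ or } d_{i+1} \neq 0\bigr\}\Bigr),$$
where indices are read modulo $3$.

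The remaining work is a short case analysis comparing these two maxima. If $d_1 = d_2 = d_3 = 0$, both formulas evaluate to $1$. Otherwise some $d_j \geq 1$, so $d_{j-1} + d_j + 1 \geq 2$ is already a valid entry of the depth formula (the non-vanishing side condition is met via $d_j$). In this case, any index $i^*$ attaining the dim maximum must satisfy $d_{i^*} \neq 0$ or $d_{i^*+1} \neq 0$: otherwise the value would be $1$, strictly less than $d_{j-1} + d_j + 1 \geq 2$, contradicting maximality. Consequently the dim-maximizer also appears in the depth max, giving $\depth(T) \geq \dim(T)$; the reverse inequality is immediate because every candidate of the depth max is either $1 \leq \dim(T)$ (since $\dim(T) \geq 1$ always) or of the form $d_i + d_{i+1} + 1 \leq \dim(T)$ by definition.

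I do not anticipate a genuine obstacle: the corollary reduces to algebraic manipulation of the explicit formulas of Proposition~\ref{prop:dim} and Theorem~\ref{thm:depth} at $r = 1$, exploiting that the non-vanishing side condition present in the depth formula is vacuous precisely when the corresponding sum is $\geq 2$, which is exactly when that sum can dominate the default term $1$. The only point requiring care is to apply the reduction of Remark~\ref{rem:Rs-R3} consistently, so that the identity $\dim(P^i) = \depth(P^i)$ is well-posed at every node of the recursion.
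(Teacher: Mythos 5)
Your overall strategy --- specialize Proposition~\ref{prop:dim} and Theorem~\ref{thm:depth} to $r=1$ and compare the two maxima by a case analysis on whether all $d_i$ vanish --- is exactly how the paper intends this corollary to be read (it is stated without proof immediately after those two results), and that part of your argument is correct: when all $d_i=0$ both formulas give $1$, and when some $d_j\geq 1$ the dim-maximizing index automatically satisfies the non-vanishing side condition of the depth formula, so the two maxima agree.

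The step that fails is the parenthetical justification of your normalization: you claim that replacing every transitive subblock by $R_1$ ``preserves the homotopy type of $\Acy(T)$, and in particular both $\dim$ and $\depth$.'' Homotopy equivalence does \emph{not} preserve the dimension of a simplicial complex, and here the discrepancy is not cosmetic. Take $T=R_3(P,R_1,R_1)$ with $P$ a transitive tournament on $n\geq 2$ vertices: then $\Acy(T)\simeq S^1$ and $\depth(T)=1$, while the largest acyclic set is $V(P)$ together with the block it dominates, so $\dim(T)=n$ (consistent with Proposition~\ref{prop:dim}, since $\dim(P)=n-1$). Thus the literal statement of the corollary is false for such $T$, and your reduction step is precisely where this is swept under the rug. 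The statement only holds under the standing convention of Remark~\ref{rem:Rs-R3} that every transitive subblock has already been collapsed to $R_1$ (so that $\dim(P^i)=\depth(P^i)$ is forced at the leaves); under that convention your normalization is vacuous and the rest of your induction goes through verbatim. You should invoke that convention explicitly rather than appeal to a homotopy invariance of $\dim$ that does not exist.
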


We finally characterize when $\dim(T)$ and $\depth(T)$ coincide in general. 

\begin{proposition}\label{prop:}
Let  $T=R_m(P^1,P^2,\ldots,P^m)$ be a sectionable tournament with $m=2r+1$ and let $\depth(P^i)=d_i$ for each $i\in [2r+1]$. Suppose $\dim(T)=d_j+d_{j+1}+\ldots+d_{j+r}+r$ for some $j\in [2r+1]$ $(\mod (2r+1))$. Then $\depth(T)=\dim(T)$ if and only if   $\min\{d_{j+1},d_{j+2},\ldots,d_{j+r-1}\}\geq 1$ and $\max\{d_{j},d_{j+r}\}\geq 1$.
\end{proposition}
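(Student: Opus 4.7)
The plan is to juxtapose Theorem~\ref{thm:depth}, which writes $\depth(T)$ as the maximum over \emph{admissible} indices $i$ of the quantity $d_i+d_{i+1}+\cdots+d_{i+r}+r$, with Proposition~\ref{prop:dim}, which writes $\dim(T)$ as the same-shape maximum over \emph{all} indices with $d_i$ replaced by $e_i:=\dim(P^i)$. Every critical cell of $\Acy(P^i)$ is a face of that complex, so $d_i\leq e_i$ for each $i$, and consequently
\[
\depth(T)\;\leq\;\max_i\bigl(d_i+\cdots+d_{i+r}+r\bigr)\;\leq\;\max_i\bigl(e_i+\cdots+e_{i+r}+r\bigr)\;=\;\dim(T).
\]
Moreover, the hypothesis $\dim(T)=d_j+\cdots+d_{j+r}+r$ collapses this right-hand chain at $i=j$: it forces $d_{j+\ell}=e_{j+\ell}$ for every $\ell\in\{0,1,\ldots,r\}$ and places $j$ among the indices realizing the $\dim$-maximum of Proposition~\ref{prop:dim}.

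For the backward implication, assume $\min\{d_{j+1},\ldots,d_{j+r-1}\}\geq 1$ and $\max\{d_j,d_{j+r}\}\geq 1$. If $d_j\geq 1$ then $d_j,d_{j+1},\ldots,d_{j+r-1}$ is a non-zero sequence; otherwise $d_{j+r}\geq 1$ and $d_{j+1},\ldots,d_{j+r}$ is a non-zero sequence. Either way, $i=j$ is admissible in the maximum of Theorem~\ref{thm:depth}, giving
\[
\depth(T)\;\geq\;d_j+d_{j+1}+\cdots+d_{j+r}+r\;=\;\dim(T),
\]
which combined with the background bound $\depth(T)\leq\dim(T)$ yields the desired equality.

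For the forward implication, assume $\depth(T)=\dim(T)$. By Theorem~\ref{thm:depth}, $\depth(T)=d_{i^\ast}+\cdots+d_{i^\ast+r}+r$ for some admissible index $i^\ast$; the degenerate alternative $\depth(T)=1$ only occurs when $\dim(T)=1$, which forces $T=R_3$ with each $P^i$ trivial, and that special case is handled directly. The chain
\[
d_{i^\ast}+\cdots+d_{i^\ast+r}+r\;\leq\;e_{i^\ast}+\cdots+e_{i^\ast+r}+r\;\leq\;\dim(T)\;=\;\depth(T)\;=\;d_{i^\ast}+\cdots+d_{i^\ast+r}+r
\]
must then collapse to equalities, so $i^\ast$ itself realizes the $\dim$-maximum in Proposition~\ref{prop:dim}. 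Relabelling $j:=i^\ast$, the admissibility condition at $i^\ast$ translates exactly into $\min\{d_{j+1},\ldots,d_{j+r-1}\}\geq 1$ and $\max\{d_j,d_{j+r}\}\geq 1$.

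The main technical subtlety is that several indices may simultaneously realize the $\dim$-maximum in Proposition~\ref{prop:dim} while only some satisfy the admissibility condition of Theorem~\ref{thm:depth}; the equivalence in the proposition is therefore most naturally read as asserting the existence of a \emph{common} index $j$ which is both $\dim$-maximal and depth-admissible, and the forward argument locates exactly such an index by matching the two maxima along the collapsed inequality chain.
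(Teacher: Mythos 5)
Your proof is correct and follows essentially the same route as the paper's: both directions reduce to matching the admissibility condition in Theorem~\ref{thm:depth} against the window realizing the maximum in Proposition~\ref{prop:dim}. You are, however, noticeably more careful than the paper in two respects. First, you explicitly establish the background inequality $\depth(T)\leq\dim(T)$ via $\depth(P^i)\leq\dim(P^i)$ and observe that the hypothesis forces termwise equality $d_{j+\ell}=\dim(P^{j+\ell})$ along the window at $j$; the paper uses both facts silently (its converse direction jumps straight from ``$j$ is admissible'' to ``$\depth(T)=\dim(T)$''). Second, you correctly flag the quantifier subtlety in the forward direction: the admissible index $i^\ast$ produced by Theorem~\ref{thm:depth} need not coincide with the given $j$, and one can build examples (e.g.\ a $5$-sectionable $T$ with depth vector $(2,0,2,1,1)$) where $\depth(T)=\dim(T)$, the window at $j=1$ realizes the dimension, yet $d_{j+1}=0$; so the statement is only true under the existential reading you adopt, and the paper's proof elides exactly this point by switching from $i$ to $j$ mid-sentence. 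One caveat: your claim that the degenerate alternative $\depth(T)=1$ is ``handled directly'' is not substantiated, and in fact it cannot be --- for $T=R_3$ one has $\depth(T)=\dim(T)=1$ while $\max\{d_j,d_{j+r}\}=0$, so that case is a genuine exception to the stated equivalence rather than something to handle; this defect is shared by (and unacknowledged in) the paper's own proof.
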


\begin{proof}
Let  $T=R_m(P^1,P^2,\ldots,P^m)$ be a sectionable tournament such that $\dim(P^i)=d_i$ for each $i\in [2r+1]$. We suppose that $\dim(T)=d_j+d_{j+1}+\ldots+d_{j+r}+r$ for some $j\in [2r+1]$ $(\mod (2r+1))$. If $\depth(T)=\dim(T)$, then  either $d_{i},\ldots,d_{i+r-1}$  or  $d_{i+1},\ldots,d_{i+r}$ are non-zero by Theorem \ref{thm:depth}. This implies that $\min\{d_{j+1},d_{j+2},\ldots,d_{j+r-1}\}\geq 1$ and $\max\{d_{j},d_{j+r}\}\geq 1$.

Conversely, assume  $\min\{d_{j+1},d_{j+2},\ldots,d_{j+r-1}\}\geq 1$ and $\max\{d_{j},d_{j+r}\}\geq 1$. It follows that either $d_{i},\ldots,d_{i+r-1}$  or  $d_{i+1},\ldots,d_{i+r}$ are non-zero. Thus, $\depth(T)=\dim(T)$. 
\end{proof}

\section{Coloring of Sectionable Tournaments}
In this section, we construct an upper bound on the (acyclic) chromatic number of sectionable tournaments in terms of the dimension of their acyclic complexes as we promised. We begin with the following technical lemma.

\begin{proposition}\label{prop:2k}
$\ceil[\Big]{ \frac{k(2r+1)}{r+1}}=2k$ for every pair of positive integers $r\geq k$. 
\end{proposition}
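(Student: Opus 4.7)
The plan is to proceed by a direct algebraic simplification of the fraction $k(2r+1)/(r+1)$, and then to locate it strictly between the integers $2k-1$ and $2k$ using the hypothesis $r\geq k\geq 1$.

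First, I would rewrite the numerator $k(2r+1)$ in the form $2k(r+1)-k$, which yields the identity
\[
\frac{k(2r+1)}{r+1} \;=\; 2k \;-\; \frac{k}{r+1}.
\]
This reduces the claim to showing that the ceiling of $2k - k/(r+1)$ equals $2k$, i.e., that $0 < k/(r+1) < 1$.

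For the upper bound $k/(r+1)<1$, I would use $r\geq k$, so that $r+1 > k$ (since $k$ is a positive integer), giving $k/(r+1) < 1$. For the strict positivity, I would note $k\geq 1$ and $r+1\geq 2 > 0$, so $k/(r+1) > 0$. Combining these yields
\[
2k-1 \;<\; 2k-\frac{k}{r+1} \;<\; 2k,
\]
and since $2k$ is an integer, the ceiling of $2k-k/(r+1)$ is exactly $2k$.

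There is no real obstacle here; the only thing to be mindful of is that the hypothesis $r\geq k$ (rather than $r>k$) still suffices because the fraction $k/(r+1)$ remains strictly less than $1$ when $r=k$, as then $k/(r+1) = k/(k+1) < 1$. The inequality is strict on both sides, so the ceiling is attained cleanly at $2k$.
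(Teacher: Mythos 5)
Your proof is correct. It takes a more direct route than the paper: you decompose the numerator as $k(2r+1)=2k(r+1)-k$ to get $\frac{k(2r+1)}{r+1}=2k-\frac{k}{r+1}$ and then simply observe that $0<\frac{k}{r+1}<1$ under the hypothesis $r\geq k\geq 1$, so the ceiling lands on $2k$. The paper instead runs a longer chain of floor/ceiling conversion identities, $\ceil[\big]{\frac{a}{b}}=\floor[\big]{\frac{a+b-1}{b}}$ and $\floor[\big]{\frac{a}{b}}=\ceil[\big]{\frac{a-b+1}{b}}$, peeling off $k$ twice before arriving at $\ceil[\big]{\frac{-k}{r+1}}=0$ --- which is exactly the same underlying fact that $\frac{k}{r+1}\in(0,1)$. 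Your version is shorter, avoids invoking those identities, and makes the role of the hypothesis $r\geq k$ (needed only for the strict inequality $\frac{k}{r+1}<1$) completely transparent; the paper's version has the mild advantage of staying entirely within integer-valued floor/ceiling arithmetic, but nothing in the argument requires that. Your remark that $r=k$ still works because $\frac{k}{k+1}<1$ is a good sanity check and correctly identifies the only place the hypothesis could conceivably bite.
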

\begin{proof}
We will use the facts $\ceil[\big]{ \frac{a}{b}}=\floor[\big]{ \frac{a+b-1}{b}}$ and $\floor[\big]{ \frac{a}{b}}=\ceil[\big]{ \frac{a-b+1}{b}}$ for $a,b\in \N$. Then,
\begin{align*}
\ceil[\Big]{ \frac{k(2r+1)}{r+1}}&=\floor[\Big]{ \frac{k(2r+1)+r}{r+1}}=\floor[\Big]{ \frac{k(r+1)+r(k+1)}{r+1}}\\
&=k+\floor[\Big]{ \frac{r(k+1)}{r+1}}=k+\ceil[\Big]{ \frac{r(k+1)-r}{r+1}}=k+\ceil[\Big]{ \frac{rk}{r+1}}=k+\ceil[\Big]{ \frac{rk+k-k}{r+1}}\\
&=k+\ceil[\Big]{ \frac{k(r+1)-k}{r+1}}=
k+k+\ceil[\Big]{ \frac{-k}{r+1}}=2k
\end{align*}
where we use $\ceil[\Big]{ \frac{-k}{r+1}}=0$, since $1\leq k\leq r$.
\end{proof}

\begin{lemma}\label{lem:maxcoloring-eq}
Let $T=R_{2r+1}(P^1,P^2,\ldots,P^{2r+1})$  be a tournament. If $\chi(P^i)=k$ for each $1\leq i\leq 2r+1$,  
then $\chi(T)\leq \ceil[\Big]{\frac{k(2r+1)}{r+1}}$.  
\end{lemma}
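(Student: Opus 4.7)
The plan is to construct an explicit acyclic coloring of $T$ with the claimed number of colors by distributing the color classes of the individual $P^i$'s across windows of $r+1$ cyclically consecutive blocks.

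First, I would record a structural fact: a set $S \subseteq V(T)$ is acyclic in $T = R_{2r+1}(P^1, \ldots, P^{2r+1})$ whenever each intersection $S \cap V(P^i)$ is acyclic in $P^i$ and the index set $J(S) := \{i : S \cap V(P^i) \neq \emptyset\}$ induces a transitive subtournament of $R_{2r+1}$. This follows by projecting a hypothetical dicycle in $T[S]$ through the quotient map $\Gamma_{T,R_{2r+1}}\colon T\to R_{2r+1}$: either the dicycle lies entirely inside some $V(P^i)$ (ruled out by acyclicity within $P^i$), or its image is a nontrivial closed walk in $R_{2r+1}[J(S)]$, which is impossible in a transitive tournament.

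Second, I would use the defining property of $R_{2r+1}$: with cyclic ordering $v_1, \ldots, v_{2r+1}$ and arcs $v_iv_j$ whenever $j-i \in \{1,\ldots,r\} \pmod{2r+1}$, any set of $r+1$ cyclically consecutive indices $\{a, a+1, \ldots, a+r\} \pmod{2r+1}$ is a transitive subset of $R_{2r+1}$, with linear ordering given by consecutiveness.

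Third, I fix an acyclic $k$-coloring of each $P^i$ with color classes $C^i_1, \ldots, C^i_k$ (some possibly empty) and line them up in the single list
\[
C^1_1, C^2_1, \ldots, C^{2r+1}_1,\; C^1_2, C^2_2, \ldots, C^{2r+1}_2,\; \ldots,\; C^1_k, \ldots, C^{2r+1}_k,
\]
and partition it into $N := \lceil k(2r+1)/(r+1)\rceil$ consecutive blocks of size at most $r+1$. Each block uses at most one color class from each $P^i$, and the indices occurring in a block form an interval of $r+1$ cyclically consecutive values modulo $2r+1$ (this uses $r+1 \leq 2r+1$). Defining the $t$-th color class of $T$ as the union of the color classes in the $t$-th block, the structural fact applies: each $P^i$ contributes at most one acyclic color class, and the indices span a transitive subtournament of $R_{2r+1}$. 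Hence each union is acyclic in $T$, and these $N$ unions partition $V(T)$, giving $\chi(T) \leq N$ as required.

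The main step to execute carefully is the structural characterization of acyclic subsets of a composition $R_{2r+1}(P^1, \ldots, P^{2r+1})$ in terms of intersections with blocks and the induced index tournament; once that is in hand, the block construction and the counting of blocks are routine.
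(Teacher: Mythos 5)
Your proof is correct, and it reaches the bound by a genuinely cleaner route than the paper's. Both arguments rest on the same key principle — a color class meeting at most $r+1$ cyclically consecutive blocks, and meeting each in an acyclic set, is acyclic in $T$ because those $r+1$ indices span a transitive subtournament of $R_{2r+1}$ — but they execute it differently. The paper builds color palettes by a three-way case analysis on $\alpha=\lfloor k/(r+1)\rfloor$ and the remainder $\ell=k-\alpha(r+1)$: when $\alpha=0$ it splits $2k$ colors between the two halves $P^1,\ldots,P^{r+1}$ and $P^{r+2},\ldots,P^{2r+1}$ (and needs the auxiliary identity $\lceil k(2r+1)/(r+1)\rceil=2k$ for $k\le r$), when $\ell=0$ it cyclically rotates groups of $\alpha$ colors through windows of $r+1$ blocks, and otherwise it superimposes the two schemes. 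Your single interleaved list $C^1_1,C^2_1,\ldots,C^{2r+1}_1,C^1_2,\ldots$ chopped into windows of length $r+1$ subsumes all three cases at once: the periodicity $2r+1>r$ guarantees at most one class per block in each window, the window's index set is automatically a cyclic interval of length at most $r+1$, and the count $\lceil k(2r+1)/(r+1)\rceil$ falls out directly with no floor/ceiling manipulation. You also gain something the paper leaves implicit: an explicit statement and proof (via the quotient map $\Gamma_{T,R_{2r+1}}$ and the fact that a closed walk in a transitive tournament cannot use a genuine arc) of the structural lemma that legitimizes merging color classes across transitively arranged blocks. The paper invokes this silently with the phrase ``each color can be used in at most $r+1$ consecutive blocks,'' so your version is both shorter and more self-contained.
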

\begin{proof}
The idea of our proof is based on creating a palette of colors for each block which contains sufficiently many colors for it to produce a legal coloring for the tournament $T$. 

Let $\alpha=\lfloor\frac{k}{r+1}\rfloor$ and $\ell=k-\lfloor\frac{k}{r+1}\rfloor(r+1)$ for $k\in \N$. Note that $\ell$ is the remainder of $k$ divided by $r+1$. We have three cases.\medskip

\textbf{Case 1:} $\alpha=0$. Then $k=\ell\leq r$. Consider a cyclic orientation of $T$ and $k$-colorings of $P^1$ and $P^{r+2}$ from the sets $\{c_1,c_2,\ldots,c_k\}$,  $\{c_{k+1},c_{k+2},\ldots,c_{2k}\}$ respectively. Each block $P^i$ needs $k$ colors and each color can be used in at most $r+1$ consecutive blocks of $T$. We reserve the color set $\{c_1,c_2,\ldots,c_k\}$ for each of the blocks $P^1,\ldots,P^{r+1}$, while the color set $\{c_{k+1},c_{k+2},\ldots,c_{2k}\}$ is reserved for each of the blocks $P^{r+2},\ldots,P^{2r+1}$. Observe that there are exactly $k=\ell$ colors for the block $P^i$ with $1\leq i\leq 2r+1$, and we totally use $2k$ colors for the proper coloring of $T$. Then, the claim follows from Proposition \ref{prop:2k}.\medskip

\textbf{Case 2:} $\alpha\geq 1$ and $\ell=0$. By considering a cyclic orientation of $T$, we create a set of palettes of colors. Throughout, the indices of the blocks are considered in modulo $(2r+1)$. We will reserve  $(2r+1)\alpha$ colors for a $k$-coloring of each block $P^i$. Note that each block $P^i$ needs $k$ colors, and each color can be used in at most $r+1$ consecutive blocks. For each $i\in [2r+1]$, we apply to a cyclic iteration as follows; we reserve colors $c_{(i-1)\alpha+1},c_{(i-1)\alpha+2},\ldots,c_{(i-1)\alpha+\alpha}$ for  each block $P^i,\ldots,P^{i+r}$. Observe that each block $P^i$ receives exactly $\alpha.(r+1)=k$ colors, and we totally use $(2r+1)\alpha$ colors for the proper coloring of $T$. Then, the claim follows with the fact $(2r+1)\alpha \leq \ceil[\Big]{\frac{k(2r+1)}{r+1}}$. \medskip

\textbf{Case 3:} $\alpha\geq 1$ and $\ell\geq 1$. We will combine the first two cases in order to prove the claim. We begin with using $(2r+1)\alpha$ colors as in Case 2, and then use $2\ell$ more colors as in Case 1. That is, we first reserve the color palette $\{c_{(i-1)\alpha+1},c_{(i-1)\alpha+2},\ldots,c_{(i-1)\alpha+\alpha}\}$ for each block $P^i,\ldots,P^{i+r}$ and for each $i\in [2r+1]$, and next we reserve  the color palette $\{d_1,d_2,\ldots,d_\ell\}$ for each of the blocks $P^1,\ldots,P^{r+1}$, while the color palette $\{d_{\ell+1},d_{\ell+2},\ldots,d_{2\ell}\}$ is reserved for each of the blocks $P^{r+2},\ldots,P^{2r+1}$.
We totally use $(2r+1)\alpha +2\ell$ colors. Therefore, we conclude that
\begin{align*}
\chi(T)&\leq (2r+1)\alpha +2\ell=(2r+1)\floor[\Big]{\frac{k}{r+1}}+2\left(k-\floor[\Big]{\frac{k}{r+1}}(r+1)\right)\\ 
&= 2k+\Big( (2r+1)-(2r+2) \Big).\floor[\Big]{\frac{k}{r+1}}\\
&= 2k-\floor[\Big]{\frac{k}{r+1}}= \floor[\Big]{\frac{2k(r+1)-k}{r+1}}\leq \floor[\Big]{\frac{(2r+1)k}{r+1}}. 
\end{align*} 
\end{proof}

\begin{lemma}\label{lem:maxcoloring-neq}
Let $T=R_{2r+1}(P^1,P^2,\ldots,P^{2r+1})$  be a tournament and let $\chi(P^i)=k_i$ for each $1\leq i\leq 2r+1$. Suppose $k_s > k_t\geq k_\ell$ for each $\ell\in [2r+1] \setminus \{s,t\}$. Then, $\chi(T)\leq  \max\bigg{\{} \ceil[\Big]{\frac{(2r+1)(k_s+k_t)-1}{2r+2}}, k_s \bigg{\}}.$    
\end{lemma}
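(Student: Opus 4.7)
The plan is to prove the lemma by constructing an explicit acyclic coloring of $T$, extending the palette rotation idea used in Lemma \ref{lem:maxcoloring-eq}. The key reduction is that a proper acyclic $N$-coloring of $T$ corresponds to assigning each block $P^i$ a color palette $C_i$ with $|C_i|\geq k_i$ such that for every color $c$ the set $\{i : c \in C_i\}$ forms a cyclically consecutive interval of at most $r+1$ blocks; then $\chi(T)\leq |\bigcup_i C_i|$, because any $r+1$ cyclically consecutive blocks of $R_{2r+1}$ form an acyclic set and therefore each color class is acyclic in $T$.

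I would split the argument into two regimes according to which term of the stated maximum dominates. When $k_s \geq (2r+1)k_t - 1$ --- precisely the condition under which the maximum on the right equals $k_s$ --- I would declare $C_s$ a master palette of size $k_s$ and, for each $i\neq s$, choose $C_i$ to be a pairwise disjoint $k_i$-subset of $C_s$. This is feasible because the hypothesis forces $k_s\geq 2rk_t\geq \sum_{i\neq s} k_i$. Every color then appears in at most two blocks $\{P^s,P^i\}$, which lies in a window of length at most $r+1$, giving $\chi(T)\leq k_s$.

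In the complementary regime $k_s<(2r+1)k_t-1$, set $N=\lceil \frac{(2r+1)(k_s+k_t)-1}{2r+2}\rceil$. I would follow the three-case Euclidean-division template of Lemma \ref{lem:maxcoloring-eq}, but with the ``effective'' demand per block taken to be $(k_s+k_t)/2$ instead of a uniform $k$, and with the surplus $k_s-k_t$ of $P^s$ absorbed into the cyclic windows that contain $P^s$. Concretely, I would rotate a base palette around the cycle to give each block $k_t$ colors as in Lemma \ref{lem:maxcoloring-eq}, then thicken the rotation near $P^s$ so that block receives $k_s-k_t$ additional colors shared with a carefully chosen window containing it, displacing some base colors to avoid double counting.

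The main obstacle I expect is the sharp counting: showing that the total number of colors used actually matches the ceiling $N$ rather than the coarser bound $k_s+k_t$ (which a naive ``base plus surplus'' construction would produce). The $-1$ inside the ceiling corresponds to a single color shared between the two halves of the palette at exactly one boundary position of the cycle, analogous to the saving in Case 3 of Lemma \ref{lem:maxcoloring-eq}. Making this sharing rigorous while keeping the window constraint intact --- and handling the quotient and remainder of $k_s+k_t$ modulo $2(r+1)$ with the same care as the $\alpha,\ell$ split in Lemma \ref{lem:maxcoloring-eq} --- is where the bulk of the technical work will lie.
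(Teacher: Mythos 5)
Your palette framework---assign each block $P^i$ a set $C_i$ of at least $k_i$ colors so that the support of every color is contained in a window of at most $r+1$ cyclically consecutive blocks, then color each block acyclically from its own palette---is exactly the mechanism the paper's proof uses, and your first regime is sound: when $k_s\geq (2r+1)k_t-1$ you indeed have $k_s\geq 2rk_t\geq\sum_{i\neq s}k_i$, so packing the other $2r$ palettes disjointly inside $C_s$ gives $\chi(T)\leq k_s$. (A small imprecision: the supports there are pairs $\{s,i\}$, which are not themselves intervals, so the window condition should read ``contained in'' rather than ``forms'' a consecutive interval of length at most $r+1$; this is harmless since any two indices of a $(2r+1)$-cycle lie in such a window.)

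The genuine gap is in the complementary regime $k_t<k_s<(2r+1)k_t-1$: you do not produce the construction, you only announce that a ``thickened rotation'' should work and concede that the sharp counting---which is the entire content of the lemma---remains to be done. Moreover, the natural reading of your plan fails: rotating a base palette so that every block receives $k_t$ colors costs $\lceil (2r+1)k_t/(r+1)\rceil$ colors by Lemma~\ref{lem:maxcoloring-eq}, and adjoining $k_s-k_t$ fresh colors for $P^s$ then exceeds the target already for $r=1$, $k_t=3$, $k_s=4$ (you get $5+1=6$ while $N=\lceil 20/4\rceil=5$), so the saving must come from a structurally different allocation, not a patch. The paper inverts the order of operations: it first spreads the $k_s$ colors of $P^s$ over the $\lceil k_s/(r+1)\rceil$ shifted windows of length $r+1$ ending at $P^s$, so the neighboring blocks automatically inherit a staircase of these colors, and only then tops up every other block with $k_t-\lfloor k_s/(r+1)\rfloor$ additional rotated colors, for a total of $k_s+k_t-\lfloor k_s/(r+1)\rfloor$, which is at most the claimed ceiling. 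It also splits the cases at $k_s\geq 2k_t$ rather than at your threshold: there one reserves one copy of $\{1,\dots,k_t\}$ for the $r+1$ blocks clockwise from $P^s$ and a disjoint copy for the $r+1$ blocks counterclockwise from $P^s$, achieving $\chi(T)\leq k_s$ on that larger range and shrinking the hard case to $k_t<k_s<2k_t$. You should either adopt that split or verify that your thickening argument survives when $k_s$ is much larger than $k_t$; as written, the lemma is not proved.
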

\begin{proof}
We follow the same idea used in the proof of Lemma~\ref{lem:maxcoloring-eq}. We  prove the claim by dividing it into two separate cases. Once again the indices of the blocks are considered in modulo $(2r+1)$ throughout. \medskip

\textbf{Case 1:} $k_s\geq 2k_t$. Then, we have $\chi(T)=\chi(P^s)=k_s$. Indeed, we reserve all colors $\{1,2,\ldots,k_t\}$ to each block in the sequence $P^s,P^{s+1},\ldots,P^{s+r}$. On the other hand, colors in $\{k_t+1,k_t+2,\ldots,2k_t\}$ are reserved for the blocks $P^s,P^{s-1},\ldots,P^{s-r}$. Finally, we reserve  $k_s-2k_t$ more colors for the block $P^s$. Observe that each $P^i$ with $i\in [2r+1]\setminus \{s\}$ receives exactly $k_t$ colors and the block $P^s$ receives exactly $k_s$ colors.
This creates a legal coloring, since each color is used in at most $r+1$ consecutive blocks; hence, $\chi(T)=k_s$. \medskip

\textbf{Case 2:} $k_t<k_s< 2k_t$. We may assume without loss of generality $s=1$ so that $k_s=k_1=\chi(P^1)$. Our aim is to provide a coloring in which $P^1$ receives $k_s$ colors, and each of the other blocks gets $k_t$ colors. Let  $\ell=k_s-\lfloor\frac{k_s}{r+1}\rfloor(r+1)$ and $p_i=\lfloor\frac{k_s}{r+1}\rfloor (i-r-1)$.  Note that $\ell$ is the remainder of $k_s$ divided by $r+1$, and so $\ell \leq r$.

Consider $r+1$ consecutive blocks ending with $P^s=P^1$ in clockwise order, which are $P^{r+2},P^{r+3},$ $\ldots,P^{2r+1},P^1$. We first reserve $k_s$ colors to $P^1$ as follows; start with the color $c_j$ for $1\leq j\leq r+1$ and reserve it for each of the block $P^{r+j+1},P^{r+j+2},\ldots,P^j$. We continue this process until the palette for $P^1$ gets $k_s$ colors, which consumes $\lfloor\frac{k_s}{r+1}\rfloor$-rounds if $r+1$ divides $k_s$. Otherwise, we continue to reserve the color $c_{\lfloor\frac{k_s}{r+1}\rfloor+j}$ with $1\leq j\leq \ell$ for the palette of each of the block $P^{r+j+1},P^{r+j+2},\ldots,P^j$. 

Now, the palette for $P^1$ contains $k_s$ colors within a color set $\{c_1,c_2\ldots,c_{k_s}\}$. We remark that if $\ell=0$, then for each $r+2 \leq i \leq 2r+1$, the palette for the block $P^i$ receives $p_i$ colors, while for each $2 \leq i \leq r+1$, the palette for the block $P^i$ gets $p_{2r+3-i}$ colors. On the other hand, if $ \ell\geq 1$, then  the palette for the block $P^{2r+1}$ receives exactly $\lfloor\frac{k_s}{r+1}\rfloor r+\ell$ colors, and by the symmetry, the block $P^{2}$ is colored with $\lfloor\frac{k_s}{r+1}\rfloor r+\ell-1$ colors. In more detail, for $\ell\geq 0$,  the number of colors in each palette for the blocks are

\begin{itemize}
\item  $k_s$ colors in the palette of $P^{1}$, 
\item  $p_{2r+3-i}+\ell+1-i$ colors in the palette of $P^{i}$ for each $2\leq i \leq \ell+1$, 
\item  $p_{2r+3-i}$ colors in the palette of $P^{i}$ for each $\ell+2\leq i \leq r+1$, 
\item $p_{i}+i-r-1$ colors in the palette of $P^{i}$ for each $r+2\leq i\leq r+\ell+1$, 
\item $p_{i}+\ell$ colors in the palette of $P^{i}$ for each $r+\ell+2\leq i\leq 2r+1$. 
\end{itemize}

Let $\lambda = k_t-\lfloor\frac{k_s}{r+1}\rfloor$. We now reserve colors to the blocks that need extra colors. Observe that the block which needs the largest number of colors is $P^{r+1}$. This is because it has $ \lfloor\frac{k_s}{r+1}\rfloor$ colors for the moment. Notice that if $\ell=0$, then $P^{r+2}$ has  $ \lfloor\frac{k_s}{r+1}\rfloor$ colors as well.  So far we have already used only $k_s$ colors, and to extend this to a proper coloring, we need to create $\lambda=k_t-\lfloor\frac{k_s}{r+1}\rfloor$ more colors in order to obtain a $k_t$ coloring of all blocks except for $P^1$.

Next, consider $r+1$ consecutive blocks starting with $P^{2}$ in clockwise order, which are $P^{2},P^{3},\ldots,P^{r+2}$. We reserve $\lambda$ colors to $P^{r+1}$ as follows; start with the color $d_j$ for $1\leq j\leq r$ and reserve it for each of the block $P^{j+1},P^{j+2},\ldots,P^{j+r+1}$. We continue this process until the palette for $P^{r+1}$ gets $\lambda$ colors, which consumes $\lfloor\frac{\lambda}{r}\rfloor$-rounds if $r$ divides $\lambda$. Otherwise, we continue to reserve the color $d_{\lfloor\frac{\lambda}{r}\rfloor r+j}$ for $1\leq j\leq \lambda-\lfloor\frac{\lambda}{r}\rfloor r$ for the palette of each of the block $P^{j+1},P^{j+2},\ldots,P^{j+r+1}$.

Finally, the palette for $P^{r+1}$ gets $\lambda=k_t-\lfloor\frac{k_s}{r+1}\rfloor$ more colors from the set $\{d_1,d_2\ldots,d_{\lambda}\}$. By the cyclic ordering, the palette for $P^{r+2}$ also gets $\lambda$ more colors.  We conclude that each block except for $P^1$ totally gets $k_t$ colors. In particular, we totally use $k_s+k_t-\lfloor\frac{k_s}{r+1}\rfloor$ colors. Since $k_t\leq k_s-1$, it follows that
\begin{align*}
\chi(T)&\leq  k_s+k_t-\floor[\Big]{\frac{k_s}{r+1}} \leq \floor[\Big]{k_s+k_t-\frac{k_s}{r+1}} \\
 &\leq \floor[\Big]{\frac{(r+1)k_s+(r+1)k_t-k_s}{r+1}}\leq \floor[\Big]{\frac{2rk_s+(2r+2)k_t}{2r+2}}\leq \floor[\Big]{\frac{(2r+1)k_s+(2r+1)k_t-1}{2r+2}}.
\end{align*}
This completes the proof.
\end{proof}

\begin{proposition}\label{prop:funct-max}
For  $r\in \N$, $p \in \R$ with  $a=\frac{r+1}{2r+1}$ and $p\geq 1$,  the following inequality holds
$$a^{\log(1+p)}+a^{\log(1+\frac{1}{p})}\leq 2a $$
where the logarithm has two as its base.
\end{proposition}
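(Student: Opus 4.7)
My plan is to turn this into a one-variable calculus problem in $p$ and show the function has a global maximum at $p=1$. Writing $c=\log a$, the inequality to prove becomes
\[
\psi(p):=(1+p)^{c}+\bigl(1+\tfrac{1}{p}\bigr)^{c}\leq 2a=2\cdot 2^{c},
\]
and the natural first observation is that equality holds at $p=1$: $\psi(1)=2\cdot 2^{c}=2a$. So the statement reduces to showing that $p=1$ is the global maximum of $\psi$ on $[1,\infty)$.

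Next I would compute $\psi'(p)$ and factor it cleanly. A direct calculation gives
\[
\psi'(p)=c(1+p)^{c-1}-\frac{c}{p^{2}}\Bigl(\frac{p+1}{p}\Bigr)^{c-1}
        =c(1+p)^{c-1}\bigl[1-p^{-(c+1)}\bigr].
\]
Since $a=\frac{r+1}{2r+1}\in(\tfrac{1}{2},1)$ for $r\geq 1$, one has $c<0$ and, crucially, $c+1>0$. Therefore the prefactor $c(1+p)^{c-1}$ is negative, while for $p>1$ the bracket $1-p^{-(c+1)}$ is strictly positive, giving $\psi'(p)<0$ on $(1,\infty)$ and $\psi'(1)=0$. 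Hence $\psi$ is strictly decreasing on $[1,\infty)$, and the required inequality $\psi(p)\leq \psi(1)=2a$ follows immediately.

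The main subtlety, and the only place any care is needed, is the sign analysis of the bracket: it hinges on $c+1>0$, which is exactly the condition $a>\tfrac{1}{2}$. I would therefore open the proof by verifying this explicitly from $a=(r+1)/(2r+1)$, since it is genuinely using the structural form of $a$ rather than just $a<1$. Everything else is a straightforward differentiation and sign check; there is no real obstacle once the derivative has been put into the factored form above. (As a sanity check, $\psi(p)\to 0+1=1<2a$ as $p\to\infty$, which is consistent with $\psi$ being monotone decreasing from $2a$ down toward $1$.)
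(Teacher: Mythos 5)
Your proof is correct and follows essentially the same route as the paper: both reduce the claim to showing that the function of $p$ attains its maximum at $p=1$, where equality holds. In fact, your factored derivative $\psi'(p)=c(1+p)^{c-1}\bigl[1-p^{-(c+1)}\bigr]$ and the sign analysis via $c<0$, $c+1>0$ (i.e. $\tfrac{1}{2}<a<1$) supply the monotonicity argument that the paper's proof merely asserts.
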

\begin{proof}
Let $r\in \N$, $p \in \R$ with  $a=\frac{r+1}{2r+1}$ and $p\geq 1$. Clearly $\frac{1}{2}<a\leq\frac{2}{3} $. For a fixed $a$, we define the function$f(p)=a^{\log(1+p)}+a^{\log(1+\frac{1}{p})} $ on the interval $[0,\infty)$. The function $f$ obtains it maximum value at the point $p=1$. Indeed, the function $f$ is ascending on the interval $(0,1)$ and descending on the interval $(1,\infty)$. Therefore, $p=1$ is the maximum value point of $f$. Hence the claimed equality holds, since $f(p)\leq f(1)=2a$.
\end{proof}

We are now ready to prove our second main result.
\begin{theorem}\label{thm:col-bound}
If $T=R_{2r+1}(P^1,P^2,\ldots,P^{2r+1})$ is a $(2r+1)$-sectionable tournament, then 
$$\chi(T)\leq 2 \left(2- \frac{1}{r+1} \right)^{\log(\dim(T)+1)}-1$$
where the logarithm has two as its base.
\end{theorem}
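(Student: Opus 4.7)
The plan is to induct on $|V(T)|$. With $a := \frac{r+1}{2r+1} = \bigl(2 - \frac{1}{r+1}\bigr)^{-1} \in (\frac{1}{2}, 1)$, the claim becomes $\chi(T) + 1 \leq 2\, a^{-\log(\dim(T)+1)}$. The base case (transitive $T$) is immediate since $\chi(T) = 1$ and $a^{-\log(\dim(T)+1)} \geq a^0 = 1$. For the inductive step, write $T = R_{2r+1}(P^1,\ldots,P^{2r+1})$ and set $k_i := \chi(P^i)$, $d_i := \dim(P^i)$; choose distinct indices $s \neq t$ at which the two largest chromatic numbers $k_s \geq k_t$ are attained. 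By induction, $k_i + 1 \leq 2\, a^{-\log(d_i + 1)}$ for each $i$.

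The first step is to consolidate Lemmas~\ref{lem:maxcoloring-eq} and~\ref{lem:maxcoloring-neq} into the single dichotomy
\[
\chi(T) + 1 \;\leq\; \max\!\left\{\, k_s + 1,\ \frac{k_s + k_t + 2}{2a} \,\right\}.
\]
For $k_s \geq 2 k_t$ the first alternative follows directly from Case~1 of Lemma~\ref{lem:maxcoloring-neq}. For $k_t < k_s < 2 k_t$, the second alternative emerges from Case~2 via the inequality $\lceil N/M \rceil \leq (N + M - 1)/M$ together with the identity $(2r+1)/(2r+2) = 1/(2a)$, after a short arithmetic simplification. For $k_s = k_t$, Lemma~\ref{lem:maxcoloring-eq} (applied once each block is enlarged to a $k_s$-coloring) gives $\chi(T) \leq \lceil k_s/a \rceil$, and the sharp rounding estimate $\lceil k_s/a \rceil - k_s/a \leq r/(r+1) = (1-a)/a$ lands this bound in the second alternative as well.

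In the first branch, Corollary~\ref{cor:dim} yields $\dim(T) + 1 \geq d_s + 1$, so $k_s + 1 \leq 2\, a^{-\log(d_s + 1)} \leq 2\, a^{-\log(\dim(T)+1)}$ by monotonicity of $x \mapsto a^{-\log x}$ (increasing since $a < 1$). For the second branch, inserting the inductive bounds gives $(k_s + k_t + 2)/(2a) \leq (1/a)\bigl(a^{-\log(d_s+1)} + a^{-\log(d_t+1)}\bigr)$, and I then invoke Proposition~\ref{prop:funct-max} with $p := \max\{(d_s+1)/(d_t+1),\,(d_t+1)/(d_s+1)\} \geq 1$, so that $1+p$ and $1+1/p$ equal $(d_s+d_t+2)/(d_t+1)$ and $(d_s+d_t+2)/(d_s+1)$ in some order. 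Factoring $a^{\log(d_s+d_t+2)}$ out of both summands rearranges the proposition into
\[
a^{-\log(d_s+1)} + a^{-\log(d_t+1)} \;\leq\; 2a \cdot a^{-\log(d_s + d_t + 2)}.
\]
Hence $(k_s + k_t + 2)/(2a) \leq 2\, a^{-\log(d_s + d_t + 2)}$, and Corollary~\ref{cor:dim} together with $r \geq 1$ forces $\dim(T) + 1 \geq d_s + d_t + r + 1 \geq d_s + d_t + 2$; monotonicity then closes the induction. The main obstacle I anticipate is establishing the uniform dichotomy in the first step — particularly the equal case $k_s = k_t$, where the ceiling gap $\lceil k_s / a \rceil - k_s / a$ must be controlled exactly by $(1-a)/a$, matching the worst-case remainder $r/(r+1)$ tightly — whereupon Proposition~\ref{prop:funct-max} supplies precisely the factor $2a$ that calibrates the recursion and produces the advertised base $2 - 1/(r+1)$.
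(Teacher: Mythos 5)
Your proposal is correct and follows essentially the same route as the paper: induction on the order of $T$, the dichotomy supplied by Lemmas~\ref{lem:maxcoloring-eq} and~\ref{lem:maxcoloring-neq}, the dimension bound of Corollary~\ref{cor:dim} (giving $\dim(T)\geq d_s+d_t+r\geq d_s+d_t+1$), and Proposition~\ref{prop:funct-max} to close the recursion. Your consolidation of the two coloring lemmas into the single bound $\chi(T)+1\leq\max\bigl\{k_s+1,\ (k_s+k_t+2)/(2a)\bigr\}$ is a clean repackaging of the paper's floor/ceiling manipulations, but the underlying argument is the same.
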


\begin{proof}
Suppose that $\dim(T)=d$. We use induction on the order of $T$. If $|V(T)|=3$, then $T$ is just a $3$-dicycle so that $\chi(T)=2=2\frac{3}{2}-1$. For the case $|V(T)|>3$, we write $\dim(P^i)=d_i$, $\chi(P^i)=k_i$, $|V(P^i)|=2r_i+1$, and assume that $\chi(P^i)\leq 2 \big( \frac{2r_i+1}{r_i+1} \big)^{\log(d_i+1)}-1$ 
for each $1\leq i\leq m=2r+1$. Observe that the claim trivially holds if $\chi(T)\leq \chi(P^i)$ for some $i\in [2r+1]$. \medskip

Assume that $\max\{\chi(P^i)+\chi(P^j)\}=k_s+k_t$ for $i,j\in [2r+1]$ with $i\neq j$, and let $k_s\geq k_t$. By Corollary \ref{cor:dim}, we have $d\geq \underset{i,j \in [2r+1]}{\max} \{d_i+d_{j}+1 \}$.  We set $r:=\max\{r_1,r_2,\ldots,r_m\}$. It follows $d_s+d_t+1\leq d$ and we set $\min\{d_s,d_t\}=d_\ell$. Note that if $k_s=k_t$, then 
$\chi(P^s)\leq 2 \big( \frac{2r+1}{r+1} \big)^{\log(d_s+1)}-1$, and similarly $\chi(P^t)\leq 2 \big( \frac{2r+1}{r+1} \big)^{\log(d_t+1)}-1$. \medskip 

\textbf{Case $1$.} $k_s=k_t$. We apply to Lemma \ref{lem:maxcoloring-eq}.
\begin{align*}
\chi(T)&\leq \ceil[\Big]{ \frac{(2r+1)k_s}{r+1}} =   \floor[\Big]{ \frac{(2r+1)k_s+r}{r+1}}\leq  \floor[\Big]{ \frac{2r+1}{r+1} \left( 2 \left( \frac{2r+1}{r+1} \right)^{\log(d_s+1)}-1 \right)+\frac{r}{r+1}} \\
&\leq \floor[\Big]{ \frac{2r+1}{r+1} \left( 2 \left( \frac{2r+1}{r+1} \right)^{\log(d_\ell+1)}-1 \right)+\frac{r}{r+1}} \\
&\leq \floor[\Big]{ 2 \left( \frac{2r+1}{r+1} \right)^{1+\log(d_\ell+1)}-\frac{2r+1}{r+1}+\frac{r}{r+1}}  \\
&\leq \floor[\Big]{ 2 \left( \frac{2r+1}{r+1} \right)^{\log(2d_\ell+2)}-1 } \\
&\leq \floor[\Big]{ 2 \left( \frac{2r+1}{r+1} \right)^{\log(d+1)}-1 }. \\
\end{align*}

\textbf{Case $2$.}  $ k_t<k_s$. 
We set $w_s:=\log(\frac{d+1}{d_s+1})$ and $w_t:=\log(\frac{d+1}{d_t+1})$, and apply to Lemma~\ref{lem:maxcoloring-neq}. 
\begin{align*}
\chi(T)&\leq \ceil[\Big]{\frac{(2r+1)(k_s+k_t)-1}{2r+2}} =   \floor[\Big]{\frac{(2r+1)(k_s+k_t)+2r}{2r+2}} \\
&\leq  \floor[\Big]{ \frac{2r+1}{2r+2} \left( 2 \left( \frac{2r+1}{r+1} \right)^{\log(d_s+1)}-1 +2 \left( \frac{2r+1}{r+1} \right)^{\log(d_t+1)}-1\right)+\frac{r}{r+1}} \\
&\leq  \floor[\Big]{ \frac{2r+1}{2r+2} \left( 2 \left( \frac{2r+1}{r+1} \right)^{\log(d_s+1)+w_s-w_s} +2 \left( \frac{2r+1}{r+1} \right)^{\log(d_t+1)+w_t-w_t}\right)-\frac{2r+1}{r+1}+\frac{r}{r+1}} \\
&\leq  \floor[\Big]{ \frac{2r+1}{2r+2} \left( 2\left( \frac{2r+1}{r+1} \right)^{\log(d+1)-w_s} +2 \left( \frac{2r+1}{r+1} \right)^{\log(d+1)-w_t}\right)-1} \\
&\leq  \floor[\Big]{ \frac{2r+1}{2r+2}  \left( \left(\frac{2r+1}{r+1}\right)^{-w_s}+ \left(\frac{2r+1}{r+1}\right)^{-w_t}\right)  2 \left( \frac{2r+1}{r+1} \right)^{\log(d+1)} -1} \\
&\leq  \floor[\Big]{ \frac{2r+1}{2r+2}  \left( \left(\frac{r+1}{2r+1}\right)^{\log\left(\frac{d+1}{d_s+1}\right)}+ \left(\frac{r+1}{2r+1}\right)^{\log\left(\frac{d+1}{d_t+1}\right)}\right)  2 \left( \frac{2r+1}{r+1} \right)^{\log(d+1)} -1}. 
\end{align*}

The claim follows once if we prove that

$$  \left(\frac{r+1}{2r+1}\right)^{\log\left(\frac{d+1}{d_s+1}\right)}+ \left(\frac{r+1}{2r+1}\right)^{\log\left(\frac{d+1}{d_t+1}\right)}\leq \frac{2r+2}{2r+1}. $$

If we write $p=\frac{d_s+1}{d_r+1} $, it follows that 

\begin{align*}
\left(\frac{r+1}{2r+1}\right)^{\log\left(\frac{d+1}{d_s+1}\right)}+ \left(\frac{r+1}{2r+1}\right)^{\log\left(\frac{d+1}{d_t+1}\right)}&\leq \left(\frac{r+1}{2r+1}\right)^{\log\left(\frac{d_s+d_t+2}{d_s+1}\right)}+ \left(\frac{r+1}{2r+1}\right)^{\log\left(\frac{d_s+d_t+2}{d_t+1}\right)}\\
&=\left(\frac{r+1}{2r+1}\right)^{\log\left(1+\frac{d_t+1}{d_s+1}\right)}+ \left(\frac{r+1}{2r+1}\right)^{\log\left(1+\frac{d_s+1}{d_t+1}\right)}\\
&=\left(\frac{r+1}{2r+1}\right)^{\log\left(1+\frac{1}{p}\right)}+ \left(\frac{r+1}{2r+1}\right)^{\log(1+p)} \leq \frac{2r+2}{2r+1},
\end{align*}

where the last equality is from Proposition \ref{prop:funct-max}. Thus, we have

$$  \left(\frac{r+1}{2r+1}\right)^{\log\left(\frac{d+1}{d_s+1}\right)}+ \left(\frac{r+1}{2r+1}\right)^{\log\left(\frac{d+1}{d_t+1}\right)}\leq \frac{2r+2}{2r+1}, $$

which completes the proof.
\end{proof}

\section*{Acknowledgement}
I would like to thank Yusuf Civan for his invaluable comments and generous encouragement during preparation of this manuscript. I have received indispensable help from the referee. S/he pointed out several inconsistencies in the earlier version of this paper and has contributed substantially to the overall improvement of the presentation. 

\section*{Data Availability}

Data sharing not applicable to this article as no datasets were generated or analysed during the current study.



\begin{thebibliography}{00}

\bibitem{ALS} D.~Attali, A.~Lieutier and D.~Salinas,
\newblock{\em Efficient data structure for representing and simplifying simplicial complexes in high dimensions,}
\newblock{Int. J. Compt. Geometry, 22 (2012), 279-303.}

\bibitem{bousquet} M. Bousquet-Melou, S. Linusson, and E. Nevo.
\newblock{\em  On the independence complex of square grids,}
\newblock{Journal of Algebraic combinatorics, 27(4), 423-450, 2008.}

\bibitem{MB1} M.~Burzio and D.C.~Demaria,
\newblock{\em On simply disconnected tournaments,}
\newblock{Ars Combin. 24 (1987), 149-161.}

\bibitem{MB2} M.~Burzio and D.C.~Demaria,
\newblock{\em Characterization of Tournaments by Coned 3-Cycles,}
\newblock{Acta Univ. Carolin. Math. Phys. 28 (1987), 25-30.}

\bibitem{zd} Z.~Deniz, 
\newblock{\em Topology of acyclic complexes of tournaments and coloring. }
\newblock{Applicable Algebra in Engineering, Communication and Computing 26.1-2 (2015): 213-226.}

\bibitem{DK} D.C.~Demaria and J.C.S.~Kiihl,
\newblock{\em On the complete digraphs which are simply disconnected,}
\newblock{Publ. Mathematiques, 35, (1991), 517-525.}

\bibitem{EH} R.~Ehrenborg and G.~Hetyei,
\newblock{\em The topology of the independence complex,}
\newblock{European J. of Combinatorics, 27 (2006), 906-923.}

\bibitem{forman98} 
R.~Forman,
\newblock{\em Morse theory for cell complexes} 
\newblock{Adv. Math. 134(1), 90–145 (1998)}

\bibitem{forman02} 
R.~Forman,
\newblock{\em A user’s guide to discrete Morse theory} 
\newblock{Sem. Lothar. Combin. 48, B48c (2002)}

\bibitem{jacop} 
J.~Jonsson,
\newblock{\em Simplicial complexes of graphs} 
\newblock{Springer Science and Business Media (2007).}


\bibitem{kozlov} D.~Kozlov,
\newblock{\em Combinatorial algebraic topology,}
\newblock{vol. 21. Springer Science and Business Media, 2007.}

\bibitem{taylan} D.~Taylan,
\newblock{\em  Matching Trees for Simplicial Complexes and Homotopy Type of Devoid Complexes of Graphs,}
\newblock{Order 33, 459–476 (2016).}


\end{thebibliography}
\end{document}